%% file: Paper3_S.tex
\documentclass[reqno]{amsart}
\include{packages}
\bibliography{sources}
\usetikzlibrary{decorations.pathreplacing,decorations.markings,calc}
\usetikzlibrary{arrows, matrix,decorations.pathmorphing, decorations.text, math,calc, hobby, cd, positioning, decorations.markings,arrows.meta, patterns, shapes.geometric}
\usetikzlibrary{decorations.markings, fpu, arrows.meta,decorations.pathmorphing,decorations.markings}
\usepackage{wasysym}

\tikzset{White/.style={
		draw=white, 
		postaction={decorate}, 
		decoration={markings,mark=at position 0.5 with {\arrow[black]{triangle 45} } }
	}
}
\tikzset{Snake/.style={decorate,decoration={snake}}}

\DeclareMathAlphabet{\mathpzc}{OT1}{pzc}{m}{it}

\DeclareFieldFormat[article]{title}{{\it #1}}
\DeclareFieldFormat{journaltitle}{{\rm #1}}
\renewbibmacro{in:}{%
	\ifentrytype{article}{}{\printtext{\bibstring{in}\intitlepunct}}}
\DeclareFieldFormat[incollection]{title}{{\it #1}}
\DeclareFieldFormat{journaltitle}{{\rm #1}}

\usepackage{tabto}
\usepackage{imakeidx}

\begin{document}
	
	\def\subsectionautorefname{Section}
	\def\subsubsectionautorefname{Section}
	\def\sectionautorefname{Section}
	\def\equationautorefname~#1\null{(#1)\null}
	\input{macros}

	
	\title[Fundamental Groups via Exchange Graphs]%
	{Fundamental Groups of Genus-$0$ Quadratic Differential Strata via Exchange Graphs}
	
	\author{Jeong-Hoon So}
	%
	
	\begin{abstract}
		We investigate how exchange-graph techniques can be used to study the topology of strata of meromorphic quadratic differentials. The exchange graph provides natural generators for the fundamental group. By extending the combinatorics of triangulations to weighted mixed-angulations, we generalise the familiar relations arising in the simple-zero case and introduce an additional relation that appears only around higher-order zeroes. In the genus-zero case with four singularities, we show that these relations suffice to give explicit presentations of the fundamental group.
		
	\end{abstract}
	\maketitle

	\tableofcontents
	
	\input{sec_braid}

	\input{sec_eg}
	
	\input{sec_quad}

	\input{sec_ex}

	\par
	\printbibliography
	
\end{document}

%% file: macros.tex
\newcommand{\mynewtheorem}[4]{
	\if\relax\detokenize{#3}\relax 
	\if\relax\detokenize{#4}\relax 
	\newtheorem{#1}{#2}
	\else
	\newtheorem{#1}{#2}[#4]
	\fi
	\else
	\newaliascnt{#1}{#3}
	\newtheorem{#1}[#1]{#2}
	\aliascntresetthe{#1}
	\fi
	\expandafter\def\csname #1autorefname\endcsname{#2}
}

\mynewtheorem{theorem}{Theorem}{}{section}
\mynewtheorem{lemma}{Lemma}{theorem}{}
\mynewtheorem{lem}{Lemma}{theorem}{}
\mynewtheorem{rem}{Remark}{lemma}{}
\mynewtheorem{prop}{Proposition}{lemma}{}
\mynewtheorem{cor}{Corollary}{lemma}{}
\mynewtheorem{definition}{Definition}{lemma}{}
\mynewtheorem{question}{Question}{lemma}{}
\mynewtheorem{assumption}{Assumption}{lemma}{}
\mynewtheorem{example}{Example}{lemma}{}
\mynewtheorem{exm}{Example}{lemma}{}
\mynewtheorem{rmk}{Remark}{lemma}{}
\mynewtheorem{pb}{Problem}{lemma}{}
\newtheorem*{rmk*}{Remark}
\newtheorem*{pb*}{Problem}

\newtheorem{conjecture}[theorem]{Conjecture}
\newtheorem{condition}[theorem]{Condition}
\newtheorem{setup}[theorem]{Setup}

\def\defbb#1{\expandafter\def\csname b#1\endcsname{\mathbb{#1}}}
\def\defcal#1{\expandafter\def\csname c#1\endcsname{\mathcal{#1}}}
\def\deffrak#1{\expandafter\def\csname frak#1\endcsname{\mathfrak{#1}}}
\def\defop#1{\expandafter\def\csname#1\endcsname{\operatorname{#1}}}
\def\defbf#1{\expandafter\def\csname b#1\endcsname{\mathbf{#1}}}

\makeatletter
\def\defcals#1{\@defcals#1\@nil}
\def\@defcals#1{\ifx#1\@nil\else\defcal{#1}\expandafter\@defcals\fi}
\def\deffraks#1{\@deffraks#1\@nil}
\def\@deffraks#1{\ifx#1\@nil\else\deffrak{#1}\expandafter\@deffraks\fi}
\def\defbbs#1{\@defbbs#1\@nil}
\def\@defbbs#1{\ifx#1\@nil\else\defbb{#1}\expandafter\@defbbs\fi}
\def\defbfs#1{\@defbfs#1\@nil}
\def\@defbfs#1{\ifx#1\@nil\else\defbf{#1}\expandafter\@defbfs\fi}
\def\defops#1{\@defops#1,\@nil}
\def\@defops#1,#2\@nil{\if\relax#1\relax\else\defop{#1}\fi\if\relax#2\relax\else\expandafter\@defops#2\@nil\fi}
\makeatother

\defbbs{ZHQCNPALRVWS}
\defcals{ABCDOPQMNXYLTRAEHZKCIV}
\deffraks{apijklgmnopqueRCc}
\defops{IVC, PGL,SL,mod,Spec,Re,Gal,Tr,End,GL,Hom,PSL,H,div,Aut,rk,Mod,R,T,Tr,Mat,Vol,MV,Res,Hur, vol,Z,diag,Hyp,hyp,hl,ord,Im,ev,U,dev,c,CH,fin,pr,Pic,lcm,ch,td,LG,id,Sym,Aut,Log,tw,irr,discrep,BN,NF,NC,age,hor,lev,ram,NH,av,app,Quad,Stab,Per,Nil,Ker,EG,CEG,PB,Conf,MCG,Diff,Pol,TB,ad,Homeo,Co,Rel,B,F}
\defbfs{uvzwp} 

\newcommand*{\Pure}{\operatorname{P}}

\def\ep{\varepsilon}
\def\ve{\varepsilon}
\def\abs#1{\lvert#1\rvert}
\def\dd{\mathrm{d}}
\def\WP{\mathrm{WP}}
\def\inj{\hookrightarrow}
\def\eq{=}

\def\i{\mathrm{i}}
\def\e{\mathrm{e}}
\def\st{\mathrm{st}}
\def\ct{\mathrm{ct}}
\def\rel{\mathrm{rel}}
\def\odd{\mathrm{odd}}
\def\even{\mathrm{even}}

\def\uC{\underline{\bC}}
\def\ol{\overline}

\def\Vrel{\bV^{\mathrm{rel}}}
\def\Wrel{\bW^{\mathrm{rel}}}
\def\twolev{\mathrm{LG_1(B)}}

\def\be{\begin{equation}}   \def\ee{\end{equation}}     \def\bes{\begin{equation*}}    \def\ees{\end{equation*}}
\def\ba{\be\begin{aligned}} \def\ea{\end{aligned}\ee}   \def\bas{\bes\begin{aligned}}  \def\eas{\end{aligned}\ees}
\def\={\;=\;}  \def\+{\,+\,} \def\m{\,-\,}

\newcommand*{\proj}{\mathbb{P}}
\newcommand{\IVCst}[1][\mu]{{\mathcal{IVC}}({#1})}
\newcommand{\barmoduli}[1][g]{{\overline{\mathcal M}}_{#1}}
\newcommand{\moduli}[1][g]{{\mathcal M}_{#1}}
\newcommand{\omoduli}[1][g]{{\Omega\mathcal M}_{#1}}
\DeclareDocumentCommand{\qmoduli}{ O{g} O{}}{{\mathrm{Quad}^{#2}_{#1}}}

\newcommand{\modulin}[1][g,n]{{\mathcal M}_{#1}}
\newcommand{\omodulin}[1][g,n]{{\Omega\mathcal M}_{#1}}
\newcommand{\zomoduli}[1][]{{\mathcal H}_{#1}}
\newcommand{\barzomoduli}[1][]{{\overline{\mathcal H}_{#1}}}
\newcommand{\pomoduli}[1][g]{{\proj\Omega\mathcal M}_{#1}}
\newcommand{\pomodulin}[1][g,n]{{\proj\Omega\mathcal M}_{#1}}
\newcommand{\pobarmoduli}[1][g]{{\proj\Omega\overline{\mathcal M}}_{#1}}
\newcommand{\pobarmodulin}[1][g,n]{{\proj\Omega\overline{\mathcal M}}_{#1}}
\newcommand{\potmoduli}[1][g]{\proj\Omega\tilde{\mathcal{M}}_{#1}}
\newcommand{\obarmoduli}[1][g]{{\Omega\overline{\mathcal M}}_{#1}}
\newcommand{\qbarmoduli}[1][g]{{\Omega^2\overline{\mathcal M}}_{#1}}
\newcommand{\obarmodulio}[1][g]{{\Omega\overline{\mathcal M}}_{#1}^{0}}
\newcommand{\otmoduli}[1][g]{\Omega\tilde{\mathcal{M}}_{#1}}
\newcommand{\pom}[1][g]{\proj\Omega{\mathcal M}_{#1}}
\newcommand{\pobarm}[1][g]{\proj\Omega\overline{\mathcal M}_{#1}}
\newcommand{\pobarmn}[1][g,n]{\proj\Omega\overline{\mathcal M}_{#1}}
\newcommand{\princbound}{\partial\mathcal{H}}
\newcommand{\omoduliinc}[2][g,n]{{\Omega\mathcal M}_{#1}^{{\rm inc}}(#2)}
\newcommand{\obarmoduliinc}[2][g,n]{{\Omega\overline{\mathcal M}}_{#1}^{{\rm inc}}(#2)}
\newcommand{\pobarmoduliinc}[2][g,n]{{\proj\Omega\overline{\mathcal M}}_{#1}^{{\rm inc}}(#2)}
\newcommand{\otildemoduliinc}[2][g,n]{{\Omega\widetilde{\mathcal M}}_{#1}^{{\rm inc}}(#2)}
\newcommand{\potildemoduliinc}[2][g,n]{{\proj\Omega\widetilde{\mathcal M}}_{#1}^{{\rm inc}}(#2)}
\newcommand{\omoduliincp}[2][g,\lbrace n \rbrace]{{\Omega\mathcal M}_{#1}^{{\rm inc}}(#2)}
\newcommand{\obarmoduliincp}[2][g,\lbrace n \rbrace]{{\Omega\overline{\mathcal M}}_{#1}^{{\rm inc}}(#2)}
\newcommand{\obarmodulin}[1][g,n]{{\Omega\overline{\mathcal M}}_{#1}}
\newcommand{\LTH}[1][g,n]{{K \overline{\mathcal M}}_{#1}}
\newcommand{\PLS}[1][g,n]{{\bP\Xi \mathcal M}_{#1}}

\DeclareDocumentCommand{\LMS}{ O{\mu} O{g,n} O{}}{\Xi\overline{\mathcal{M}}^{#3}_{#2}(#1)}
\DeclareDocumentCommand{\Romod}{ O{\mu} O{g,n} O{}}{\Omega\mathcal{M}^{#3}_{#2}(#1)}

\newcommand*{\Tw}[1][\Lambda]{\mathrm{Tw}_{#1}}  
\newcommand*{\sTw}[1][\Lambda]{\mathrm{Tw}_{#1}^s}  

\newcommand{\HH}{{\mathbb H}}
\newcommand{\MM}{{\mathbb M}}
\newcommand{\bbC}{{\mathbb C}}
\newcommand{\TT}{{\mathbb T}}

\newcommand\PP{\mathbb P}
\renewcommand\R{\mathbb R}
\renewcommand\Z{\mathbb Z}
\newcommand\N{\mathbb N}
\newcommand\Q{\mathbb Q}
\renewcommand{\H}{\mathbb{H}}
\newcommand{\halfplane}{\mathbb{H}}
\newcommand{\chalfplane}{\overline{\mathbb{H}}}
\newcommand{\bk}{k}

\newcommand{\bfa}{{\bf a}}
\newcommand{\bfb}{{\bf b}}
\newcommand{\bfd}{{\bf d}}
\newcommand{\bfe}{{\bf e}}
\newcommand{\bff}{{\bf f}}
\newcommand{\bfg}{{\bf g}}
\newcommand{\bfh}{{\bf h}}
\newcommand{\bfm}{{\bf m}}
\newcommand{\bfn}{{\bf n}}
\newcommand{\bfp}{{\bf p}}
\newcommand{\bfq}{{\bf q}}
\newcommand{\bft}{{\bf t}}
\newcommand{\bfP}{{\bf P}}
\newcommand{\bfR}{{\bf R}}
\newcommand{\bfU}{{\bf U}}
\newcommand{\bfu}{{\bf u}}
\newcommand{\bfx}{{\bf x}}
\newcommand{\bfz}{{\bf z}}

\newcommand{\bfl}{{\boldsymbol{\ell}}}
\newcommand{\bfmu}{{\boldsymbol{\mu}}}
\newcommand{\bfeta}{{\boldsymbol{\eta}}}
\newcommand{\bftau}{{\boldsymbol{\tau}}}
\newcommand{\bfomega}{{\boldsymbol{\omega}}}
\newcommand{\bfsigma}{{\boldsymbol{\sigma}}}
\newcommand{\bfnu}{{\boldsymbol{\nu}}}
\newcommand{\bfrho}{{\boldsymbol{\rho}}}
\newcommand{\bfone}{{\boldsymbol{1}}}

\newcommand\cl{\mathcal}
\newcommand{\calH}{\mathcal{H}}

\newcommand{\calA}{\mathcal A}
\newcommand{\calK}{\mathcal K}
\newcommand{\calD}{\mathcal D}
\newcommand{\calC}{\mathcal C}
\newcommand\C{\mathcal C}
\newcommand{\calT}{\mathcal T}
\newcommand{\calB}{\mathcal B}
\newcommand{\calF}{\mathcal F}
\newcommand{\calV}{\mathcal V}
\newcommand{\calQ}{\mathcal Q}
\newcommand{\calX}{\mathcal X}
\newcommand{\calY}{\mathcal Y}
\newcommand{\calP}{\mathcal P}
\newcommand{\calJ}{\mathcal J}
\newcommand{\calS}{\mathcal S}
\newcommand{\calZ}{\mathcal Z}

\newcommand\pvd{\operatorname{pvd}}
\newcommand\per{\operatorname{per}}
\newcommand\thick{\operatorname{thick}}

\newcommand\perf{\operatorname{perf}}
\newcommand{\modules}{\operatorname{mod}}
\newcommand{\Loc}{\operatorname{Loc}}
\renewcommand{\Mod}{\operatorname{Mod}}
\renewcommand{\Hom}{\operatorname{Hom}}
\newcommand{\Ext}{\operatorname{Ext}}
\newcommand{\coker}{\operatorname{coker}}

\newcommand\Rep{\operatorname{Rep}}
\newcommand\ext{\operatorname{ext}}
\newcommand{\heart}{\heartsuit}

\newcommand{\sph}{\operatorname{sph}}
\newcommand{\Br}{\operatorname{Br}}
\renewcommand{\Tw}{\operatorname{Tw}}
\newcommand{\RHom}{\operatorname{RHom}}

\newcommand{\torsion}{\mathcal{T}}
\newcommand{\torsionfree}{\mathcal{F}}
\newcommand{\tstr}{\mathcal{L}}

\newcommand\cy{\mathrm{CY}}
\newcommand\CY{\mathrm{CY}}

\newcommand\DQI{{\mathcal{D}_{Q_I}}}
\newcommand\AQ{{\mathcal{A}_Q}}
\newcommand\AQI{{\mathcal{A}_{Q_I}}}
\newcommand{\DQ}{{\mathcal{D}_{Q}}}
\DeclareDocumentCommand{\DQQ}{ O{}O{}}{{\mathcal{D}^{#1}_{#2}}}
\DeclareDocumentCommand{\AQQ}{ O{q} }{{\mathcal{A}_{#1}}}

\newcommand{\sgn}{\operatorname{sgn}}
\renewcommand{\GL}{\operatorname{GL}}
\renewcommand{\rk}{\operatorname{rank}}
\newcommand{\opL}{\operatorname{L}}

\newcommand\stab{\operatorname{Stab}}
\newcommand\gstab{\operatorname{GStab}}

\newcommand{\spn}{\operatorname{span}}
\newcommand{\GStab}{\operatorname{GStab}}
\newcommand{\PGStab}{\bP\mathrm{GStab}}
\newcommand{\MStab}{\operatorname{MStab}}
\newcommand{\PMStab}{\bP\mathrm{MStab}}
\newcommand{\Tilt}{\operatorname{Tilt}}

\def\oQ{\overline{Q}}
\def\bY{\mathbf{Y}}
\def\bX{\mathbf{X}}
\newcommand{\fmu}{\mu^\sharp}
\newcommand{\bmu}{\mu^\flat}
\newcommand{\Cone}{\operatorname{Cone}}
\newcommand\add{\operatorname{Add}}
\newcommand\Irr{\operatorname{Irr}}

\newcommand{\EGs}{\EG^{s}} 
\newcommand{\SEG}{\operatorname{SEG}} 
\newcommand{\pSEG}{\operatorname{pSEG}} 
\newcommand{\EGp}{\EG^\circ}       
\newcommand{\SEGp}{\SEG^\circ}       
\newcommand{\EGb}{\EG^\bullet}       
\newcommand{\SEGb}{\SEG^\bullet}       
\newcommand{\SEGV}{\SEG_{{^\perp}\calV}} 
\newcommand{\pSEGV}{\pSEG_{{^\perp}\calV}}
\newcommand{\SEGVb}{\SEGb_{{^\perp}\calV}} 
\newcommand{\pSEGVb}{\pSEG^\bullet_{{^\perp}\calV}}

\newcommand{\EGT}{\EG^\circ}
\newcommand{\uEG}{\underline{\EG}} 
\newcommand{\uCEG}{\underline{\CEG}} 

\newcommand{\CA}{\operatorname{CA}}
\newcommand{\OA}{\operatorname{OA}}
\newcommand{\wOA}{\widetilde{\OA}}
\newcommand{\wCA}{\widetilde{\CA}}
\newcommand{\BT}{\operatorname{BT}}
\newcommand{\SBr}{\operatorname{SBr}}
\newcommand\Bt[1]{\operatorname{B}_{#1}}
\newcommand\bt[1]{\operatorname{B}_{#1}^{-1}}

\newcommand{\Sim}{\operatorname{Sim}}
\def\dual{\iota}
\def\ivc{\iota_v}
\newcommand\ind{\operatorname{index}}
\newcommand{\Qgrad}{\overline{Q}}
\newcommand\diff{\operatorname{d}}
\def\Dsow{\Dcol}
\def\pvc{e \Gamma e}
\def\Qvc{Q_V^c}
\def\Gwt{\FGamma_\wt}
\def\wX{\widetilde{X}}

\def\DD{\mathbf{D}}
\def\uD{\underline{\calD}}
\def\uh{\underline{\calH}}
\def\uZ{\underline{Z}}
\def\us{\underline{\sigma}}
\newcommand{\isom}{\cong}

\newcommand{\tilt}[3]{{#1}^{#2}_{#3}}
\newcommand\Stap{\Stab^\circ} 
\newcommand\Stas{\Stab^\bullet}
\newcommand{\cub}{\operatorname{U}} 
\newcommand{\skel}{\wp} 

\newcommand{\mai}{\mathbf{i}} 

\newcommand\wT{\widetilde{\TT}} 
\newcommand{\Tri}{\Delta}
\newcommand{\deco}{\Delta}

\def\w{\mathbf{w}}
\def\wtpt{\w_{+2}}
\newcommand\surf{\mathbf{S}}  
\newcommand\surfo{{\mathbf{S}}_\Tri}  
\newcommand\surfw{\surf^\w}  
\newcommand\sow{\surf_\w}  
\newcommand\subsur{\Sigma}  
\newcommand\colsur{\overline{\surf}_\w}  
\def\Dsan{\calD_3(\surfo)}
\def\Dsow{\calD(\sow)}
\def\Dsub{\calD_3(\subsur)}
\def\Dcol{\calD(\colsur)}

\def\ww{node[white]{$\bullet$}node[red]{$\circ$}}
\def\nn{node{$\bullet$}}

\newcommand{\uk}{\mathbf{k}}
\def\sing{\operatorname{Sing}}

\newcommand\jiantou{edge[->]}
\newcommand\AS{\mathbb{A}}

\def\grad{\lambda}
\def\gms{\surf^\grad}
\def\gmsw{\surf^{\grad,\wt}_{\Tri}}
\def\iT{\TT_0}

\def\weta{\widetilde{\eta}}
\def\wzeta{\widetilde{\zeta}}
\def\walpha{\widetilde{\alpha}}
\def\wbeta{\widetilde{\beta}}
\def\wgamma{\widetilde{\gamma}}

\def\dsan{\calD_3(\surfo)}
\def\psan{\per(\surfo)}
\def\dsow{\Dsow}
\def\psow{\per(\colsur)}

\def\RP{\operatorname{RP}}
\def\Rs{\operatorname{RS}}
\newcommand{\ST}{\operatorname{ST}}  
\newcommand{\STp}{\ST^\circ}  

\newcommand{\Int}{\operatorname{Int}}

\def\hori{_{\operatorname{H}}}

\newcommand{\Note}[1]{\textcolor{red}{#1}}
\newcommand{\note}[1]{\textcolor{Emerald}{#1}}
\newcommand{\qy}[1]{\textcolor{cyan}{#1}}

\newcommand\foli[5]{
	\foreach \m in {0,#1,...,#5}{
		\coordinate (#2#4) at ($(#2)!.5!(#4)$);
		\draw[Emerald!50]plot [smooth,tension=.5] coordinates
		{(#2) ($(#2#4)!\m!(#3)$) (#4)};
		\draw[thick,gray](#2)to(#3)to(#4);
}}

\def\iA{\AS_0}
\def\iA{\AS_0}

\newcommand\Ind{\operatorname{Ind}}

\newcommand\xx{\mathbf{X}} 
\newcommand\surp{\xx^\circ}
\newcommand{\Zer}{\operatorname{Zero}}
\newcommand{\Crit}{\operatorname{Crit}}
\newcommand{\FQuad}{\operatorname{FQuad}}
\newcommand{\FQuab}{\FQuad^{\bullet}}

\newcommand{\Imgy}{\operatorname{Im}}
\newcommand{\numarc}{n}


\newcommand{\Exch}{\operatorname{Exch}}
\newcommand{\arrowIn}{
	\tikz \draw[-stealth] (-1pt,0) -- (1pt,0);
}


\newcommand{\wh}{\widehat}
\newcommand{\wt}{\widetilde}

\newcommand{\whmu}{\widehat{\mu}}
\newcommand{\whrho}{\widehat{\rho}}
\newcommand{\whLa}{\widehat{\Lambda}}

\newcommand{\ps}{\mathrm{ps}}

\newcommand{\tdpm}[1][{\Gamma}]{\mathfrak{W}_{\operatorname{pm}}(#1)}
\newcommand{\tdps}[1][{\Gamma}]{\mathfrak{W}_{\operatorname{ps}}(#1)}
\newcommand{\cal}[1]{\mathcal{#1}}

\newlength{\halfbls}\setlength{\halfbls}{.8\baselineskip}

\newcommand*{\Teichmuller}{Teich\-m\"uller\xspace}

\DeclareDocumentCommand{\MSfun}{ O{\mu} }{\mathbf{MS}({#1})}
\DeclareDocumentCommand{\MSgrp}{ O{\mu} }{\mathcal{MS}({#1})}
\DeclareDocumentCommand{\MScoarse}{ O{\mu} }{\mathrm{MS}({#1})}
\DeclareDocumentCommand{\tMScoarse}{ O{\mu} }{\widetilde{\mathbb{P}\mathrm{MS}}({#1})}

\newcommand{\kmin}{\kappa_{(2g-2)}}
\newcommand{\ktop}{\kappa_{\mu_\Gamma^{\top}}}
\newcommand{\kbot}{\kappa_{\mu_\Gamma^{\bot}}}

\newcommand\bra{\langle}
\newcommand\ket{\rangle}
\newcommand{\del}{\partial}
\newcommand{\deld}[1]{\frac{\del}{\del {#1} }}
\newcommand{\frap}{\frac{1}{2\pi\ii}}
\renewcommand{\Re}{\operatorname{Re}}
\renewcommand{\Im}{\operatorname{Im}}
\newcommand{\Rp}{\mathbb{R}_{>0}}
\newcommand{\Rm}{\R_{<0}}

\def\lift{\mathrm{lift}}
\def\ul{\underline}
\newcommand\<{\langle}
\renewcommand\>{\rangle}
\def\h{\calH}
\def\D{\calD}
\def\aut{\mathpzc{Aut}}
\tikzcdset{arrow style=tikz, diagrams={>=stealth}}
\usetikzlibrary{arrows}

\def\acf{\mathbf{k}}
\def\PTS{\mathbb{P}T\surf}
\newcommand\coho[1]{\operatorname{H}^{#1}}
\def\MTS{\mathbb{R}T\surf^{\lambda}}

\def\ora{\overrightarrow}
\def\harc{\ora{\gamma_h}}
\def\varc{\ora{\gamma_v}}
\def\cube{\mathrm{U}}

\newcommand{\on}[1]{\operatorname{#1}}
\newcommand{\iv}[1]{(#1)^{-1}}

\setcounter{tocdepth}{1}
\setcounter{secnumdepth}{3}

%% file: sec_braid.tex
\section{Introduction}

Understanding the topology of strata of quadratic differentials is a central problem in Teichmüller theory. Yet, despite their importance, even basic topological invariants---such as their fundamental groups---are difficult to compute directly.
A combinatorial approach, initiated by \cite{BS15} and expanded in \cite{KQ2}, replaces the analytic complexity of quadratic differentials with a combinatorial model built from triangulations and flips. In this setting, the \emph{exchange graph} naturally provides generators for the fundamental group of a stratum. The main problem, and the focus of this paper, is to determine the \emph{relations} among these generators.

\medskip

First, consider a stratum of meromorphic quadratic differentials on a Riemann surface whose singularities consist only of simple zeroes and poles of order at least three. After performing the real oriented blow-up at each pole, the surface becomes a marked surface with boundary components corresponding to the poles. For any saddle-free differential in the stratum, the horizontal foliation determines a canonical triangulation (see Subsection~\ref{subsection:difftoeg}), and any two such triangulations differ by sequences of flips.

In the work of \cite{BS15}, these triangulations appear as chambers in a natural wall-and-chamber decomposition of the stratum, and flips correspond to crossing codimension-one walls. This yields a surjection
\[
\pi_{1}(\text{exchange graph of triangulations})\;\twoheadrightarrow\;\pi_{1}(\text{stratum}),
\]
showing that the exchange graph provides a full set of generators for the fundamental group.

On the combinatorial side, the exchange graph contains three canonical types of loops, which we will refer to as \emph{relations}, since these are the loops we later quotient out. Each relation is determined by the relative position of two arcs (edges) in a triangulation:
\begin{itemize}
	\item the \emph{square relation}, determined by a pair of arcs that do not intersect inside any triangle;
	\item the \emph{pentagon relation}, determined by a pair of arcs that intersect once in a single triangle;
	\item the \emph{hexagon relation}, determined by a pair of arcs that intersect once in each of two triangles.
\end{itemize}
The images of the loops in the framed stratum are contractible. This was proven in \cite{KQ2}, where it is also shown that they already generate the full kernel.  
We give a reformulation of the arguments using only triangulations and quadratic differentials, avoiding cluster-theoretic language:

\begin{prop}[Proposition~\ref{theorem:quadeg}]
	There is an isomorphism
	\[
	\pi_1\EG^\star(\mathbf{S})/\Rel(\mathbf{S}) \;\simeq\; \pi_1\Quad(\mathbf{S})
	\]
	of orbifold fundamental groups.
\end{prop}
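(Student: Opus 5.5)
We will prove the isomorphism $\pi_1\EG^\star(\mathbf{S})/\Rel(\mathbf{S}) \simeq \pi_1\Quad(\mathbf{S})$ by realising the exchange graph as the dual graph of a cell-like stratification of $\Quad(\mathbf{S})$ and applying a van Kampen argument over its low-codimension strata.

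\textbf{Step 1: the stratification.} Following \cite{BS15}, we stratify $\Quad(\mathbf{S})$ by the number of horizontal saddle connections.
\begin{itemize}
\item The saddle-free locus $B_0$ is open and dense. Its connected components are indexed by the vertices of $\EG^\star(\mathbf{S})$, namely the (framed) triangulations obtained from the horizontal foliation as in Subsection~\ref{subsection:difftoeg}.
\item Each such component is contractible. It is identified, via periods of the standard saddle connections, with a product of upper half-planes $\mathbb{H}^n$, taken modulo the finite automorphism group in the orbifold setting.
\item The locus $B_1$ of differentials with exactly one horizontal saddle connection is a real-codimension-one wall. Crossing it realises a flip, so the edges of $\EG^\star(\mathbf{S})$ correspond to components of $B_1$. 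Each of these is again contractible, a product of $\mathbb{H}^{n-1}$ with an interval.
\end{itemize}
It follows that the dual graph of $B_0\cup B_1$ is $\EG^\star(\mathbf{S})$, and that the inclusion of the graph into the open set $U=B_0\cup B_1$ induces an isomorphism $\pi_1\EG^\star(\mathbf{S})\simeq\pi_1 U$ of orbifold fundamental groups.

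\textbf{Step 2: surjectivity.} The complement $B_{\geq 2}$ of $U$ has real codimension at least $2$. Since $\Quad(\mathbf{S})$ is a complex orbifold, removing a closed subset of real codimension $\geq 2$ does not disconnect it, and $\pi_1 U\to\pi_1\Quad(\mathbf{S})$ is surjective.

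\textbf{Step 3: the kernel.} By general position, the kernel is normally generated by small loops linking the codimension-two strata $B_2$. Points of $B_2$ are differentials with exactly two horizontal saddle connections. Locally near such a point the space is a product of a smooth contractible factor with a neighbourhood $\mathbb{C}$ or $\mathbb{R}^2$ of the origin. This plane is cut by the walls of the hyperplane-like arrangement determined by the two (or more) saddle connections whose periods become horizontal. We classify the local configurations by the relative position of the two arcs dual to the saddle connections:
\begin{itemize}
\item disjoint inside every triangle, giving four chambers (square);
\item sharing one triangle, giving five chambers (pentagon);
\item sharing two triangles, giving six chambers (hexagon).
\end{itemize}
In each case we compute the chambers of the local wall arrangement in the plane spanned by the two periods and check that the boundary loop reads the corresponding relation in $\Rel(\mathbf{S})$. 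This shows the kernel is exactly the normal closure of $\Rel(\mathbf{S})$; conversely, each relation bounds a disc in a neighbourhood of a $B_2$ point and so dies.

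\textbf{Main obstacle.} The hard step is the local analysis near $B_2$: showing that two simultaneous horizontal saddle connections can only meet in these three configurations, and that the local picture really is a disc divided into $4$, $5$ or $6$ sectors.
\begin{itemize}
\item The case where two saddle connections share an endpoint at a simple zero needs care: the period map near that point must still be a local homeomorphism.
\item A further subtlety is the possibility of $B_2$ points where the foliation has a horizontal closed trajectory or ring domain rather than isolated saddle connections. We would first show that such loci lie in codimension $\geq 2$ but do not generate new relations, for instance by showing they have codimension $\geq 3$ in the relevant finite-area situations. Since all poles have order at least three, ring domains are the real issue.
\item Orbifold points need checking: the automorphisms must act compatibly on chambers so that the identification passes to orbifold $\pi_1$.
\end{itemize}
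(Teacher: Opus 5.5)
\textbf{The gap is in Step~3, and in the codimension count behind Step~2: ring domains cannot be pushed into high codimension.} A horizontal ring domain is cut out by the single real condition $\operatorname{Im} Z(\delta)=0$ on the period of its core curve. Cylinders persist under small deformation, so differentials with a horizontal ring domain form real \emph{codimension-one} walls. Such a differential has at least two horizontal saddle trajectories (the boundary of the cylinder), so these walls lie outside $U=B_0\cup B_1$, and infinitely many flip walls accumulate onto them from both sides.

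This already happens for $\Quad(1,1,-3,-3)$, the annulus with one marked point on each boundary component. There the two arcs of a triangulation share both triangles with the \emph{same} orientation (the Kronecker configuration). At a codimension-two point on the edge of a ring-domain wall, both dual saddles are horizontal. The normal slice there is an infinite fan of chambers accumulating on the ring-domain ray, not a disc cut into six sectors.

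Consequences for your argument:
\begin{itemize}
\item Your local model for the two-triangle case is wrong.
\item The complement of $U$ is not of codimension $\ge 2$.
\item The principle that the kernel is normally generated by small loops around codimension-two strata has no valid local model at these points.
\end{itemize}
Surjectivity can be rescued by pushing paths off these walls across their codimension-two edges, but the kernel computation cannot. The paper's own disc for the hexagon in Proposition~\ref{prop:RelTrivial} has to be routed past exactly such infinite wall-crossings. To finish your route you would need a genuinely new argument showing that loops circumventing ring-domain walls are products of squares, pentagons and hexagons. Your proposal does not supply this; it instead asserts the false codimension bound.

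The paper never classifies codimension-two degenerations. Geometry enters only through two ingredients: surjectivity of the skeleton map, and the easy inclusion that each relation loop bounds an explicit disc built from rotation strips. The kernel is then identified globally. Theorem~\ref{thm:EGdecsimpl-conn} proves $\pi_1\EG^\TT(\mathbf S_\Delta)/\Rel(\mathbf S_\Delta)=1$ algebraically, via the extension by the braid twist group and its finite presentation checked on local twists. This forces $\pi_1\FQuad^\TT(\mathbf S_\Delta)=1$. Comparing the two extensions by $\MCG(\mathbf S_\Delta)$, one from the covering $\FQuad^\TT(\mathbf S_\Delta)\to\Quad(\mathbf S)$ and one from $\EG^\TT(\mathbf S_\Delta)\to\EG^\star(\mathbf S)$, identifies the kernel with $\pi_1\EG^\TT(\mathbf S_\Delta)$, which is normally generated by the relations.

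A completed local argument of your kind would have real advantages. It would avoid the braid-group presentation, including relation $7^\circ$, which the paper leaves unverified. It might also reach higher-order zeroes, where the paper only shows that $\Rel$ lies in the kernel. As written, however, the step carrying all the weight is missing.
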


\medskip

The goal of the present paper is to extend this picture to strata with \emph{higher-order zeroes}. 
A zero of order $k$ has $k+2$ outgoing separatrices, so triangulations must be replaced by \emph{mixed-angulations} of \cite{BMQS}, such that a zero of order $k$ is represented by a $(k{+}2)$-gon. Flips generalize accordingly, yielding an exchange graph that again provides natural generators for the fundamental group of the stratum.

The classical square, pentagon, and hexagon relations generalize directly to mixed-angulations. We provide a \emph{second type of hexagon} that only appears around higher-order zeroes, determined by a pair of arcs that intersect twice inside the same polygon (see Subsection~\ref{subsec:eg_mixed}).
A result of \cite{BMQS} shows that the chamber map induces a surjection
\[
\pi_{1}(\text{exchange graph of mixed-angulations})
\;\twoheadrightarrow\;
\pi_{1}(\text{stratum}),
\]
so the exchange graph always supplies the generators.
We prove in Proposition~\ref{prop:relationsinkernel_gen} that all the before-mentioned relations---the square, pentagon, and both types of hexagons---lie in the kernel.

What remains open, in general, is whether these relations generate the \emph{entire} kernel.
As a first test, we analyse the simplest non-trivial family: genus~$0$ with four singularities.
In this situation, the fundamental group is essentially known by classical arguments, so our explicit computation provides a non-trivial cross-check of the exchange-graph method.
We show that, for every choice of zero orders and pole orders \(>2\), the
generalized relations already account for all relations among the exchange–graph generators:

\begin{theorem}[Theorem~\ref{thm:isomorphism-four-singularity}]
	For $g=0$ and four singularities we have a natural isomorphism
	\[
	\pi_1\EG^\star(\sow)/\Rel^\star(\sow)
	\;\cong\;
	\pi_1\Quad(\sow)
	\]
	of orbifold fundamental groups.
	Moreover, the resulting group can be read off from the symmetry type:
	\begin{itemize}
		\item[(a)] $\mathbb{Z} \times F_2$, if all four singularities have distinct orders;
		\item[(b)] $\langle z,c,d \mid [z,c],[z,d],\ c^2=1\rangle$, if two singularities have identical order, and the remaining two have even order not coinciding with the first two;
		\item[(c)] $\langle c,d \mid [c^2,d]\rangle$, if two singularities have identical order, and the remaining two have odd order not coinciding with the first two;
		\item[(d)] $\langle s,d \mid [d^3,s], s^2 \rangle$, if there are three zeroes of identical even order;
		\item[(e)] $B_3 = \langle s,d \mid d^3 = s^2 \rangle$, if there are three zeroes of identical odd order.
	\end{itemize}
\end{theorem}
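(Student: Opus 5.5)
Since Proposition~\ref{prop:relationsinkernel_gen} places every relation of $\Rel^\star(\sow)$ in the kernel, and the chamber map is surjective by \cite{BMQS}, we already have a surjection
\[
\Phi\colon \pi_1\EG^\star(\sow)/\Rel^\star(\sow)\twoheadrightarrow \pi_1\Quad(\sow).
\]
The plan is to compute both sides independently and show they are the same group, with $\Phi$ matching generators. Because groups of this kind (for instance $\mathbb{Z}\times F_2$ and $B_3$) are Hopfian, any surjection between isomorphic copies is an isomorphism. It is cleaner, though, to build an inverse on explicit generators.

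\emph{Analytic side.} For $g=0$ with four singularities, $\Quad(\sow)$ is, up to the $\mathbb{C}^*$-scaling, the configuration space of four labelled-by-order points on $\mathbb{P}^1$. The quadratic differential is determined by the positions and a global scalar, and the residue obstruction is absent since all poles have order $>2$. So $\Quad(\sow)$ fibres over $\mathcal{M}_{0,4}/G$ with fibre $\mathbb{C}^*$, where $G\subset S_4$ is the group of order-preserving permutations of the singularities. The fibre contributes a central generator $z$, except that its interaction with the stabiliser of a permutation twists it. This twist produces the relation $[c^2,d]$ in case (c) instead of $c^2=1$ with $z$ central, and it identifies $z$ with a power of $d$ or $s$ in cases (d), (e). The parity of the orders governs whether rotating by a half-turn of the $\mathbb{C}^*$ factor fixes the differential, since $q\mapsto -q$ is realised by a Möbius automorphism exactly when the exponents allow a square root. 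Computing $\pi_1^{\mathrm{orb}}$ of $\mathcal{M}_{0,4}/G=(\mathbb{P}^1\setminus\{0,1,\infty\})/G$ gives:
\begin{itemize}
\item[(a)] $F_2$ when $G$ is trivial;
\item[(b),(c)] $\mathbb{Z}/2 * \mathbb{Z}$ when $G=\mathbb{Z}/2$;
\item[(d),(e)] a quotient related to $\mathrm{PSL}_2(\mathbb{Z})$ when $G=S_3$.
\end{itemize}
The orbifold long exact sequence then yields the listed presentations (a)--(e).

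\emph{Combinatorial side.} Classify the mixed-angulations of $\sow$ up to the mapping class group action. With four singularities there are only a few arcs of the cut, so $\EG^\star(\sow)$ modulo the relevant symmetries is a small finite graph. Choose a spanning tree to obtain generators of $\pi_1$. Then quotient by the square, pentagon and both hexagon relations, using Tietze moves to reduce to the presentations (a)--(e). I would organise the generators as follows. The flip-cycles rotating a single polygon around a zero of order $k$, that is $k+2$ flips returning to a rotated mixed-angulation, give the $\mathbb{C}^*$ generator $z$, or a root of it when there is symmetry. Loops exchanging two equal-order singularities give $c$ or $s$, and the remaining loop gives $d$. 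Finally, check that $\Phi$ sends these generators to the analytic ones, so the map is the natural one.

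\emph{Main obstacle.} The hardest step is the combinatorial reduction: showing the listed relations suffice, in particular that the second-type hexagon relation forces the exact commutation $[z,d]$ or $[d^3,s]$. The risk is that only a finite-index version, such as some power, holds, which would give an infinite kernel. I would first handle case (a), where no symmetry is present, carefully by drawing all mixed-angulations. Cases (b)--(e) I would obtain by passing to the orbifold quotient, tracking how the symmetry group acts on the loops already found.
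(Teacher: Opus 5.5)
Your outline has the same architecture as the paper's proof. It uses the surjection $\pi_1\EG^\star(\sow)/\Rel^\star(\sow)\twoheadrightarrow\pi_1\Quad(\sow)$, an explicit spanning-tree computation of the left side (exactly the case analysis the paper carries out for one and two poles), and reads the right side as a central $\Z$-extension coming from the $\mathbb{C}^\star$-action over $\PP\Quad(\sow)\cong\mathcal{M}_{0,4}/G$.

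Where you genuinely differ is the injectivity step. The paper divides both sides by the central rotation $z$. It then concludes that $\bar\Phi\colon\Gamma\to\pi_1\PP\Quad(\sow)$ is an isomorphism because exchange-graph generators go to ``standard'' generators. That gives surjectivity, but injectivity rests on the unverified claim that the images satisfy no further relations.

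Your Hopfian closure supplies this cleanly. Each of the five groups is a central $\Z$-extension of $F_2$, $\Z*\Z/2$ or $\PSL_2(\Z)$, hence virtually $\Z\times F_n$, hence residually finite and Hopfian (Mal'cev). So a surjection onto an abstractly isomorphic group is an isomorphism. The price is that you must determine $\pi_1\Quad(\sow)$ itself, including the extension class, and your local analysis does this:
\begin{itemize}
\item At the order-two point the involution satisfies $f^*q=\pm q$ according to the parity of the two non-permuted orders. This gives $c^2=1$ or $c^2=z$, and likewise for $s$.
\item At the order-three point $f^*q=\omega^{\pm1}q$, giving $d^3=z^{\pm1}$.
\end{itemize}
The paper only needs the classical groups $\pi_1^{\mathrm{orb}}\PP\Quad(\sow)$ and the matching of the two central $\Z$'s. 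Applying Hopficity to $\bar\Phi$ and then the five lemma would combine the economy of the one with the rigour of the other.

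Two caveats.

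First, your heuristic that $z$ is a $(k+2)$-flip cycle rotating one polygon is not right. Along $t\mapsto e^{2\pi i t}q$ each saddle connection becomes horizontal exactly once, and the prongs at an order-$k$ zero advance by only one of their $k+2$ steps. In the paper $z$ is one side of a square or pentagon relation. This is harmless for the Hopfian route, but it would give the wrong map if you built the explicit inverse on generators that you say you prefer.

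Second, your list of $G$ omits two pairs of equal orders $(i,i,-k,-k)$, which the statement places under (b), (c); the paper's list $\mathcal{M}_{0,4}$, $\mathcal{M}_{0,[2]+2}$, $\mathcal{M}_{0,[3]+1}$ omits it too. There $G\cong(\Z/2)^2$ contains the double transposition. It acts trivially on $\mathcal{M}_{0,4}$ and is realised by an automorphism of every differential in the stratum. So you must either pass to the effective quotient or carry a generic $\Z/2$ through the computation.
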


The weighted exchange graph always supplies generators for the fundamental group of any stratum, and we have provided an explicit set of relations among them.  
An interesting open problem is whether additional relations appear when the number
of singularities increases or when the surface has positive genus

\medskip

\noindent\textbf{Orbifold convention.}
Many unframed strata of quadratic differentials are orbifolds rather than manifolds, due to residual symmetries coming from permutations of zeroes of identical order.  
Throughout this paper, all fundamental groups are orbifold fundamental groups whenever the underlying stratum carries such stabilizers.  
This convention applies in particular to the explicit computations in Section~\ref{sec:examples}.

\medskip

\noindent\textbf{Open Problems and Future Directions.}
The results of this paper suggest several directions for further investigation. A natural next step is to extend the methods developed here to strata with a larger number of singularities or to higher genus surfaces. In the cases considered in this paper the exchange graphs are still small enough that their fundamental groups can be analysed by explicit computations. For strata with more singularities
the exchange graphs grow rapidly in size, with many more vertices and edges, making such direct computations increasingly difficult.

An interesting open question is whether the relations considered in this paper—namely the square, pentagon and both type of hexagon relations—already
generate all relations in general. It is conceivable that additional types of relations appear in more complicated configurations of flips, but it may also be that the known local relations suffice even in these larger exchange graphs.

\medskip

\noindent\textbf{Acknowledgments.}
I thank my supervisor Martin Möller for his guidance and support and gratefully acknowledge support by the Deutsche Forschungsgemeinschaft (DFG, German Research Foundation) through the Collaborative Research Centre TRR 326 Geometry and Arithmetic of Uniformized Structures, project number 444845124.

\section{Braid Groups and Surfaces}\label{sec:braids}

The topology of strata of quadratic differentials naturally leads to questions about how collections of points move on a surface and how the underlying surface may be deformed.
Both phenomena are encoded by classical objects: braid groups, configuration spaces, and mapping class groups.  
These groups appear later as explicit fundamental groups of low–complexity strata and as kernels or targets of natural forgetful maps between surface models.

In this section we assemble the background needed for these applications.  
We begin with braid groups and their pure versions, together with the corresponding surface braid groups obtained from configuration spaces on general surfaces.  
We then introduce the classes of surfaces that arise as topological models for quadratic differentials---marked, decorated, and weighted decorated surfaces.  
Finally, we review the mapping class groups acting on these surfaces and the role of the surface braid group as the kernel of the forgetful map from decorated to undecorated mapping class groups.

\subsection{Braid and Surface Braid Groups} 

We recall the classical definitions and standard presentations of the braid group and pure braid group before introducing their surface analogues.

\begin{definition}[\cite{birman2016braids}, Theorem 1.8]\label{def:B_n}
	The braid group $B_n$ on $n$ strands is generated by $\sigma_1,\dots,\sigma_{n-1}$ subject to the relations
	\begin{itemize}
		\item[(i)] $\sigma_i\sigma_j=\sigma_j\sigma_i$ if $|i-j|>1$, 
		\item[(ii)] $\sigma_i\sigma_{i+1}\sigma_i=\sigma_{i+1}\sigma_i\sigma_{i+1}$ for all $i$.
	\end{itemize}
\end{definition}

The braid group admits a geometric interpretation. Consider $n$ parallel, numbered strands. The generator $\sigma_i$ corresponds to twisting the $i$-th and $(i+1)$-th strands such that their lower ends are swapped, with the $i$-th strand passing over the $(i+1)$-th strand. There is a natural surjection $B_n\to S_n$ sending $\sigma_i$ to the transposition $(i,i+1)$, whose kernel is the \emph{pure braid group} $\PB_n$.

For $1\le i<j\le n$, set $$a_{ij} := (\sigma_{j-1}\cdots\sigma_{i+1})\sigma_i(\sigma_{i+1}^{-1}\cdots\sigma_{j-1}^{-1}), \qquad A_{ij}:=a_{ij}^2.$$

\begin{theorem}[\cite{lee2010positive}, Thm. 1.1,Rem. 3.1]\label{thm:purebraid}
	The pure braid group $\PB_n$ is generated by $$\{A_{ij}\mid1\le i<j\le n\}$$ subject to the relations:
	\begin{itemize}
		\item[(P1)] $A_{ij}A_{rs}=A_{rs}A_{ij}$ if $r<s<i<j$ or $i<r<s<j$,
		\item[(P2)] $A_{ji}A_{ir}A_{rj}=A_{ir}A_{rj}A_{ji}=A_{rj}A_{ji}A_{ir}$ if $r<i<j$,
		\item[(P3)] $A_{rs}(A_{jr}A_{ji}A_{js})=(A_{jr}A_{ji}A_{js})A_{rs}$ if $r<i<s<j$.
	\end{itemize}
	Moreover there is a short exact sequence $$0\to F_n \xrightarrow{g}\PB_{n+1}\xrightarrow{f}\PB_n\to0,$$ where $F_n=\langle A_{1(n+1)},\dots,A_{n(n+1)}\rangle$ is free. This sequence splits, so $$\PB_{n+1}\cong F_n\rtimes\PB_n.$$
\end{theorem}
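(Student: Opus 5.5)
The presentation of $\PB_n$ is classical and cited, so the plan is to derive it via the Fadell--Neuwirth fibration, proving the split short exact sequence first and then reading off the presentation by induction on $n$.

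Let $\Conf_n(\bC)$ denote the ordered configuration space, so that $\pi_1\Conf_n(\bC)=\PB_n$. Forgetting the last point gives a map
\[
\Conf_{n+1}(\bC)\to\Conf_n(\bC).
\]
This map is a locally trivial fibration, and its fibre is $\bC\setminus\{n \text{ points}\}$. The fibre has free fundamental group $F_n$. Concretely, $F_n$ is generated by the loops of the $(n+1)$-st point around each of the others, and these loops are exactly $A_{1(n+1)},\dots,A_{n(n+1)}$. Since $\pi_2\Conf_n(\bC)=0$ (inductively, the spaces are $K(\pi,1)$), the long exact homotopy sequence gives
\[
0\to F_n\xrightarrow{g}\PB_{n+1}\xrightarrow{f}\PB_n\to 0.
\]
For the splitting, I would construct a section of the fibration: add a point far away, for instance at $1+\sum_i|z_i|$. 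On $\pi_1$ this section corresponds to the braid-level map that adjoins a straight $(n+1)$-st strand. Hence the sequence splits and $\PB_{n+1}\cong F_n\rtimes\PB_n$.

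The presentation then follows by induction using the standard procedure for presenting a semidirect product. The generators are those of $F_n$ together with those of $\PB_n$. The relations are of three kinds: the relations of $\PB_n$ (available by induction), no relations within $F_n$ since it is free, and the conjugation relations $A_{ij}A_{k(n+1)}A_{ij}^{-1}=w_{ijk}$. Here $w_{ijk}$ is the word in $F_n$ describing the induced action of $A_{ij}$ on the punctured plane, given by a Dehn twist about a curve enclosing the punctures $i$ and $j$.

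The main obstacle is the last step. One must compute these actions explicitly and check that, with the index convention of \cite{lee2010positive} (note the reversed-index notation $A_{ji}$ there), they rewrite exactly into the forms (P1)--(P3). I would split into cases according to the relative position of $k$ and $\{i,j\}$:
\begin{itemize}
	\item $k<i$ or $k>j$: the twist does not affect the loop, which gives commutation, i.e.\ (P1);
	\item $k\in\{i,j\}$: the loop is conjugated by a product of two generators, which gives the cyclic relations (P2);
	\item $i<k<j$: the conjugation is by the triple product $A_{jr}A_{ji}A_{js}$, which gives (P3).
\end{itemize}
Verifying these conjugation formulas by drawing the curves, and matching them to the stated relation shapes, is the only substantive computation.
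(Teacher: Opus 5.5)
Your outline is correct. The paper gives no argument of its own for this statement; it imports it from Lee. So the comparison is with the cited result, not with an in-paper proof.

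What you sketch is the classical Fadell--Neuwirth/Artin combing argument. The fibration, asphericity, the section adding the point $1+\sum_i|z_i|$, and the standard presentation of a split extension together give the split exact sequence and Artin's presentation of $\PB_{n+1}$ by induction. This buys a self-contained proof of the second half of the statement and explains where (P1)--(P3) come from. The step you defer, trading Artin's conjugation relations for positive ones, is exactly the extra content the paper takes from Lee, so it does have to be carried out.

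When you carry it out, two of your descriptions need correcting. For $i<j\le n$ put $a=A_{ij}$, $x=A_{i(n+1)}$, $y=A_{j(n+1)}$.

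First, the (P2) relations for the triple $i<j<n+1$ read $yax=axy=xya$. These are equivalent to the pair
\[
a^{-1}ya=xyx^{-1},\qquad a^{-1}xa=(xy)\,x\,(xy)^{-1}.
\]
So only one of the two loops is conjugated by a product of two generators; the other is conjugated by a single generator.

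Second, for $i<k<j$ and $z=A_{k(n+1)}$, the action is not conjugation by the triple product. The formula you must find, and which (P2) and (P3) force, is
\[
a^{-1}za=[x,y]\,z\,[x,y]^{-1},\qquad [x,y]=xyx^{-1}y^{-1}.
\]
What (P3) actually says is that $a$ commutes with $xzy$. Since
\[
a^{-1}(xzy)a=(a^{-1}xa)(a^{-1}za)(a^{-1}ya),
\]
(P3) is equivalent to the commutator formula only modulo (P2) for the same pair $i<j$.

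So the rewriting is not a relation-by-relation match. For each fixed $A_{ij}$ it is a Tietze equivalence between two blocks: all of its conjugation relations on the generators of $F_n$, and all (P1)--(P3) relations with top index $n+1$ whose $\PB_n$-letter is $A_{ij}$. With that adjustment your plan goes through.
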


\emph{$n=2$.} Here $B_2=\langle\sigma_1\rangle\cong\Z$, and $\PB_2=\langle\sigma_1^2\rangle$ is the index-two subgroup of even braids.

\emph{$n=3$.} We have $$B_3=\langle a,b\mid aba=bab\rangle,$$ with $a=\sigma_1$, $b=\sigma_2$. The pure braid group is $$\PB_3=\langle A_{12},A_{23},A_{13}\mid A_{23}A_{12}A_{13}=A_{12}A_{13}A_{23}=A_{13}A_{23}A_{12}\rangle,$$ where $A_{12}=a^2$, $A_{23}=b^2$, and $A_{13}=ba^2b^{-1}$. Its central element $C=(ab)^3=(ba)^3$ commutes with all $A_{ij}$; so $\PB_3\cong\Z\times F_2$.

The braid group admits a natural generalisation to surfaces. We follow \cite{GP}. Let $S$ be a compact, connected, oriented surface of genus $g\ge0$ with $p\ge1$ boundary components. Let $$\F_n(S):=\{(p_1,\dots,p_n)\in S^n\mid p_i\ne p_j\text{ for }i\ne j\}$$
be the configuration space of $n$ ordered points on $S$.
The symmetric group $S_n$ acts freely on $\F_n(S)$ by coordinate permutation. The quotient $$\operatorname{D}_n(S):=\F_n(S)/S_n$$ is the configuration space of $n$ unordered points on $S$.

\begin{definition}[Fox-Neuwirth]\label{def:surfacebraidgroup}
	The \emph{surface braid group} on $S$ with $n$ strands is $$\B_n(S):=\pi_1(\operatorname{D}_n(S)),$$ and its pure version is $$\Pure_n(S):=\pi_1(\F_n(S)).$$
\end{definition}

\begin{rmk}
	For the disc $S=D^2$, these coincide with the classical groups: $\B_n(D^2)=B_n$ and $\Pure_n(D^2)=\PB_n$.
\end{rmk}

\begin{lemma}\label{lem:PBsplit}
	There is an isomorphism $$\PB_n\simeq \Pure_{n-2}(\bC\setminus\{0,1\})\rtimes_\varphi\Z.$$
\end{lemma}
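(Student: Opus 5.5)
We will derive the isomorphism $\PB_n\simeq \Pure_{n-2}(\bC\setminus\{0,1\})\rtimes_\varphi\Z$ from the Fadell--Neuwirth fibration, using the affine group of $\bC$ to normalise the first two strands. Identify $\PB_n=\pi_1\F_n(\bC)$; this is legitimate because $\F_n(D^2)$ and $\F_n(\bC)$ are homotopy equivalent, since $\bC$ retracts onto an open disc. Consider the projection
\[
p\colon \F_n(\bC)\to \F_2(\bC),\qquad (p_1,\dots,p_n)\mapsto (p_1,p_2).
\]
We will not use the fibration property abstractly. Instead we trivialise $p$ explicitly with the affine map sending $p_1\mapsto 0$ and $p_2\mapsto 1$. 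Concretely, the map
\[
\F_n(\bC)\to \F_2(\bC)\times \F_{n-2}(\bC\setminus\{0,1\}),\qquad (p_1,\dots,p_n)\mapsto\Bigl((p_1,p_2),\Bigl(\tfrac{p_3-p_1}{p_2-p_1},\dots,\tfrac{p_n-p_1}{p_2-p_1}\Bigr)\Bigr)
\]
is a homeomorphism. Its inverse is $((a,b),(q_3,\dots,q_n))\mapsto (a,b,a+(b-a)q_3,\dots,a+(b-a)q_n)$.

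Next we compute the base. The space $\F_2(\bC)$ is homeomorphic to $\bC\times\bC^*$ via $(a,b)\mapsto (a,b-a)$, so it deformation retracts onto a circle and $\pi_1\F_2(\bC)\cong\Z$, generated by $A_{12}$. The product decomposition then gives
\[
\PB_n\cong \Z\times \Pure_{n-2}(\bC\setminus\{0,1\}),
\]
which is in particular a semidirect product with trivial action $\varphi$. The generator of $\Z$ lifts, under the section $(a,b)\mapsto(a,b,\text{fixed }q)$, to the loop in which $p_2$ circles $p_1$ while the remaining points rotate rigidly with them.

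The main obstacle is deciding which splitting is intended. The natural section places the $\Z$-factor on the full twist of all strands, so the action is trivial and the product is direct. If instead one wants $\Z=\langle A_{12}\rangle$ with only $p_2$ moving, the same trivialisation shows that $A_{12}$ maps to $(1,\delta)$, where $\delta\in\Pure_{n-2}(\bC\setminus\{0,1\})$ is the loop in which the points $q_i$ rotate once around the circle through $0$ and $1$. In that presentation, $\varphi$ is conjugation by $\delta$.

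The delicate step is checking that this rotating loop lies in $\Pure_{n-2}(\bC\setminus\{0,1\})$ and identifying it with the appropriate product of $A_{ij}$. Either choice of splitting yields the stated isomorphism, since both give $\PB_n$ as the same product. We will present the explicit homeomorphism as the proof and record $\varphi$ in the form used later in the paper.
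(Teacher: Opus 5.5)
Your argument is correct, but it takes a different route from the paper.

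The paper asserts the Fadell--Neuwirth sequence $1\to\Pure_{n-2}(\bC\setminus\{0,1\})\to\PB_n\to\PB_2\to 1$ (freeze the first two strands at $0,1$, forget the rest) without justifying its exactness. It then obtains a splitting abstractly because $\PB_2\cong\Z$ is free, and leaves $\varphi$ unidentified.

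You instead trivialise $\F_n(\bC)\to\F_2(\bC)$ globally by the affine normalisation $q_i=(p_i-p_1)/(p_2-p_1)$. This gives exactness and a section at once, and it proves more.
\begin{itemize}
\item You get $\PB_n\cong\Z\times\Pure_{n-2}(\bC\setminus\{0,1\})$, with the $\Z$ factor generated by the central full twist.
\item For the section $1\mapsto A_{12}$, your formula shows that $\varphi$ is conjugation by an element $\delta$ of the kernel. So $\varphi$ is inner, and that semidirect product is also isomorphic to a direct product.
\end{itemize}
In particular, the contrast the paper later draws with the direct product $\pi_1\Quad^{\mathrm{lab}}(\sow)\cong\Pure_{n-2}(\bC\setminus\{0,1\})\times\Z$ is only a matter of choosing the section. $\PB_n$ is a direct product for every $n$, not just for $n=3$.

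The paper's version buys brevity. Yours buys an explicit model and an explicit $\varphi$.

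The step you call delicate is automatic. Your homeomorphism sends any loop to a loop in the product, so the second coordinate lies in $\Pure_{n-2}(\bC\setminus\{0,1\})$ by construction. Concretely, if $p_2$ circles $p_1$ once, each $q_i$ circles $0$ once clockwise at radius $>1$, enclosing both $0$ and $1$. No expression of $\delta$ in terms of the $A_{ij}$ is needed.

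A minor point: $\bC$ does not retract onto an open disc. What you need is that the open disc is homeomorphic to $\bC$, and that $\F_n$ of the closed disc is homotopy equivalent to $\F_n$ of its interior.
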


\begin{proof}
	Consider the short exact sequence $$1\longrightarrow \Pure_{n-2}(\bC\setminus\{0,1\})\xrightarrow{j}\PB_n\xrightarrow{p_*}\PB_2\longrightarrow1,$$ where $j$ freezes the first two strands at $0$ and $1$, and $p_*$ forgets the remaining $n-2$ strands. Since $\PB_2\cong\Z$, the sequence splits, yielding the claimed semidirect product.
\end{proof}

\medskip
The groups introduced here reappear later in two ways:
as kernels of the forgetful map
$$F_M:\MCG(\mathbf{S}_\Delta)\to\MCG(\mathbf{S}),$$
and as explicit fundamental groups of certain strata of quadratic differentials.
For example, some strata give rise to a braid group on three strands, while others yield the pure braid group on three strands.

\subsection{Surfaces}\label{subsection:surfaces}

We begin by recalling the classes of surfaces on which all of our combinatorial and geometric constructions will take place.
Throughout the paper, these surfaces serve as the underlying topological models for the strata of quadratic differentials we study. In particular:
\begin{itemize}
	\item \emph{marked surfaces} model the real blow-up at higher–order poles;
	\item \emph{decorated marked surfaces} additionally record the zeroes of a differential as interior marked points; and
	\item \emph{weighted decorated marked surfaces} refine this further by encoding the order of each zero via a weight function.
\end{itemize}
Our treatment of marked and decorated marked surfaces follows §3.1-3.4 and §4.1 in \cite{KQ2}, while the weighted setup--needed to handle higher-order zeroes-- follows §3.2 in \cite{BMQS}.
These three levels of structure support three corresponding notions of combinatorics: triangulations, decorated triangulations, and  $\mathbf{w}$-mixed–angulations, which in turn give rise to the various exchange graphs studied later.

\begin{definition}\label{def:MS}
	An \emph{(unpunctured) marked surface} 
	$$\mathbf{S}=(S,\mathbf{M})$$ 
	is a compact connected oriented smooth surface~$S$ with nonempty boundary~$\partial S$, together with a finite set $\mathbf{M}\subset\partial S$ of marked points such that every boundary component contains at least one marked point.  
	Up to diffeomorphism, $\mathbf{S}$ is determined by its genus~$g$, the number~$b$ of boundary components, and the distribution of $m=|\mathbf{M}|$ marked points among them.
\end{definition}

In the framework of quadratic differentials, one often enhances a marked surface by adding interior points which record zeros of a differential.  
These lead to the notion of a decorated marked surface.

\begin{definition}\label{def:DMS}
	A \emph{decorated marked surface} 
	$$\mathbf{S}_\Delta=(\mathbf{S},\Delta)$$ 
	is a marked surface~$\mathbf{S}$ together with a finite set $\Delta\subset\operatorname{int}(\mathbf{S})$ of \emph{decorating points}.  
	The decorating points play the role of zeros of a quadratic differential.  
	For a surface of genus~$g$ with $b$ boundary components and $m$ boundary marked points, we define
	$$
	\aleph = 4g - 4 + 2b + m.
	$$
	In the simplest case, corresponding to differentials with only simple zeros, one has $|\Delta|=\aleph$.  
	In general, however, we allow any finite subset $\Delta\subseteq\operatorname{int}(\mathbf{S})$ with $|\Delta|\le\aleph$, to accommodate higher-order zeros.
\end{definition}

Finally, we introduce weights to record the orders of these zeros.  
This leads to the general notion of a \emph{weighted decorated marked surface}.

\begin{definition}\label{wDMS}
	A \emph{weighted decorated marked surface} (abbreviated \emph{wDMS})
	$$
	\mathbf{S}_{\mathbf{w}} = (\mathbf{S},\Delta,\mathbf{w})
	$$
	is a decorated marked surface~$\mathbf{S}_\Delta$ together with a \emph{weight function}
	$$
	\mathbf{w}\colon \Delta \longrightarrow \mathbb{Z}_{\ge -1}.
	$$
	We write $r = |\Delta|$ for the number of decorating points (also called \emph{finite critical points}) and
	$$
	\|\mathbf{w}\| = \sum_{Z\in\Delta} \mathbf{w}(Z)
	$$
	for their total weight.
	The weight function~$\mathbf{w}$ is said to be \emph{compatible} with~$\mathbf{S}$ if
	$$
	\|\mathbf{w}\| - (m + 2b) = 4g - 4,
	$$
	where $g$, $b$, and $m=|\mathbf{M}|$ denote the genus, the number of boundary components, and the number of marked points, respectively.  
	In the case where every zero is simple, all weights are~$1$, and we recover the ordinary decorated marked surface~$\mathbf{S}_\Delta$.
\end{definition}

\subsection{Mapping Class Groups}

Let $S$ be a compact, connected, oriented smooth surface.
Denote by $\Homeo^+(S,\partial S)$ the group of orientation-preserving homeomorphisms
of $S$ that restrict to the identity on~$\partial S$.
\begin{definition}\label{def:MCG}
	The \emph{mapping class group} of~$S$ is
	\[
	\MCG(S) := \Homeo^+(S,\partial S)/\Homeo_0(S,\partial S),
	\]
	where $\Homeo_0(S,\partial S)$ is the identity component.
	Replacing homeomorphisms by diffeomorphisms or isotopies by homotopies yields the same
	group.
\end{definition}
\medskip

Let $\mathbf{S}$ be a marked surface with marked points
$\mathbf{M}\subset\partial S$, and let
$\mathbf{S}_\Delta$ be a decorated marked surface with
decorations $\Delta$ in the interior of~$S$.  

\begin{definition}\label{def:MCG_S_S_Delta}
	We define
	\[
	\MCG(\mathbf{S}) := \Homeo^+(\mathbf{S})/\text{isotopy},\qquad
	\MCG(\mathbf{S}_\Delta) := \Homeo^+(\mathbf{S}_\Delta)/\text{isotopy},
	\]
	where $\Homeo^+(\mathbf{S})$ (resp. $\Homeo^+(\mathbf{S}_\Delta)$) denotes the group
	of orientation-preserving homeomorphisms of $\mathbf{S}$ fixing $\mathbf{M}$ (resp.
	$\mathbf{M}\cup\Delta$) setwise.  
\end{definition}

Forgetting the decorations gives a natural homomorphism
\[
F_M:\MCG(\mathbf{S}_\Delta)\longrightarrow \MCG(\mathbf{S}).
\]

\begin{lemma}\label{lem:kernel_F}
	The kernel of the forgetful map
	$F_M:\MCG(\mathbf{S}_\Delta)\to\MCG(\mathbf{S})$
	is the surface braid group (see Definition~\ref{def:surfacebraidgroup})
	\[
	\B_n(\mathbf{S})=\pi_1(D_n(S),\Delta),
 	\]
	where $D_n(S)$ is the configuration space of $n=|\Delta|$ unordered points on~$S$.
\end{lemma}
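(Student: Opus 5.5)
We compare the two groups through the Birman exact sequence obtained from point-pushing, with the fibration of homeomorphism groups as the underlying input. Consider the evaluation map
\[
\mathrm{ev}\colon \Homeo^+(\mathbf{S}) \longrightarrow D_n(S), \qquad \phi \longmapsto \phi(\Delta),
\]
where the target is the configuration space of $n=|\Delta|$ unordered points in the interior of $S$. By the standard Palais--Cerf argument this map is a locally trivial fibration. Its fibre over $\Delta$ is $\Homeo^+(\mathbf{S}_\Delta)$, the homeomorphisms fixing $\mathbf{M}$ and preserving $\Delta$ setwise.

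The long exact homotopy sequence then gives
\[
\pi_1\Homeo^+(\mathbf{S}) \to \pi_1(D_n(S),\Delta) \xrightarrow{\;\partial\;} \pi_0\Homeo^+(\mathbf{S}_\Delta) \to \pi_0\Homeo^+(\mathbf{S}) \to \pi_0 D_n(S).
\]
The last term is a point, because $D_n(S)$ is connected. The middle map is exactly $F_M$, since $\pi_0$ of a homeomorphism group is its mapping class group in the sense of Definition~\ref{def:MCG_S_S_Delta}. Exactness at $\MCG(\mathbf{S}_\Delta)$ therefore gives $\ker F_M = \operatorname{im}\partial$. The boundary map $\partial$ is the point-pushing map: it sends a loop of configurations to the isotopy class at time $1$ of an ambient isotopy that drags $\Delta$ along the loop.

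The main step is to show that $\partial$ is injective, which amounts to showing that the image of $\pi_1\Homeo^+(\mathbf{S})$ is trivial. We take the identity component of $\Homeo^+(\mathbf{S})$ to consist of isotopies fixing the marked points, so the homotopy type is that of homeomorphisms fixing $\partial S$ (or a collar) up to the rotation data at the boundary. When $\partial S\neq\emptyset$ and $S$ is not a disc or an annulus, the identity component is contractible by Earle--Eells and Earle--Schatz. Hence $\pi_1=0$, and $\partial$ is injective.

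The disc and annulus cases need a separate check, since there $\pi_1$ of the rotation part could be $\mathbb{Z}$. Here the marked points on each boundary component are what save us. Every boundary component carries a marked point, and an isotopy fixing $\mathbf{M}$ cannot rotate a boundary circle through a full turn continuously while staying in the identity component relative to $\mathbf{M}$. Equivalently, the relevant homeomorphism group is homotopy equivalent to homeomorphisms fixing $\partial S$ pointwise, whose identity component is contractible by Alexander's trick and its annular analogue. So the loop $\pi_1\Homeo^+(\mathbf{S})\to\pi_1 D_n(S)$ is trivial in these cases as well.

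Combining the two paragraphs, $\partial$ is an isomorphism onto $\ker F_M$, so
\[
\ker F_M \cong \pi_1(D_n(S),\Delta) = \B_n(\mathbf{S}).
\]
The delicate point is the conventions: whether boundary components are fixed pointwise or only at $\mathbf{M}$. I expect this to be the main obstacle, and I would handle it by reducing explicitly to the pointwise-fixed case before invoking the contractibility results.
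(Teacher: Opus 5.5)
Your proposal is correct and follows essentially the same route as the paper: the evaluation fibration $f\mapsto f(\Delta)$ onto $D_n(S)$, its long exact homotopy sequence, connectedness of $D_n(S)$, and the vanishing of $\pi_1$ of the identity component of the homeomorphism group. The only difference is the order of steps. The paper first reduces to homeomorphisms fixing $\partial S$ pointwise and then simply assumes $\pi_1\Homeo^+(S)=1$, citing Guaschi--Juan-Pineda. You instead run the fibration on the $\mathbf{M}$-preserving group and justify the vanishing explicitly via Earle--Eells/Earle--Schatz and the Alexander trick, which is slightly more complete than the paper's argument.
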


\begin{proof}
	Since $\MCG(\mathbf{S}_\Delta)$ and $\MCG(\mathbf{S})$ act in the same way on the marked points $\mathbf{M}$, we can restrict to the subgroups which fix $\mathbf{M}$ pointwise. However, any such (orientation preserving) diffeomorphism is isotopic to one that fixes the boundary $\partial S$ pointwise.
	Hence the required kernel is the same as the kernel of the forgetful map $\MCG(\mathbf{S}_\Delta, \partial S) \rightarrow \MCG(\mathbf{S}, \partial S)$ between the mapping class groups that fix the boundary. 
	Next the map $\Psi: \Homeo^+(S) \rightarrow D_n(S)$ defined by $\Psi(f) = f(\Delta)$ is a locally trivial fibre bundle, whose fibre over $\Delta$ is equal to $\Homeo^+(S, \Delta)$. Taking the long
	exact sequence in homotopy of this fibration yields:
	\begin{center}
		\begin{tikzcd}[column sep=tiny]
			& ...\rar	& \pi_1(\Homeo^+(S, \Delta)) \rar & \pi_1(\Homeo^+(S)) \rar & \pi_1(D_n(S), \Delta)\ar[out=0, in=180, looseness=2, overlay]{dll} \\
			&  & \pi_0(\Homeo^+(S, \Delta)) \rar & \pi_0(\Homeo^+(S)) \rar & \pi_0(D_n(S), \Delta) \rar & 0
		\end{tikzcd}
	\end{center}
	If $\pi_1(\Homeo^+(S)) = 1$ and $\pi_0(\D_n(S), \Delta) = 1$ hold, this reduces to the short exact sequence 
	\begin{center}
		\begin{tikzcd}[column sep=small]
			1 \rar & \pi_1(D_n(S), (\Delta)) \rar & \MCG(\mathbf{S}_\Delta, \partial S) \rar & \MCG(\mathbf{S}, \partial S) \rar & 1
		\end{tikzcd}
	\end{center}
	For more details see § 2.4 (5) from \cite{GP}. 
\end{proof}

\begin{definition}\label{def:MCG_S_w}
	Let $\mathbf{S_w}=(S,\mathbf{M},\Delta)$ be a weighted decorated marked surface.
	The \emph{weighted mapping class group} $\MCG(\mathbf{S_w})$ is the group of isotopy classes of orientation-preserving homeomorphisms of~$S$ that fix $\mathbf{M}$ and $\Delta$ setwise and preserve the weights:
	$$ \MCG(\mathbf{S_w})=\Homeo^+(\mathbf{S_w})/\text{isotopy}.$$
\end{definition}

%% file: sec_eg.tex
\section{Exchange Graphs}\label{sec:eg}

In this section we develop the combinatorial structures associated with the surfaces introduced in Subsection~\ref{subsection:surfaces}.
For each type of surface—marked, decorated, and weighted decorated—we equip it with a corresponding class of arc systems (triangulations or mixed-angulations), together with a flip operation that replaces one arc by another.
The resulting combinatorics is encoded in the exchange graph, whose vertices are the arc systems and whose edges correspond to flips.

For marked and decorated surfaces we follow the framework of
§3.1–§3.4 of \cite{KQ2}, while in the weighted setting we adopt the
mixed–angulation model of §3.3 of \cite{BMQS} to define the exchange
graph.
In each setting the exchange graph carries a set of distinguished loops, arising from the possible local configurations of pairs of arcs.
These loops—square, pentagon, and (in certain cases) hexagon loops—play a central role throughout the paper: they encode local relations among flips, and the quotient of the fundamental group of the exchange graph by these loops will be studied and later compared with the fundamental groups of strata of quadratic differentials.

We begin with the exchange graphs of marked surfaces, both in their unoriented and oriented versions, then treat the decorated case, and finally extend the construction to weighted decorated marked surfaces, where an additional type of hexagon loop appears.
This prepares the ground for Section~\ref{sec:quad}, where these combinatorial models will be related to framed quadratic differentials and to the topology of their moduli spaces.


\subsection{Unoriented Exchange Graph of $\mathbf{S}$}\label{subsection:unorientedEG}

We first treat the unoriented exchange graph of a marked surface, following~\cite{KQ2}. 
In this setting the vertices are triangulations and edges correspond to flips.  
This case contains the essential combinatorics underlying all other exchange graphs and will serve as the starting point for the oriented, decorated, and weighted versions that follow.  
At the end of this subsection we show that the quotient of the fundamental group of the exchange graph by the square and pentagon loops is trivial (Theorem~\ref{thm:MS_pi1EGmodRel_unoriented}).

\begin{definition}\label{def:MS_triangulation}
	Let $\mathbf{S}$ be a marked surface.
	\begin{itemize}
		\item An \emph{open arc} on~$\mathbf{S}$ is the isotopy class of a simple, essential curve in $\mathbf{S}\setminus\partial S$ whose endpoints lie in~$\mathbf{M}$.  
	
		\item An open arc is \emph{essential} if it is not homotopic (rel.\ endpoints) to a boundary segment or a point.  
	
	 	\item Two arcs are \emph{compatible} if they can be represented by disjoint curves, except possibly at their endpoints.
	
		\item A \emph{triangulation}~$T$ of~$\mathbf{S}$ is a maximal collection of pairwise compatible open arcs.
	\end{itemize}
\end{definition}

\medskip
\noindent
Each triangulation subdivides the surface into finitely many ideal triangles, whose sides are either open arcs or boundary segments.  
Any two triangulations are related by a finite sequence of \emph{flips}, each of which replaces a single arc by another as follows.

\begin{figure}[h]
	\centering
	\begin{tikzpicture}[scale=.3]
		\coordinate (A) at (1,1);
		\foreach \ang in {45,135,225,315}{
			\coordinate (P\ang) at ($(A) + (\ang:4)$);
			\fill (P\ang) circle (1.5mm);
		}
		\draw[thick] (P45) -- (P135);
		\draw[thick] (P225) -- (P315);
		\draw[thick] (P135) -- (P225);
		\draw[thick] (P315) -- (P45);
		\draw (P225) -- (P45);
		\draw ($(A) + (4,0)$) --($(A) + (7,0)$);
		\node at ($(A) + (5.5,-0.5)$) {\tiny flip}; 
		
		\node[above] at ($(P225)!0.5!(P45)$) {$\gamma$}; 
		
		\coordinate (A) at (12,1);
		\foreach \ang in {45,135,225,315}{
			\coordinate (P\ang) at ($(A) + (\ang:4)$);
			\fill (P\ang) circle (1.5mm);
		}
		\draw[thick] (P45) -- (P135);
		\draw[thick] (P225) -- (P315);
		\draw[thick] (P135) -- (P225);
		\draw[thick] (P315) -- (P45);
		\draw (P135) -- (P315);
		
		\node[above] at ($(P135)!0.5!(P315)$) {$\gamma'$};
	\end{tikzpicture}
	\caption{A flip: replacing one diagonal of a quadrilateral by the other.}
	\label{fig:forwardflip_unoriented}
\end{figure}
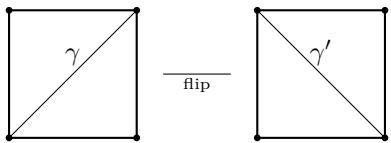

\begin{definition}\label{MS_EG_unoriented}
	The (unoriented) \emph{exchange graph} $\underline{\EG}(\mathbf{S})$ is the connected graph whose vertices correspond to triangulations of~$\mathbf{S}$ and whose edges correspond to flips between them, that is, to the replacement of one arc~$\gamma\in T$ by the opposite diagonal~$\gamma'$ of the quadrilateral formed by the two triangles adjacent to~$\gamma$ as seen in Figure~\ref{fig:forwardflip_unoriented}.
	Each vertex has valency~$|T|$, the number of arcs in any triangulation of~$\mathbf{S}$, so the graph is $|T|$--regular.
\end{definition}

\medskip
\noindent\textbf{Relations in $\pi_1\underline{\EG}(\mathbf{S})$.}

There are two basic kinds of closed loops that appear in the exchange graph.  
Each such loop is determined by a choice of two distinct arcs in a triangulation: either the two arcs intersect in no triangle, producing what we call a \emph{square} loop, or they intersect in exactly one triangle, giving rise to a \emph{pentagon} loop.  
We refer to these as square and pentagon \textit{relations} in anticipation of the later step where we quotient the fundamental group by those loops.
The terminology reflects the shape of the corresponding loop in the exchange graph.

\begin{figure}[h]
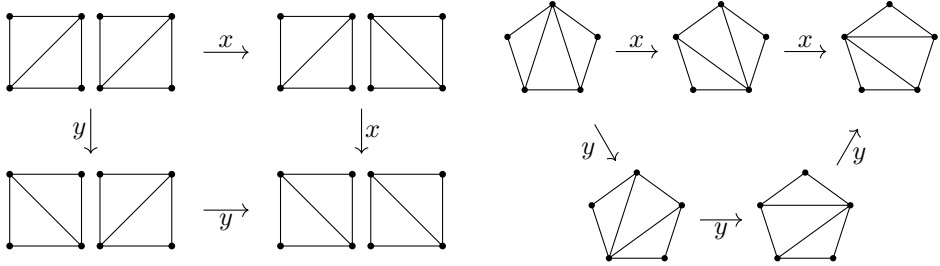
 
	\centering 
	\begin{minipage}{0.48\textwidth} 
		\centering 
		\input{3_Paper_S/Pictures/square_relation} 
	\end{minipage}
	\hfill 
	\begin{minipage}{0.48\textwidth} 
		\centering 
		\input{3_Paper_S/Pictures/pentagon_relation} 
	\end{minipage} 
	\caption{left: Square relation,  right: Pentagon relation} 
	\label{fig:squareandpentagon}
\end{figure}

\begin{center}
	\renewcommand{\arraystretch}{1.2}
	\begin{tabular}{c|c|c}
		Local configuration & Relation in $\pi_1\underline{\EG}(\mathbf{S})$ & Shape in $\underline{\EG}(\mathbf{S})$ \\ \hline
		Two arcs intersect in no triangle & $ (y^{2})^{-1} \cdot x^2$ & square: Figure~\ref{fig:squareandpentagon} \\
		Two arcs intersect once in one triangle & $ (y^{-3})^{-1} \cdot x^2$ & pentagon: Figure~\ref{fig:squareandpentagon} 
	\end{tabular}
\end{center}

\begin{definition}\label{def:MS_Rel_unoriented}
	Let $\pi_1\underline{\EG}(\mathbf{S})$ denote the fundamental group of the unoriented exchange graph.  We denote by $$\underline{\Rel}(\mathbf{S})\subseteq \pi_1\underline{\EG}(\mathbf{S})$$ the
	normal subgroup generated by all square and pentagon loops:
	\[
	\underline{\Rel}(\mathbf{S}) := \langle
	\text{square, pentagon loops in } \underline{\EG}(\mathbf{S})\rangle.
	\]
\end{definition}

We end this paragraph with the
following theorem.

\begin{theorem}[{\cite[Thm.\,3.5]{KQ2}; see also \cite[Thm.\,3.10]{FST}}]\label{thm:MS_pi1EGmodRel_unoriented}
	The fundamental group of the exchange graph $\underline{\EG}(\mathbf{S})$ is generated by the square and pentagon relations. In other words $$\pi_1\underline{\EG}(\mathbf{S})/\underline{\Rel}(\mathbf{S}) = 1.$$
\end{theorem}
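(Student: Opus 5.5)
We aim to show that every closed loop in $\underline{\EG}(\mathbf{S})$ is a product of conjugates of square and pentagon loops. Equivalently, the 2-complex obtained from $\underline{\EG}(\mathbf{S})$ by attaching a 2-cell along every square and pentagon loop should be simply connected. We would not argue directly on the graph. Instead we would use the dual geometric object, the \emph{arc complex} $A(\mathbf{S})$: the simplicial complex whose simplices are collections of pairwise compatible essential arcs. Triangulations are its top-dimensional simplices. We would use the classical fact (Harer, Hatcher) that for an unpunctured marked surface, with enough marked points to rule out the degenerate small cases, the arc complex is a contractible ball or sphere-type complex of dimension $n-1$, where $n=|T|$. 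For the statement we only need that it is simply connected, and contractibility gives this.

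The key step is to identify the 2-complex described above with the 2-skeleton of the cell complex dual to $A(\mathbf{S})$. A codimension-$k$ simplex $\sigma$ of $A(\mathbf{S})$ is a partial triangulation missing $k$ arcs. Its link is the arc complex of the surface obtained by cutting along $\sigma$, which is a disjoint union of polygons (discs with marked points). Dualising, we get:
\begin{itemize}
\item vertices of the dual complex are triangulations;
\item edges are codimension-$1$ simplices, i.e.\ flips;
\item 2-cells are codimension-$2$ simplices.
\end{itemize}
For a codimension-$2$ simplex, the complement of the $n-2$ arcs is either two quadrilaterals or one pentagon. Two quadrilaterals occur when the two removed arcs lie in no common triangle, and the triangulations of that piece form a square. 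One pentagon occurs when the two arcs bound a common triangle, and its five triangulations form a pentagon. These are exactly the square and pentagon loops of Definition~\ref{def:MS_Rel_unoriented}.

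The dual complex of a (PL-manifold-like or at least "dual-cell" admitting) contractible complex is homotopy equivalent to it. Hence its 2-skeleton has trivial $\pi_1$, and $\pi_1\underline{\EG}(\mathbf{S})/\underline{\Rel}(\mathbf{S})=1$.

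We expect the main obstacle to be justifying the duality and simple connectivity. The arc complex is not finite in general, since the mapping class group acts, so it is not a manifold. One therefore needs the dual-cell construction to be valid, namely that links of simplices are spheres or balls, which holds because they are joins of polygon associahedra boundaries. If this route becomes technical, we would instead use the standard alternative of \cite{FST}. There one shows that any loop in $\underline{\EG}(\mathbf{S})$ can be contracted by induction: fix an arc $\gamma$, and push the loop into the subgraph of triangulations containing $\gamma$ using square and pentagon moves. That subgraph is the exchange graph of the cut surface and is handled inductively, with the base case being polygons, whose exchange graph is the 1-skeleton of the associahedron with its square and pentagon 2-faces. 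The pushing step is carried out by lexicographic reduction of intersection numbers with $\gamma$. Since the statement is quoted from \cite[Thm.\,3.5]{KQ2} and \cite[Thm.\,3.10]{FST}, the cleanest write-up invokes one of these arguments.
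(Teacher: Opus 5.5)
Your route is the same as the paper's. The paper, following Chekhov, also treats $\underline{\EG}(\mathbf{S})$ as the $1$-skeleton of the complex dual to the arc complex $A(\mathbf{S})$ and reads the relations off the codimension-two faces. You are more explicit than the paper about the topological input this needs.

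However, the step that justifies the duality is false as stated, and it is the same point the paper's case distinction skips. A codimension-two face does not always leave two quadrilaterals or one pentagon. If $\gamma_1,\gamma_2\in T$ are sides of the same two triangles (the configuration that produces the hexagon in the oriented graph), removing them leaves an annulus with one marked point on each boundary component. The link of that face is the arc complex of this annulus, an infinite line of bridging arcs, not a sphere. Correspondingly, the triangulations containing $T\setminus\{\gamma_1,\gamma_2\}$ form a bi-infinite path in $\underline{\EG}(\mathbf{S})$, and there is no $2$-cell.

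More generally, every non-filling arc system has a contractible, non-spherical link. So your claim that links are joins of boundaries of associahedra fails, and dual cells exist only for filling arc systems. The resulting dual complex is a deformation retract of $A(\mathbf{S})\setminus A_\infty(\mathbf{S})$, where $A_\infty$ is the non-filling subcomplex, not of $A(\mathbf{S})$. Contractibility of $A(\mathbf{S})$ says nothing directly about the complement of a subcomplex, so ``the dual complex of a contractible complex is homotopy equivalent to it'' cannot be invoked here.

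Separately, for the disc with $m$ marked points, $A(\mathbf{S})\cong S^{m-4}$ is not contractible; it is a circle when $m=5$. There you must include the cell dual to the empty arc system, i.e.\ work with the associahedron itself.

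The missing ingredient is simple connectivity of the filling part. One way is Harer--Penner, who identify $A(\mathbf{S})\setminus A_\infty(\mathbf{S})$ with a contractible Teichm\"uller-type space. Alternatively, derive it from Hatcher's theorem by induction on complexity:
\begin{itemize}
\item Apply Quillen's fibre lemma to the inclusion of the poset of filling arc systems into the poset of all nonempty arc systems.
\item For a non-filling $p$, the fibre over $p$ is the product, over the pieces of $\mathbf{S}$ cut along $p$, of the posets of filling arc systems of those pieces.
\item A disc piece contributes a poset with a least element, the empty system.
\item A non-disc piece has smaller complexity, so its poset is contractible by induction.
\end{itemize}
Hence the dual complex is homotopy equivalent to $A(\mathbf{S})$, which is contractible when $\mathbf{S}$ is not a disc. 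Only then does its $2$-skeleton, $\underline{\EG}(\mathbf{S})$ with $2$-cells attached along squares and pentagons, have trivial $\pi_1$.

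Your fallback via ``lexicographic reduction of intersection numbers'' is only named, not carried out, so as written it does not close this gap either.
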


Following the arguments of \cite{chekhov2007quantizing} we first define the following simplicial complex.
Let $\mathcal{A}(\mathbf{S})$ denote the simplicial complex whose vertices are isotopy classes of
arcs in~$\mathbf{S}$ and whose $k$-simplices are collections of $k+1$ pairwise compatible arcs.

\begin{lemma}\label{lem:1-skelt}
	The $1$-skeleton of the dual complex of $A(\mathbf{S})$ is exactly $\underline{\EG}(\mathbf{S})$.
\end{lemma}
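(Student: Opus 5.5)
The plan is to compare vertices and edges of the two graphs directly through the dual complex. The arc complex $\mathcal{A}(\mathbf{S})$ is the simplicial complex above. First I will record its dimension: every collection of pairwise compatible arcs extends to a maximal one. All maximal collections have the same cardinality $|T|$, because an Euler characteristic count for ideal triangulations fixes the number of arcs in terms of $g$, $b$, $m$. Hence $\mathcal{A}(\mathbf{S})$ is pure of dimension $n-1$, where $n=|T|$. Its maximal simplices are exactly the triangulations of $\mathbf{S}$ in the sense of Definition~\ref{def:MS_triangulation}. In the dual complex, top-dimensional simplices become vertices, and codimension-one faces become edges joining the top simplices that contain them. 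So the vertices of the $1$-skeleton are precisely the vertices of $\underline{\EG}(\mathbf{S})$.

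Next I will identify the edges. A codimension-one simplex is a collection $T\setminus\{\gamma\}$ of $n-1$ compatible arcs. Cutting $\mathbf{S}$ along these arcs leaves ideal triangles together with exactly one ideal quadrilateral, namely the union of the two triangles adjacent to $\gamma$. This uses that $\gamma$ is essential, so these two triangles are distinct, or at least glued along $\gamma$ into a genuine quadrilateral since there are no punctures. The completions of $T\setminus\{\gamma\}$ to a maximal compatible collection correspond to the diagonals of this quadrilateral. An arc compatible with all of $T\setminus\{\gamma\}$ must lie in the complementary region. Inside an ideal triangle, the only essential simple arcs between marked points are boundary-parallel, hence already present. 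Inside the quadrilateral they are the two diagonals $\gamma$ and $\gamma'$. Therefore every codimension-one face lies in exactly two top simplices, $T$ and $T'=(T\setminus\{\gamma\})\cup\{\gamma'\}$. The corresponding dual edge joins $T$ to $T'$, which is precisely the flip of Figure~\ref{fig:forwardflip_unoriented}. Conversely, each flip arises from the face $T\setminus\{\gamma\}$. This gives a bijection between dual edges and edges of $\underline{\EG}(\mathbf{S})$ that is compatible with the vertex bijection.

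The main obstacle is the claim that a codimension-one face has exactly two cofaces. This needs a careful classification of arcs in an ideal polygon cut out by compatible arcs. Self-folded triangles do not occur here because $\mathbf{S}$ is unpunctured (Definition~\ref{def:MS}). Still, one must argue that the two triangles adjacent to $\gamma$ really form a quadrilateral, possibly with identified vertices or sides, and that the arcs realising its diagonals are pairwise non-isotopic. I would handle this by lifting to the universal cover, or by working in the cut surface, where the region is an honest embedded quadrilateral. There isotopy classes rel endpoints of simple arcs are determined by their endpoints, which gives exactly two diagonals.
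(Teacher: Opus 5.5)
Your proposal is correct and follows the same route as the paper. The paper also identifies the maximal simplices of $\mathcal{A}(\mathbf{S})$ with triangulations and observes that each codimension-one face $T\setminus\{\gamma\}$ has exactly two completions, which differ by a flip. You supply details the paper leaves implicit: purity via the arc count, the absence of self-folded triangles because $\mathbf{S}$ is unpunctured, and the possibly self-glued quadrilateral. You also get the dimension indexing right, whereas the paper writes $N$-simplices where it means $(N-1)$-simplices.
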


\begin{proof}
	The $N$-simplices correspond to triangulations, with $N$ being the number of arcs in a maximal arc complex. The $(N-1)$-simplices are triangulations where one edge has been removed. There are only two ways (up to homotopy) to extend such an arcs system to a triangulation, and both ways differ by a flip at the new arc. Hence, each $(N-1)$-simplex lies in exactly two $N$-simplices and the arc systems corresponding to those two $(N-1)$-simplices differ by a flip.
\end{proof}

Now we can prove the theorem.

\begin{proof}[Proof of the Theorem]
	By Lemma~\ref{lem:1-skelt} a closed path in $\underline{\EG}(\mathbf{S})$ can be seen as a closed path in the interior of $N$-simplices and $(N-1)$-simplices $A(\mathbf{S})$ with base point in the interior of an $N$-simplex.
	This loop can be homotoped through the interior of $(N-2)$-simplices.
	Now there are two types of $(N-2)$-simplices. We recall that an $(N-2)$-simplices corresponds to a maximal arc system with two arcs removed. 
	\begin{enumerate}
		\item The arc system consists of two $4$-gon and the rest are triangles, or
		\item the arc system consist of one $5$-gon and the rest are triangles.
	\end{enumerate}
	In case (1) the $(N-2)$-simplex lies in exactly four $N$-simplices and four $(N-1)$-simplices. This will give us the square relation.
	In case (2) the $(N-2)$-simplex lies in exactly five $N$-simplices and five $(N-1)$-simplices. This will give us the pentagon relation.
	Together with the connectivity of the graph this proves the claim.
\end{proof}

\subsection{Oriented Exchange Graph of $\mathbf{S}$}\label{subsec:MS_EG}

We now refine the unoriented exchange graph by recording the direction of flips.  
In the oriented exchange graph of a marked surface, each flip is replaced by a pair of oppositely oriented edges, producing an oriented graph.
Besides the square and pentagon loops already present in the unoriented case, \textit{hexagon loops} appear when two arcs are adjacent in exactly two triangles. 

In this subsection we introduce the \emph{triangulation braid group}, a purely
topological analogue of the cluster braid group of \cite{KQ2}.  
We deliberately avoid any cluster-theoretic terminology and formulate everything in
terms of surfaces.  
We then prove that the triangulation braid group is precisely the quotient of the
fundamental group of the oriented exchange graph by the square, pentagon, and
hexagon relations.

\begin{definition}\label{MS_EG}
	The \emph{oriented exchange graph}~$\EG(\mathbf{S})$ is obtained from the unoriented exchange graph~$\underline{\EG}(\mathbf{S})$ by replacing each edge corresponding to a flip with a $2$--cycle of oppositely  oriented edges. In particular, every flip now appears as a pair of mutually inverse oriented moves.
\end{definition}

\medskip
\noindent\textbf{Relations in $\pi_1\EG(\mathbf{S})$.}

In the oriented exchange graph a third type of closed loop appears.  
Each loop is again determined by a choice of two distinct arcs in a triangulation.  
Besides the square and pentagon loops, there is now a \emph{hexagon} loop, which occurs precisely when the two arcs intersect in exactly two triangles, i.e. when they share a pair of consecutive triangles forming a topological annulus.  
As before, we refer to the square, pentagon, and hexagon loops as \textit{relations} in anticipation of the later quotient in which these oriented cycles are null-homotopic.  
Their names again record the combinatorial shape of the corresponding loops in the oriented exchange graph.

\begin{figure}[h]
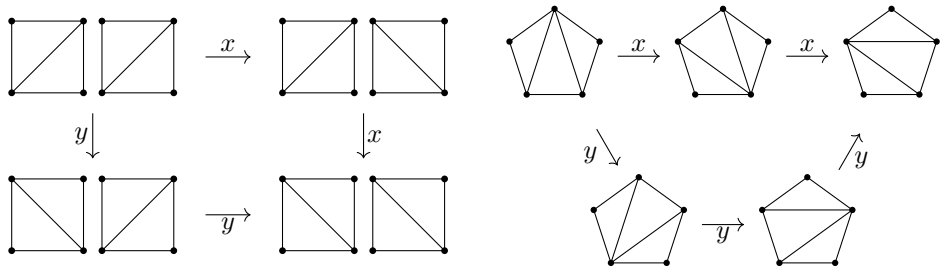
 
	\centering 
	\begin{minipage}{0.48\textwidth} 
		\centering 
		\input{3_Paper_S/Pictures/square_relation} 
	\end{minipage}
	\hfill 
	\begin{minipage}{0.48\textwidth} 
		\centering 
		\input{3_Paper_S/Pictures/pentagon_relation} 
	\end{minipage} 
	\caption{left: Square relation,  right: Pentagon relation} 
	\label{fig:squareandpentagon}
\end{figure}

\begin{figure}[ht]
	\centering
	\input{3_Paper_S/Pictures/hexagon_relation}
	\caption{Hexagon relation}
	\label{fig:hexagon}
\end{figure}

\begin{center}
	\renewcommand{\arraystretch}{1.2}
	\begin{tabular}{c|c|c}
		Local geometry & Relation in $\pi_1\EG(\mathbf{S})$ & Shape in $\EG(\mathbf{S})$ \\ \hline
		Two arcs intersect in no triangle & $(y^2)^{-1} \cdot x^2$ & square: Figure~\ref{fig:squareandpentagon} \\
		Two arcs intersect once in one triangle & $(y^3)^{-1} \cdot x^2$ & pentagon: Figure~\ref{fig:squareandpentagon} \\
		Two arcs intersect once in each of two& $(y x^2)^{-1} \cdot x^2 y$ & hexagon: Figure~\ref{fig:hexagon}\\
	 	triangles
	\end{tabular}
\end{center}

\medskip

Under the natural projection from the oriented to the unoriented exchange graph
$\EG(\mathbf{S})\!\to\!\underline{\EG}(\mathbf{S})$, we use the same terminology for the images of these
relations, referring to them again as squares, pentagons, and hexagons.  
Note, however, that in this projection the oriented paths $xy$ and $yx$ lie in the
kernel, so the hexagon relation $(y x^2)^{-1} \cdot x^2 y$ becomes trivial in $\underline{\EG}(\mathbf{S})$.

\begin{definition}\label{def:MS_Rel}
	Let $\pi_1\EG(\mathbf{S})$ denote the fundamental group of the oriented exchange graph.  We denote by $$\Rel(\mathbf{S})\subseteq \pi_1\EG(\mathbf{S})$$ the
	normal subgroup generated by all square and pentagon loops:
	\[
	\Rel(\mathbf{S}) := \langle
	\text{square, pentagon, hexagon loops in } \EG(\mathbf{S})\rangle.
	\]
\end{definition}

We now want to describe the quotient group $\pi_1\EG(\mathbf{S})/\Rel(\mathbf{S})$. Our goal is to show that this group is generated by certain canonical loops, called \emph{local twists}, which correspond to flipping an edge twice in the
same direction.  The subgroup generated by these elements will be referred to as the \emph{triangulation braid group}—a direct translation of the \emph{cluster braid group} introduced in ~\cite{KQ2} into the setting of triangulations.
The statements and proofs in the remainder of this subsection are adapted from §2.4 in \cite{KQ2}, but rewritten in the language of triangulations.

\begin{definition}\label{def:triangulationbraidgroup}
	Fix a triangulation $T=\{e_i\}$.
	For each edge $e_i\in T$, the \emph{local twist} $t_i$ at $e_i$ is the length–two loop obtained by composing the two forward flips at $e_i$.
	Thus $$t_i\in\pi_1(\EG(\mathbf{S}),T)/\Rel(\mathbf{S}).$$
	We define the \emph{triangulation braid group}
	\[
	\TB(T)\ :=\ \langle\, t_i\mid e_i\in T\,\rangle\ \subseteq \ \pi_1(\EG(\mathbf{S}),T)/\Rel(\mathbf{S}).
	\]
\end{definition}

The generators of the triangulation braid group satisfy the familiar braid relations, which justifies its name.  In particular, the local twists associated with non-adjacent edges commute, while those corresponding to two edges intersecting once in a single triangle satisfy the standard braid relation.  This is summarised in the following lemma.

\begin{lemma}\label{lem:TB_braidrelation}
	Let $e_i,e_j\in T$.
	\begin{enumerate}
		\item If $e_i$ and $e_j$ intersect in no triangle of $T$, then $$t_it_j=t_jt_i$$ in $\pi_1(\EG(\mathbf{S}),T)/\Rel(\mathbf{S})$.
		\item If $e_i$ and $e_j$ intersect once in a triangle of $T$, then $$t_it_jt_i=t_jt_it_j$$ in $\pi_1(\EG(\mathbf{S}),T)/\Rel(\mathbf{S})$.
	\end{enumerate}
\end{lemma}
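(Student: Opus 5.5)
We will prove both statements by writing each local twist as an explicit path in $\EG(\mathbf{S})$ and then rewriting one side into the other using the square and pentagon loops in $\Rel(\mathbf{S})$. Write $T'$ and $T''$ for the triangulations obtained from $T$ by one and by two forward flips at $e_i$. Then $t_i$ is the path $T\to T'\to T''=T$, where the last equality holds as unlabeled triangulations, since flipping an arc twice forward returns it to its original isotopy class.

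For the relations among $t_i$, $t_j$, $t_it_j$, we keep track of how the arcs change. Flipping $e_i$ forward does not change $e_j$, but it may change the combinatorial type of the pair.

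For part (1), suppose $e_i$ and $e_j$ lie in no common triangle. Then the two quadrilaterals containing $e_i$ and $e_j$ have disjoint interiors, so the flips at $e_i$ commute with the flips at $e_j$, in either direction. Each commutation is one square loop with boundary $x_ix_jx_i^{-1}x_j^{-1}$, where $x_i$ and $x_j$ are forward flips. We will check that after one forward flip of $e_i$ the image of $e_i$ still shares no triangle with $e_j$. This holds because the new arc lies inside the same quadrilateral, which contains no side equal to $e_j$ except possibly as a boundary side. The boundary case needs care: $e_j$ may be a side of $e_i$'s quadrilateral while not sharing a triangle with $e_i$ only if no triangle contains both, and adjacency would contradict the hypothesis. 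Having checked this, we apply the square relation twice to slide the two $x_i$ steps past the two $x_j$ steps. That gives $t_it_j=t_jt_i$.

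For part (2), suppose $e_i$ and $e_j$ share exactly one triangle. Then they sit in a pentagon region $P$, formed by three triangles of $T$, with $e_i$ and $e_j$ two non-crossing diagonals from a common vertex. The forward flips at $e_i$ and $e_j$ all take place inside $P$. The pentagon loop in $\EG(\mathbf{S})$ has the form $x^2=y^3$ in the oriented sense of the table: a path of two forward flips equals a path of three forward flips between the same two triangulations of $P$. Diagonals of a pentagon rotate under forward flips, so each $t$ acts as a rotation of the diagonal. We will label the five triangulations of $P$ and record which diagonal each forward flip moves. From this we express $t_it_jt_i$ and $t_jt_it_j$ as paths of length six in the pentagon part of $\EG(\mathbf{S})$ with the same endpoints.

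We then reduce each of these length-six paths, using pentagon loops at different base triangulations, to a common normal form. The intended normal form is the "half-twist" path that rotates the pentagon through its five triangulations with one more step. This is the standard verification from §2.4 of \cite{KQ2}.

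We expect the main obstacle to be this bookkeeping in part (2). At intermediate vertices the pair of arcs can move into hexagon position or back into square position. At each such step we must confirm that the loop we use really is one of the square, pentagon or hexagon loops in $\Rel(\mathbf{S})$, with the correct orientation. The hexagon relation $x^2y=yx^2$ may be needed to move a double flip past a single flip when the two arcs temporarily share two triangles.
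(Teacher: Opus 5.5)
Your strategy is the paper's: both relations are read off a diagram of forward flips, tiled by square loops in (1) and by pentagon loops in (2). Part (1) is fine up to a count. The diagram is the $3\times 3$ grid $\{T,T_i,T\}\times\{T,T_j,T\}$, where $T_i$, $T_j$, $T_{ij}$ arise from $T$ by flipping $e_i$, $e_j$, or both. This grid has four unit squares, based at $T$, $T_i$, $T_j$, $T_{ij}$, so four square relations are used, not two. Your check that a flipped arc still shares no triangle with the other arc is exactly what makes all four of them square loops.

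Part (2), however, is only announced. The braid relation \emph{is} the bookkeeping you defer, so as written the proposal does not prove it. Your worry about hexagon or square positions is also unfounded. Every triangulation of the pentagon $P$ is a fan whose two diagonals share exactly the middle triangle, so only pentagon loops occur. The common normal form also has length five, not six.

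Concretely, label the vertices of $P$ by $0,\dots,4$ anticlockwise, with $e_i=(0,2)$ and $e_j=(0,3)$. Put $F_v=\{(v,v+2),(v,v+3)\}$ for $v\in\mathbb{Z}/5$, so that $T$ restricts to $F_0$. Let $a_v\colon F_v\to F_{v+3}$ and $b_v\colon F_v\to F_{v+2}$ be the forward flips of $(v,v+2)$ and $(v,v+3)$. Writing paths from left to right:
\begin{itemize}
\item the pentagon loop with source $F_v$ reads $b_vb_{v+2}=a_va_{v+3}a_{v+1}$;
\item the local twists are $t_i=a_0b_3$ and $t_j=b_0a_2$.
\end{itemize}
Using the pentagons at $F_3,F_0,F_4$ for the first line, and at $F_3,F_2,F_1$ for the second, one gets
\begin{align*}
t_it_jt_i&=a_0(b_3b_0)a_2a_0b_3=(a_0a_3a_1)a_4a_2a_0b_3=b_0b_2(a_4a_2a_0)b_3=b_0b_2b_4b_1b_3,\\
t_jt_it_j&=b_0a_2a_0(b_3b_0)a_2=b_0(a_2a_0a_3)a_1a_4a_2=b_0b_2b_4(a_1a_4a_2)=b_0b_2b_4b_1b_3.
\end{align*}
So both sides equal the loop running once around the pentagon of triangulations (the half twist). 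With this computation supplied, your argument is complete.
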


\begin{proof}
	The first relation follows from the following diagram:
	
	\begin{center}
		\input{3_Paper_S/Pictures/TB_square}
	\end{center}
	Since we are modding out the square relations the diagram commutes and following the graph first right then downwards is the same as first downwards and then right. Hence, $t_it_j = t_jt_i$.
	
	The second relation follows from this diagram:
	\begin{center}
		\input{3_Paper_S/Pictures/TB_pentagon}
	\end{center}
	This time the commutativity follows from modding out the pentagons. Hence, $t_it_jt_i=t_jt_it_j$.
\end{proof}

It turns out that the triangulation braid group is already the whole group:

\begin{prop}\label{prop:MS_pi1EGmodRel}
	The fundamental group of the quotient graph is generated by local twists. In other words
	\[
	\pi_1(\EG(\mathbf{S}),T)/\Rel(\mathbf{S}) = \TB(T) .
	\]
\end{prop}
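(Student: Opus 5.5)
The plan is to show that every closed path in $\EG(\mathbf{S})$ based at $T$ can be rewritten, modulo $\Rel(\mathbf{S})$, as a word in the local twists $t_i$ and their inverses. The approach follows §2.4 of \cite{KQ2}: reduce loops to the unoriented graph, then use Theorem~\ref{thm:MS_pi1EGmodRel_unoriented} to control what is lost in the passage.

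The first step is to conjugate everything back to $T$. For any triangulation $T'$ joined to $T$ by a path $p$, the local twists at $T'$ are conjugated by $p$ into elements of $\pi_1(\EG(\mathbf{S}),T)/\Rel(\mathbf{S})$. We need to show these lie in $\TB(T)$, and it suffices to check this for one forward flip $T\to T'=\mu_k T$, since the general case follows by induction along $p$. Let $x$ denote the forward flip at $e_k$.

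The local twists at $T'$ are then:
\begin{itemize}
\item at the new arc $e_k'$: $x\,t'_k\,x^{-1}=t_k$, since $t_k$ is exactly the two forward flips at $e_k$;
\item at an arc $e_j$ not adjacent to $e_k$: the square relation gives $x\,t'_j\,x^{-1}=t_j$;
\item at an arc $e_j$ sharing one triangle with $e_k$: the pentagon relation gives $x\,t'_j\,x^{-1}=t_k^{\pm1}t_jt_k^{\mp1}$, with the sign determined by orientation;
\item at an arc $e_j$ sharing two triangles with $e_k$: the hexagon relation gives an analogous conjugation formula.
\end{itemize}
I expect this case analysis, in particular getting the sign conventions right in the pentagon and hexagon diagrams, to be the main bookkeeping obstacle. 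It mirrors the diagrams already drawn in Lemma~\ref{lem:TB_braidrelation}.

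The second step is to write an arbitrary loop in these terms. Any edge of $\EG(\mathbf{S})$ is either a forward or a backward flip. A backward flip $T'\to T$ at $e_k'$ equals (forward flip $T'\to T$)$\cdot$(twist)$^{-1}$, using the $2$-cycle at that edge together with the twist. So every loop is equal, modulo twists conjugated along the path, to its image path made of "one chosen orientation per unoriented edge". By the first step, each such conjugated twist lies in $\TB(T)$. Concretely, define a section $s$ of $\EG(\mathbf{S})\to\underline{\EG}(\mathbf{S})$ by choosing the forward orientation of each edge. Then every loop $\ell$ satisfies $\ell\equiv s(\underline{\ell})\cdot w$ with $w\in\TB(T)$; $\TB(T)$ is normal by the first step, so the factors can be collected.

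The third step applies Theorem~\ref{thm:MS_pi1EGmodRel_unoriented}: $\underline{\ell}$ is a product of conjugates of square and pentagon loops in $\underline{\EG}(\mathbf{S})$. We must check that the lift under $s$ of each unoriented square or pentagon loop is, in $\pi_1\EG(\mathbf{S})/\Rel(\mathbf{S})$, an element of $\TB(T)$ (after conjugation). An unoriented square or pentagon, with its edges given forward orientations, differs from the oriented square or pentagon relation only by reversing some edges. Each reversal contributes a conjugated local twist, which lies in $\TB(T)$ by the first step. The hexagon loops project to trivial loops, which is consistent, since they are already killed in $\Rel(\mathbf{S})$.

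Combining the three steps shows that $\ell\in\TB(T)$, which is the claim of Proposition~\ref{prop:MS_pi1EGmodRel}.
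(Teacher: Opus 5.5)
Your proposal is correct and is essentially the paper's proof. Your first step is the conjugation formula (Proposition~\ref{prop:conjugation_formula}), and your second and third steps make explicit the exactness of $1\to\TB(T)\to\pi_1(\EG(\mathbf{S}),T)/\Rel(\mathbf{S})\to\pi_1(\underline{\EG}(\mathbf{S}),T)/\underline{\Rel}(\mathbf{S})\to 1$, which the paper only asserts before invoking Theorem~\ref{thm:MS_pi1EGmodRel_unoriented}.

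One harmless slip concerns $e_j$ and $e_k$ sharing one triangle: $x\,t'_j\,x^{-1}$ is a conjugate of $t_j$ by $t_k^{\pm1}$ in only one of the two orientations, and equals $t_j$ itself in the other. A related point: both edges of each $2$-cycle in $\EG(\mathbf{S})$ are forward flips, so your section $s$ should simply pick one of them. Neither affects your argument, which only needs membership in $\TB(T)$.
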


Before proving it, we first define the conjugation map on the fundamental groups.  Given a forward flip
$T^\bullet \xrightarrow{z} T^\circ$ in $\EG(\mathbf{S})$. Conjugation by $z$ yields
\begin{align*}
	\ad_z:\ \pi_1(\EG(\mathbf{S}),T^\circ)/\Rel(\mathbf{S})&\xrightarrow{\ \sim\ }\pi_1(\EG(\mathbf{S}),T^\bullet)/\Rel(\mathbf{S}),\\
	t &\mapsto z^{-1}tz.
\end{align*}
Locally we may identify the edge sets of $T^\circ$ and $T^\bullet$ and denote this common set by $T_0$.  
We write $\{t^\circ_\ell\mid \ell\in T_0\}$ and
$\{t^\bullet_\ell\mid \ell\in T_0\}$ for the local twists generating $\TB(T^\circ)$ and $\TB(T^\bullet)$, respectively.
In Proposition~\ref{prop:conjugation_formula} below we describe explicitly how $\ad_z$ acts on these generators and show that it induces an isomorphism on the triangulation braid groups.

\begin{prop}[Conjugation formula]\label{prop:conjugation_formula}
	Suppose $z$ is the forward flip at the edge $k\in T_0$.
	Then, for any $\ell\in T_0$,
	\[
	\ad_z\!\left(t^\circ_\ell\right)\ =
	\begin{cases}
		\ (t^\bullet_k)^{-1}\,t^\bullet_\ell\,t^\bullet_k, & \text{if $\ell$ precedes $k$ clockwise in a common triangle of $T^\circ$;}\\[4pt]
		\ t^\bullet_\ell, & \text{otherwise.}
	\end{cases}
	\]
	In particular, $\ad_z$ restricts to an isomorphism $\TB(T^\circ)\xrightarrow{\ \sim\ }\TB(T^\bullet)$.
\end{prop}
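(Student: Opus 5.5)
We argue locally around the two arcs $k$ and $\ell$, as in the proof of Lemma~\ref{lem:TB_braidrelation}. The idea is to express both $\ad_z(t^\circ_\ell)$ and the claimed right-hand side as closed paths at $T^\bullet$ in $\EG(\mathbf{S})$. Both paths stay inside the subgraph of triangulations that agree with $T^\circ$ away from the union of triangles adjacent to $k$ and $\ell$. We then show that the two paths differ by square, pentagon and hexagon loops, which are trivial in $\pi_1\EG(\mathbf{S})/\Rel(\mathbf{S})$. Since $k$ and $\ell$ are two arcs of $T^\circ$, their relative position is one of the configurations in the table defining the relations: they share no triangle, they share exactly one triangle, or they share two triangles. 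The case $\ell=k$ is treated separately.

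\emph{Case $\ell=k$.} The two triangles adjacent to $k$ form a quadrilateral. Two successive forward flips of a diagonal return the original diagonal. Hence $t^\bullet_k=z\,z'$ and $t^\circ_k=z'\,z$, where $z'\colon T^\circ\to T^\bullet$ is the forward flip at $k$. Conjugating $t^\circ_k$ by $z$ then gives $t^\bullet_k$ directly in $\pi_1\EG(\mathbf{S})$, with no relation needed. This is the ``otherwise'' case.

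\emph{Case: $k$ and $\ell$ share no triangle.} The flips at $k$ and at $\ell$ commute in the sense of the square relation. Pasting two copies of the square (one for each forward flip in $t^\circ_\ell$) along the edge $z$ shows that $z$ transports the loop $t^\circ_\ell$ to $t^\bullet_\ell$. This gives $\ad_z(t^\circ_\ell)=t^\bullet_\ell$.

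\emph{Case: $k$ and $\ell$ share exactly one triangle.} Here the five triangulations of the pentagon spanned by $k$ and $\ell$ enter, together with their oriented flips. We split according to whether $\ell$ precedes or follows $k$ clockwise in the common triangle of $T^\circ$.

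If $\ell$ follows $k$, the forward flip at $k$ does not change the pentagon-neighbourhood of $\ell$ in the relevant direction. A diagram built from one pentagon relation and its rotated copy then shows $\ad_z(t^\circ_\ell)=t^\bullet_\ell$.

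If $\ell$ precedes $k$, we write $t^\bullet_k$, $t^\bullet_\ell$ and $z^{-1}t^\circ_\ell z$ as paths around the pentagon and fill the difference with pentagon loops. This should yield $(t^\bullet_k)^{-1}t^\bullet_\ell t^\bullet_k$. It is the same computation as the diagram for $t_it_jt_i=t_jt_it_j$ in Lemma~\ref{lem:TB_braidrelation}, read with one flip shifted by $z$.

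\emph{Case: $k$ and $\ell$ share two triangles.} The union of the two triangles is an annulus-type configuration, and $\ell$ precedes $k$ clockwise in one of the common triangles. We decompose the paths using the hexagon loop $(yx^2)^{-1}x^2y$ together with squares. The claim is that this yields the conjugated form $(t^\bullet_k)^{-1}t^\bullet_\ell t^\bullet_k$.

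I expect this hexagon case, together with the orientation bookkeeping in the pentagon case, to be the main obstacle. One must match the clockwise convention with the direction of forward flips, and check that the hexagon relation produces exactly one conjugation by $t^\bullet_k$ rather than its inverse. I would first draw the six triangulations explicitly and label each edge by the arc being flipped, and only then compare words.

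Finally, the formula shows $\ad_z(\TB(T^\circ))\subseteq \TB(T^\bullet)$. Applying the same analysis to the inverse flip, that is, to $\ad_{z^{-1}}$, gives the reverse inclusion. Since $\ad_z$ is already an isomorphism of the quotient groups, it restricts to an isomorphism $\TB(T^\circ)\xrightarrow{\sim}\TB(T^\bullet)$.
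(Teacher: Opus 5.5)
Your strategy matches the paper's: verify the formula by local diagram chasing in $\pi_1\EG(\mathbf{S})/\Rel(\mathbf{S})$. The paper splits only by whether there are arrows from $\ell$ to $k$, using a square-type diagram in one case and a pentagon-type diagram in the other. You split instead by the number of common triangles. Your cases $\ell=k$, disjoint arcs, and the two pentagon subcases are fine at the level of detail the paper itself gives.

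The genuine gap is the case where $k$ and $\ell$ share two triangles. There you assert that $\ell$ precedes $k$ clockwise in one of the common triangles, and conclude the conjugated form. This is not automatic. On an unpunctured surface, the two common triangles induce the \emph{same} precedence. If the precedence were opposite, both corners between $k$ and $\ell$ would sit at the same ends of the two arcs. Together they would fill a full neighbourhood of that marked point, making it an interior puncture. So the configuration is a double arrow, not a cancelling pair.

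Hence for one of the two orderings of the pair, $k$ precedes $\ell$ in both triangles, and the statement prescribes $t^\bullet_\ell$. In fact the hexagon loop $(yx^2)^{-1}x^2y$ says exactly that the twist $x^2$ at one arc commutes past the flip $y$ at the other. So it is an instance of the second branch, not the first. The two branches really differ: for the annulus with one marked point on each boundary component, the exchange graph is a doubled line and the only relations are hexagons. The quotient is then free on the two local twists. So in that orientation your claimed identity would be false, not merely unproved.

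To repair this, split the two-triangle case by its common orientation, as you already do for pentagons. One clean way handles all ``precedes'' branches at once. Let $z'\colon T^\circ\to T^\bullet$ be the forward flip at $k$ in $T^\circ$. Then $t^\bullet_k=z'z$, and $\ad_z\circ\ad_{z'}$ is conjugation by $t^\bullet_k$. Flipping $k$ reverses the precedence between $k$ and $\ell$, so the first branch for $z$ is the second branch for $z'$. Then $\ad_{z'}(t^\bullet_\ell)=t^\circ_\ell$ yields $\ad_z(t^\circ_\ell)=(t^\bullet_k)^{-1}t^\bullet_\ell t^\bullet_k$. You only have to verify second-branch identities:
\begin{itemize}
\item squares when the arcs are disjoint;
\item pentagons when they share one triangle;
\item a single hexagon when they share two.
\end{itemize}

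A minor point: the formula does not apply to $\ad_{z^{-1}}$, since $z^{-1}$ is a backward flip. You do not need it. The image of $\TB(T^\circ)$ contains $\ad_z(t^\circ_k)=t^\bullet_k$, hence every $t^\bullet_\ell$ in either branch, so surjectivity onto $\TB(T^\bullet)$ is immediate.
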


\begin{proof}
	We only need to show the formula. Consider the case that there are no arrows from $l$ to $k$ first. This diagram  
	\begin{center}
		\input{3_Paper_S/Pictures/conj_form_square}
	\end{center}
	commutes in $\EG(\mathbf{S})/\Rel(\mathbf{S})$. Hence, $\ad_z(t^\circ_l) = z^{-1}t^\circ_\ell z = t^\bullet_l$. Now, suppose there are arrows from $l$ to $k$. 
	We are in the situation of the following diagram
	\begin{center}
		\input{3_Paper_S/Pictures/conj_form_penta}.
	\end{center}
	This commutes in $\EG(\mathbf{S})/\Rel(\mathbf{S})$. Hence, $\ad_z(t^\circ_l) = z^{-1}t^\circ_l z = (t_k^\bullet)^{-1}t^\bullet_l t_k^\bullet$.	
\end{proof}

We can now prove Proposition~\ref{prop:triangulationbraid}.

\begin{proof}[Proof of Proposition~\ref{prop:triangulationbraid}]
	Let $t\in\TB(T^\circ)$ and let $p:T\to T^\circ$ be a path in $\EG(\mathbf{S})$.
	If $p$ has length one, say $p=z$ is a forward flip, then by Proposition~\ref{prop:conjugation_formula} we have
	$$p^{-1}tp~=~\ad_z(t)~\in~\TB(T).$$
	By induction on the length of $p$ we conclude that $\TB(T)$ is a normal subgroup of $\pi_1(\EG(\mathbf{S},T)/\Rel(\mathbf{S})$ and that every conjugate of a local twist along any path lies in $\TB(T)$.
	
	Consider the canonical covering $\EG(\mathbf{S})\to \underline{\EG}(\mathbf{S})$ that forgets orientations of flips.
	Passing to fundamental groups and then to the quotient by $\Rel(\mathbf{S})$ yields a short exact sequence
	\[
	1\ \rightarrow\ \TB(T)\ \rightarrow\ \pi_1(\EG(\mathbf{S}),T)/\Rel(\mathbf{S})\ \rightarrow\ \pi_1(\underline{\EG}(\mathbf{S}),T)/\underline{\Rel}(\mathbf{S})\ \rightarrow\ 1.
	\]
	By Theorem~\ref{thm:MS_pi1EGmodRel_unoriented}, $\pi_1(\underline{\EG}(\mathbf{S}),T)/\underline{\Rel}(\mathbf{S})=1$.
	Hence $$\pi_1(\EG(\mathbf{S}),T)/\Rel(\mathbf{S})=\TB(T).$$
\end{proof}

\subsection{Exchange Graph of $\mathbf{S}_\Delta$}
We now pass from marked surfaces to decorated marked surfaces, where the interior decorating points record the simple zeroes of a quadratic differential.  
The presence of decorations enriches the combinatorics of flips: Although the basic notion of a triangulation remains unchanged, arcs do not homotope through the decorations.
As in the oriented case (see subsection~\ref{subsec:MS_EG}), the square, pentagon, and hexagon loops arise from pairs of arcs.

Any ideal triangulation of $\mathbf{S}$ consists of
$n = 6g - 6 + 3b + m$ open arcs—where $g$ is the genus, $b$ the number of 
boundary components, and $m$ the number of marked points—and divides 
$\mathbf{S}$ into $\aleph = (2n + m)/3$ triangles.

\begin{definition}\label{def:DMS_EG}
	Given a  decorated marked surface $\mathbf{S}_\Delta$, in other words, a marked surface
	$\mathbf{S}$ together with a fixed set $\Delta$ of $\aleph$ decorating points in the
	interior of $S$. 
	\begin{itemize}
		\item An \emph{open arc} in $\mathbf{S}_\Delta$ is (the isotopy class of) a simple
		essential curve in $\mathbf{S}_\Delta\setminus \Delta$ connecting two marked
		points in~$\mathbf{M}$.
		\item A \emph{triangulation} $\TT$ of $\mathbf{S}_\Delta$ is a maximal collection of
		pairwise compatible open arcs (i.e. no intersections in
		$\mathbf{S}\setminus\mathbf{M}$) that divides $\mathbf{S}_\Delta$ into $\aleph$
		triangles, each containing exactly one decoration in~$\Delta$.
		\item Forgetting the decorating points defines a map
		$F:\mathbf{S}_\Delta\to\mathbf{S}$, which induces a corresponding map between
		the sets of open arcs, since isotopy in $\mathbf{S}_\Delta$ implies isotopy in
		$\mathbf{S}$.
		\item The \emph{forward flip} of a triangulation $\TT$ is defined by moving the endpoints of an open arc
		$\gamma$ anticlockwise along the quadrilateral to obtain a new arc~$\gamma^\#$ (see Figure~\ref{fig:forwardflipo}).
		\item The \emph{exchange graph} $\EG(\mathbf{S}_\Delta)$ is the oriented graph whose vertices are triangulations of $\mathbf{S}_\Delta$ and whose edges
		correspond to forward flips. By $\EG^\TT(\mathbf{S}_\Delta)$ we mean the connected component containing $\TT$.
	\end{itemize}
\end{definition}

\begin{figure}[ht]
	\centering
	\input{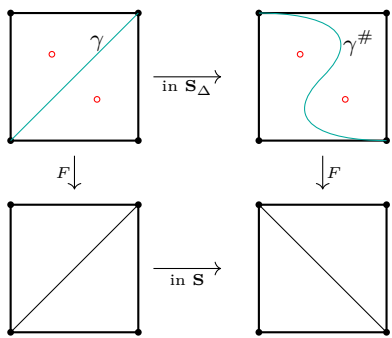}
	\caption{The forward flip in $\EG(\mathbf{S}_\Delta)$ and $\EG(\mathbf{S})$}
	\label{fig:forwardflipo}
\end{figure}

For any pair of arcs in a triangulation there are precisely three local configurations.
Each gives rise to a characteristic relation among the associated flips.

\medskip
\noindent\textbf{Relations in $\pi_1\EG(\mathbf{S_\Delta})$.} For decorated marked surfaces the same three types of loops occur in the exchange graph.   Each loop is again determined by a choice of two distinct arcs in a triangulation of $\mathbf{S}_\Delta$.  
If the arcs do not intersect in any triangle, we obtain a \emph{square relation}; if they intersect once in one triangle, we obtain a \emph{pentagon relation}; and if they intersect once in each of two triangles we obtain a \emph{hexagon relation}.  

\begin{figure}[h]
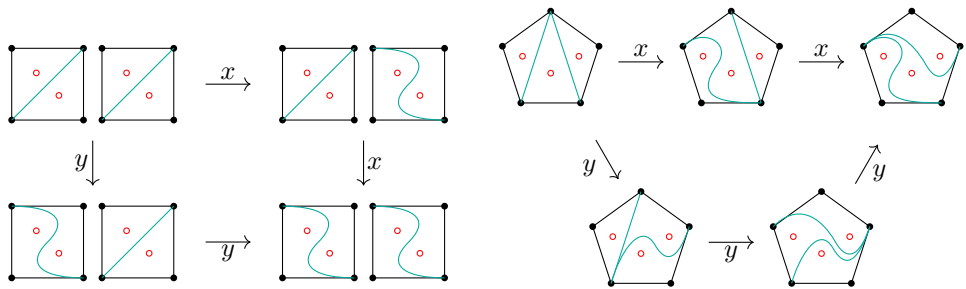
 
	\centering 
	\begin{minipage}{0.48\textwidth} 
		\centering 
		\input{3_Paper_S/Pictures/deco_square_relation} 
	\end{minipage}
	\hfill 
	\begin{minipage}{0.48\textwidth} 
		\centering 
		\input{3_Paper_S/Pictures/deco_pentagon_relation} 
	\end{minipage} 
	\caption{Left: Square relation,  Right: Pentagon relation} 
	\label{fig:dec_squareandpentagon}
\end{figure}

\begin{figure}[ht]
	\centering
	\input{3_Paper_S/Pictures/deco_hexagon_relation}
	\caption{Hexagon relation}
	\label{fig:dec_hexagon}
\end{figure}

\begin{center}
	\renewcommand{\arraystretch}{1.2}
	\begin{tabular}{c|c|c}
		Local geometry & Relation in $\pi_1\EG(\mathbf{S}_\Delta)$ & Shape in $\EG(\mathbf{S}_\Delta)$ \\ \hline
		Two arcs intersect in no triangle & $(y^2)^{-1} \cdot x^2$ & square: Figure~\ref{fig:dec_squareandpentagon} \\
		Two arcs intersect once in one triangle & $(y^3)^{-1} \cdot x^2$ & pentagon: Figure~\ref{fig:dec_squareandpentagon} \\
		Two arcs intersect once in each of two & $(y x^2)^{-1} \cdot x^2 y$ & hexagon: Figure~\ref{fig:dec_hexagon} \\ triangles
	\end{tabular}
\end{center}

\medskip

\begin{definition}\label{def:DMS_Rel}
	Let $\pi_1\EG^\TT(\mathbf{S}_\Delta)$ denote the fundamental group of the oriented exchange graph at a chosen base triangulation~$\TT$.  We denote by $$\Rel(\mathbf{S}_\Delta)\subseteq \pi_1\EG^\TT(\mathbf{S}_\Delta)$$ the normal subgroup generated by all square, pentagon and hexagon relations:
	\[
	\Rel(\mathbf{S}_\Delta) := \langle
	\text{square, pentagon, hexagon relations in } \EG^\TT(\mathbf{S}_\Delta)\rangle.
	\]
\end{definition}

\begin{rmk}
	Let $F:\EG^\TT(\mathbf{S}_\Delta)\to\EG(\mathbf{S})$ be the covering induced by forgetting the decorating points.  
	The three types of relations in the decorated exchange graph are precisely the lifts of the corresponding relations downstairs: every square, pentagon, and hexagon in $\EG^\TT(\mathbf{S}_\Delta)$ projects to, and arises as a lift of, a square, pentagon, or hexagon in $\EG(\mathbf{S})$.
	Consequently, the natural inclusion
	$\pi_1\EG^\TT(\mathbf{S}_\Delta)\hookrightarrow \pi_1\EG(\mathbf{S})$
	identifies the subgroups generated by these relations, that is,
	$$\Rel(\mathbf{S}_\Delta)\simeq\Rel(\mathbf{S}).$$
\end{rmk}

\subsection{Exchange Graph of $\sow$}\label{subsec:eg_mixed}

We now extend the exchange graph construction to weighted decorated marked surfaces. In this setting, each decorating point carries a weight, and a
$\mathbf{w}$–mixed–angulation specifies, for every decoration, a polygon whose
number of sides is determined by its weight.  
This notion, introduced in §3.3 in \cite{BMQS}, generalises triangulations on decorated marked surfaces.
The vertices of the weighted exchange graph are $\mathbf{w}$–mixed–angulations, and edges correspond to forward flips inside these $\mathbf{w}$–polygons.

Unlike in the unoriented, oriented, and decorated cases—where the local cycles
(square, pentagon, and hexagon relations) were already understood from \cite{KQ2}-the weighted setting requires a new analysis. Here we introduce a list of local loops for the first time.  
Besides the familiar square, pentagon, and hexagon relations, a second type of hexagon loop appears in the weighted setting, arising when two arcs intersect twice inside a single polygon.

\begin{definition} \label{def:wDSM_EG}
	Let $\mathbf{S}_{\mathbf{w}}=(\mathbf{S},\mathbf{M},\Delta,\mathbf{w})$ be a weighted decorated marked surface.
	
	\begin{itemize}
		\item A \emph{$\mathbf{w}$–mixed-angulation} $\MM$ of $\mathbf{S}_{\mathbf{w}}$ is an (isotopy class of) maximal collection of compatible (as defined in Definition~\ref{def:MS_triangulation}) open arcs dividing $\mathbf{S}$ into polygons, each polygon containing exactly one decorating point $Z\in\Delta$, and having precisely $\mathbf{w}(Z)$ sides.
		
		\item The \emph{forward flip} of a mixed–angulation is obtained, exactly as the forward flip of triangulation on a decorated marked surface: By moving the endpoints of an open arc $\gamma$ anticlockwise along the boundary of the $\mathbf{w}$–polygon containing~$\gamma$, producing a new arc $\gamma^\#$.
		
		\item The \emph{exchange graph} $\EG(\mathbf{S}_{\mathbf{w}})$ is the oriented graph whose vertices are $\mathbf{w}$–mixed-angulations and whose edges are forward flips. By $\EG^\circ(\sow)$ we mean a connected component.
	\end{itemize}
\end{definition}

\medskip
\noindent\textbf{Relations in $\pi_1\EG(\sow)$.} In the weighted exchange graph we have generalized versions of the three types of loops that occur for triangulations. Again each type is determined by a chosen pair of arcs. If the arcs do not intersect in any polygon, we obtain a \emph{square relation}; if they intersect once in one polygon, we obtain a \emph{pentagon relation}; and if they intersect once in each of two polygons we obtain a \emph{hexagon relation}.  
But now we have a \emph{second type of hexagon relation} when two arcs intersect twice inside a single polygon. 

We refer to these loops in $\EG(\sow)$ as the square, pentagon, and (second) hexagon relations, since we will quotient the exchange graph by the normal subgroup generated by them.

\begin{figure}[h]
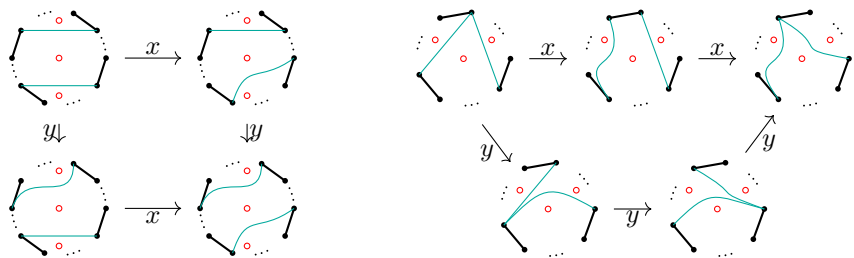
 
	\centering 
	\begin{minipage}{0.48\textwidth} 
		\centering 
		\input{3_Paper_S/Pictures/gen_square_relation_deco} 
	\end{minipage}
	\hspace{0.25em}
	\begin{minipage}{0.48\textwidth} 
		\centering 
		\input{3_Paper_S/Pictures/gen_pentagon_relation_deco} 
	\end{minipage} 
	\caption{Left: Square relation,  Right: Pentagon relation} 
	\label{fig:gen_squareandpentagon_deco}
\end{figure}

\begin{figure}[h]
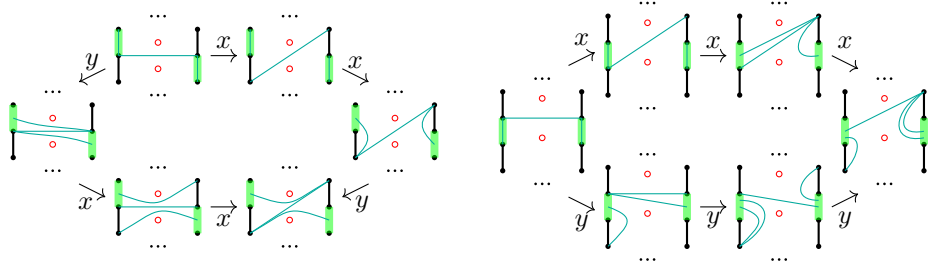
 
	\centering 
	\begin{minipage}{0.48\textwidth} 
		\centering 
		\input{3_Paper_S/Pictures/gen_hex1_relation_deco} 
	\end{minipage}
	\hspace{0.2cm} 
	\begin{minipage}{0.48\textwidth} 
		\centering 
		\input{3_Paper_S/Pictures/gen_hex2_relation_deco} 
	\end{minipage} 
	\caption{Left: First Hexagon relation,  Right: Second Hexagon relation} 
	\label{fig:gen_hexagons_deco}
\end{figure}

\begin{center}
	\renewcommand{\arraystretch}{1.2}
	\begin{tabular}{c|c|c}
		Local geometry & Relation in $\pi_1$ & Shape in $\EG(\sow)$ \\ \hline
		Two arcs intersect in no polygon & $(y^2)^{-1}\cdot x^2$ & square: Figure~\ref{fig:gen_squareandpentagon_deco} \\
		Two arcs intersect once in one polygon & $(y^3)^{-1} \cdot x^2$ & pentagon: Figure~\ref{fig:gen_squareandpentagon_deco} \\
		Two arcs intersect once in each of two  & $(y x^2)^{-1} \cdot x^2 y$ & hexagon: Figure~\ref{fig:gen_hexagons_deco}\\
		polygons & & \\
		Two arcs intersect twice in one polygon  & $(y^3)^{-1} \cdot x^3$ & hexagon: Figure~\ref{fig:gen_hexagons_deco} \\ 
	\end{tabular}
\end{center}

\begin{rmk}
	The first three relations are direct generalisations of the square, pentagon, and hexagon relations known from the triangulated case (Definition~\ref{def:MS_Rel}). The second hexagon relation, on the other hand, is new and arises only in the presence of at least one zero of order at least $2$.
\end{rmk}

\begin{definition}\label{def:wDMS_Rel}
	Let $\pi_1\EG^\circ(\sow)$ denote the fundamental group of a connected component of the exchange graph of $\mathbf{w}$-mixed-angulations.  We denote by $$\Rel(\sow)\subseteq \pi_1\EG^\circ(\sow)$$ the normal subgroup generated by all square and pentagon loops:
	\[
	\Rel(\sow) := \langle
	\text{square, pentagon, (second type) hexagon relations in } \EG(\sow)\rangle.
	\]
\end{definition}

\begin{definition}
	By forgetting the decorating points, we obtain a new exchange graph that records $\mathbf{w}$–mixed–angulations of the underlying undecorated surface. More precisely, forgetting $\Delta$ defines a covering
	$$F:\EG(\mathbf{S}_{\mathbf{w}})\longrightarrow \EG(\mathbf{S}, \mathbf{w}),$$
	where $\EG(\mathbf{S}, \mathbf{w})$ is the exchange graph whose vertices are $\mathbf{w}$–mixed–angulations of $\mathbf{S}$, considered up to isotopy through the decorating points~$\Delta$. In other words, arcs in $\EG(\mathbf{S}, \mathbf{w})$ may be homotoped through~$\Delta$. This generalises the covering $\EG^\TT(\mathbf{S}_\Delta)\to\EG(\mathbf{S})$ from the simple–weight case.
\end{definition}

Similarly as before we have four kinds of loops in the exchange graph.

\begin{figure}[h] 
	\centering 
	\begin{minipage}{0.48\textwidth} 
		\centering 
		\input{3_Paper_S/Pictures/gen_square_relation} 
	\end{minipage}
	\hspace{0.25em}
	\begin{minipage}{0.48\textwidth} 
		\centering 
		\input{3_Paper_S/Pictures/gen_pentagon_relation} 
	\end{minipage} 
	\caption{Left: Square relation,  Right: Pentagon relation} 
	\label{fig:gen_squareandpentagon}
\end{figure}

\begin{figure}[h]
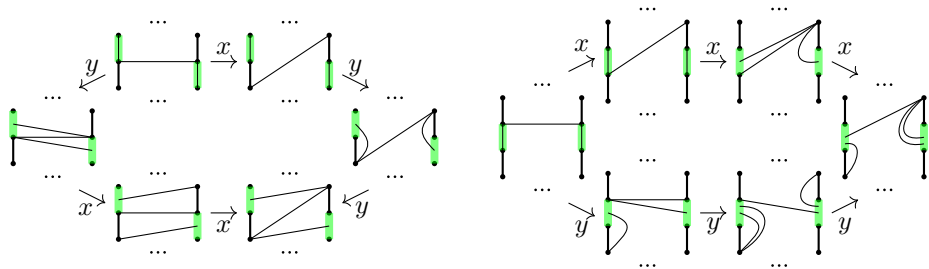
 
	\centering 
	\begin{minipage}{0.48\textwidth} 
		\centering 
		\input{3_Paper_S/Pictures/gen_hex1_relation} 
	\end{minipage}
	\hspace{0.2cm} 
	\begin{minipage}{0.48\textwidth} 
		\centering 
		\input{3_Paper_S/Pictures/gen_hex2_relation} 
	\end{minipage} 
	\caption{Left: First hexagon type,  Right: Second hexagon type} 
	\label{fig:gen_hexagons}
\end{figure}

\begin{center}
	\renewcommand{\arraystretch}{1.2}
	\begin{tabular}{c|c|c}
		Local geometry & Relation in $\pi_1$ & Shape in $\EG(\mathbf{S}, \mathbf{w})$ \\ \hline
		Two arcs intersect in no polygon & $(y^2)^{-1}\cdot x^2$ & square: Figure~\ref{fig:gen_squareandpentagon} \\
		Two arcs intersect once in one polygon & $(y^3)^{-1} \cdot x^2$ & pentagon: Figure~\ref{fig:gen_squareandpentagon} \\
		Two arcs intersect once in each of two polygons & $(y x^2)^{-1} \cdot x^2 y$ & hexagon: Figure~\ref{fig:gen_hexagons}\\
		Two arcs intersect twice in one polygon  & $(y^3)^{-1} \cdot x^3$ & hexagon: Figure~\ref{fig:gen_hexagons} \\ 
	\end{tabular}
\end{center}

\medskip

We define the local relations in $\EG(\mathbf{S}, \mathbf{w})$ as the images under~$F$ of the corresponding relations in $\EG(\mathbf{S}_{\mathbf{w}})$. 
Each square, pentagon, and hexagon relation in $\EG(\mathbf{S}_{\mathbf{w}})$ therefore projects to a relation in $\EG(\mathbf{S}, \mathbf{w})$, and every relation on the base arises in this way. See Figure~\ref{fig:gen_squareandpentagon} and Figure~\ref{fig:gen_hexagons}. The normal subgroup generated by these relations will be denoted by $\Rel(\mathbf{S}, \mathbf{w})$.

\begin{rmk}
	The natural inclusion
	$$\pi_1\EG(\mathbf{S}_{\mathbf{w}})\hookrightarrow \pi_1\EG(\mathbf{S}, \mathbf{w})$$
	induces a natural isomorphism between the corresponding relation subgroups:
	$$\Rel(\mathbf{S}_{\mathbf{w}})\simeq \Rel(\mathbf{S}, \mathbf{w}).$$
\end{rmk}

\subsection{Braid Twist Group}

In this subsection our aim is to establish two statements that will be needed later.
The first concerns the injective homomorphism
\[
F_*:\;
\pi_1\EG^\TT(\mathbf{S}_\Delta)/\Rel(\mathbf{S}_\Delta)
\longrightarrow
\pi_1\EG(\mathbf{S})/\Rel(\mathbf{S}),
\]
induced by the forgetful map
\(F:\EG^\TT(\mathbf{S}_\Delta)\rightarrow \EG(\mathbf{S})\).
We introduce the \emph{braid twist group} and show (Lemma~\ref{lem:BT_is_quotient}) that this is the quotient of $F_*$.

The second goal is to compare the braid twist group with the triangulation braid group introduced in Defintion~\ref{def:triangulationbraidgroup}.  
We prove (Proposition~\ref{prop:triangulationbraid}) that all relations occurring in the braid twist group also hold in the triangulation braid group, which will be essential later when we relate the exchange graphs to the fundamental group of the moduli space $\FQuad(\mathbf{S}_\Delta)$.

Within this triangulation context and avoiding cluster theory we verify the first six relations of Proposition~\ref{prop:triangulationbraid}, whereas the seventh relation is unclear and remains unproven here.
This does not weaken the overall mathematical narrative: in the cluster setting all seven relations are known to hold, and the triangulation braid group is simply a special case.

We follow §3.5-§3.6 in \cite{KQ2} closely, except for Proposition~\ref{prop:triangulationbraid}. In that step, we replace the cluster braid group of \cite{KQ2} by our purely topological notion of the triangulation braid group, in order to avoid any cluster-theoretic machinery. This requires a modification of the original argument, but the resulting statement remains formally analogous to its cluster-theoretic counterpart.

\begin{definition}
	Let $\mathbb{T}$ be a triangulation of $\mathbf{S}_\Delta$ consisting of $n$ open arcs.
	The \emph{dual graph} $\mathbb{T}^*$ is the collection of $n$ closed arcs in $\mathbf{S}_\Delta$ such that each closed arc intersects exactly one open arc of $\mathbb{T}$ once and no others. 
	\begin{figure}[ht]
		\centering
		\input{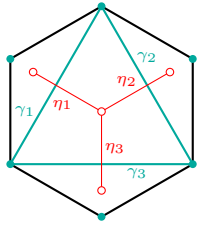}
		\caption{The dual graph of a triangulation.}
		\label{fig:dual_triangulation}
	\end{figure}
	More precisely, for each open arc $\gamma\in\mathbb{T}$, the dual closed arc $\eta\in\mathbb{T}^*$ lies in the quadrilateral with diagonal~$\gamma$, connecting the two decorating points in that quadrilateral and intersecting $\gamma$ exactly once (see Figure~\ref{fig:dual_triangulation}).
\end{definition}

\begin{definition}
	For any closed arc $\eta$ in $\mathbf{S}_\Delta$, there is the (positive) \emph{braid twist}
	$B_\eta\in\MCG(\mathbf{S}_\Delta)$ along~$\eta$ shown in
	Figure~\ref{fig:braidtwist}.
	The \emph{braid twist group} $$\BT(\mathbf{S}_\Delta)$$ is the subgroup of $\MCG(\mathbf{S}_\Delta)$ generated by all such twists.
	Given a triangulation $\mathbb{T}$, we write
	$$\BT(\mathbb{T})=\langle B_\eta\mid\eta\in\mathbb{T}^*\rangle$$ for the generating set
	associated with its dual arcs (cf.~\cite[Lem.~4.2]{qiu2016decorated}).
\end{definition}

\begin{figure}[ht]
	\centering
	\input{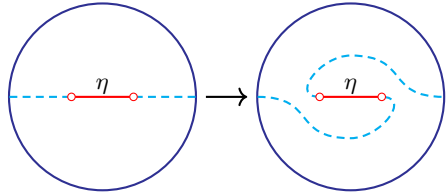}
	\caption{The braid twist $B_{\eta}$}
	\label{fig:braidtwist}
\end{figure}

\begin{lemma}\label{lem:BT_is_quotient}
	Let $F:\EG^\TT(\mathbf{S}_\Delta)\to \EG(\mathbf{S})$ be the forgetful map.
	Then the induced inclusion
	\[
	\pi_1\EG^\TT(\mathbf{S}_\Delta)/\Rel(\mathbf{S}_\Delta)
	\hookrightarrow
	\pi_1\EG(\mathbf{S})/\Rel(\mathbf{S})
	\]
	has quotient canonically isomorphic to the twisting group
	$\BT(\mathbf{S}_\Delta)$.
	Equivalently,
	\[
	\frac{\pi_1\EG(\mathbf{S})/\Rel(\mathbf{S})}{
		\pi_1\EG^\TT(\mathbf{S}_\Delta)/\Rel(\mathbf{S}_\Delta)
	}
	\;\cong\;
	\BT(\mathbf{S}_\Delta).
	\]
\end{lemma}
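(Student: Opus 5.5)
Since $F\colon\EG^\TT(\mathbf{S}_\Delta)\to\EG(\mathbf{S})$ is a covering of graphs, I would work with its deck group, which acts freely on the fibres and compares naturally with the mapping class group. The first step makes the covering explicit. The fibre over a triangulation $T$ of $\mathbf{S}$ consists of the decorated triangulations $\TT$ with $F(\TT)=T$. Any two of these differ by an element of $\ker F_M=\B_n(\mathbf{S})$ (Lemma~\ref{lem:kernel_F}), namely by moving the decorations inside the triangles. The decorated triangulations that lie in the chosen component $\EG^\TT(\mathbf{S}_\Delta)$ are exactly those reached from $\TT$ by an element of $\BT(\mathbf{S}_\Delta)$; this is the content of \cite[Lem.~4.2]{qiu2016decorated}, which gives $\BT(\mathbf{S}_\Delta)=\BT(\TT)$. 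So $\BT(\mathbf{S}_\Delta)$ acts on $\EG^\TT(\mathbf{S}_\Delta)$ by deck transformations, freely and transitively on fibres. The covering is therefore regular (Galois) with group $\BT(\mathbf{S}_\Delta)$. The concrete mechanism is that the lift of the local twist $t_i$ at the arc $e_i$ starts at $\TT$ and ends at $B_{\eta_i}^{\pm1}(\TT)$, where $\eta_i\in\TT^*$ is the closed arc dual to $e_i$. I would check this directly on the quadrilateral with diagonal $e_i$: flipping twice forward moves the arc endpoints once around and swaps the two decorations, which is exactly a half twist along $\eta_i$.

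Regularity of the covering gives a short exact sequence
\[
1\to\pi_1\EG^\TT(\mathbf{S}_\Delta)\xrightarrow{F_*}\pi_1\EG(\mathbf{S})\xrightarrow{\rho}\BT(\mathbf{S}_\Delta)\to1 .
\]
Here $\rho$ sends a loop at $T$ to the deck transformation carrying $\TT$ to the endpoint of its lift.

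Next I would pass to the quotients by the relation subgroups. By the Remark in the decorated subsection, every square, pentagon and hexagon loop downstairs lifts to a closed loop upstairs, so $\Rel(\mathbf{S})$ lies in the image of $F_*$, hence in $\ker\rho$. Moreover the same Remark says $F_*$ carries $\Rel(\mathbf{S}_\Delta)$ onto $\Rel(\mathbf{S})$. One point needs care: $\Rel(\mathbf{S}_\Delta)$ is normal in $\pi_1\EG^\TT(\mathbf{S}_\Delta)$, and I need its image to be normal in the larger group $\pi_1\EG(\mathbf{S})$. This holds because conjugating a relation loop by a path downstairs gives a loop whose lift is a conjugate of a relation loop based at a different point of the fibre. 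Such a loop still lies in the component, since that component is $\BT$-invariant, so the image $F_*\Rel(\mathbf{S}_\Delta)$ equals the normal closure $\Rel(\mathbf{S})$.

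Dividing the exact sequence by $\Rel(\mathbf{S}_\Delta)\cong\Rel(\mathbf{S})$ then yields an injection of $\pi_1\EG^\TT(\mathbf{S}_\Delta)/\Rel(\mathbf{S}_\Delta)$ into $\pi_1\EG(\mathbf{S})/\Rel(\mathbf{S})$ with cokernel $\BT(\mathbf{S}_\Delta)$. The identification is canonical, given by $t_i\mapsto B_{\eta_i}$ at the base triangulation.

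The main obstacle is the first step: showing that the deck group of the component is exactly $\BT(\mathbf{S}_\Delta)$, and not some larger subgroup of $\B_n(\mathbf{S})$. I would handle it by combining the lift computation for local twists with Proposition~\ref{prop:MS_pi1EGmodRel}. That proposition says every loop downstairs is, modulo relations, a product of local twists. Since relation loops lift to closed loops (they lie in $\ker\rho$), $\rho$ factors through $\pi_1\EG(\mathbf{S})/\Rel(\mathbf{S})$, and its image is generated by the $B_{\eta_i}$, which is exactly $\BT(\TT)=\BT(\mathbf{S}_\Delta)$.
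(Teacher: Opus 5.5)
Your proposal is correct and follows essentially the same route as the paper. In both, the map to $\BT(\mathbf{S}_\Delta)$ is the endpoint-of-lift map, Proposition~\ref{prop:MS_pi1EGmodRel} plus the fact that a lifted local twist ends at a braid-twisted triangulation shows the image is $\BT(\mathbf{S}_\Delta)$, and the kernel is the image of $F_*$. Your covering-space packaging (regular covering first, then dividing by $F_*\Rel(\mathbf{S}_\Delta)=\Rel(\mathbf{S})$) only makes explicit what the paper's proof leaves implicit: well-definedness on the quotient, freeness of the action, and normality (the paper's convention is $t_i\mapsto B_{\eta_i}^{-1}$).
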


\begin{proof}
	Let $T:= F(\TT)$ the triangulation of the undecorated surface.
	Let $\mathcal{F}$ be the set of all decorated lifts of $T$ in
	$\EG^\TT(\mathbf{S}_\Delta)/\Rel(\mathbf{S}_\Delta)$.
	The group $\BT(\mathbf{S}_\Delta)$ does not alter the underlying triangulation of $\mathbf{S}$, hence acts on $\mathcal{F}$.
	Take any $p \in \pi_1\EG(\mathbf{S})/\Rel(\mathbf{S})$ and take its lift $\hat{p} : \TT \rightarrow \TT'$ to some $\TT' \in \cal{F}$.
	By Proposition~\ref{prop:MS_pi1EGmodRel}, $p$ can be expressed (up to homotopy) as a product of local twists at $T$. Lifted to $\EG^\TT(\mathbf{S}_\Delta)/\Rel(\mathbf{S}_\Delta)$, these local twists become braid twists $\BT(\mathbf{S}_\Delta)$ mapping $\TT$ to $\TT'$. Hence, we get a map $$\Phi: \pi_1\EG(\mathbf{S})/\Rel(\mathbf{S}) \rightarrow \BT(\mathbf{S}_\Delta).$$
	This map is surjective since every $b\in\BT(\mathbf{S}_\Delta)$ sends $\TT$ to some $\TT'\in\mathcal{F}$, which is the endpoint of the lift of an appropriate loop in $\EG(\mathbf{S})/\Rel(\mathbf{S})$.
	
	Finally, $p \in\ker\Phi$ iff its lift $p'$ starting at $\TT$ ends again at $\TT$, i.e. iff $p'$ is represented by a loop in $\EG^\TT(\mathbf{S}_\Delta)/\Rel(\mathbf{S}_\Delta)$.
	This is equivalent to $p \in\operatorname{Im}(F_*)$.
	Thus $\ker\Phi=\operatorname{Im}(F_*)$, and the claim follows from the first isomorphism theorem.
\end{proof}

We now compare:
\begin{itemize}
	\item the \emph{triangulation braid group} $\TB(T)$, the subgroup of $\pi_1(\EG(\mathbf{S}),T)$ generated by local twists, and
	\item the \emph{braid twist group} $\BT(\TT)$, generated by braid twists along closed
	arcs dual to a decorated triangulation~$\TT$,
\end{itemize}
using an explicit finite presentation of $\BT(\TT)$ from~\cite{KQ2} and verifying that
the same relations hold in $\TB(T)$ (see Proposition~\ref{prop:triangulationbraid}).

We now recall the presentation of the abstract braid group associated with a fixed triangulation~$\TT$ on $\mathbf{S}_\Delta$.

\begin{definition}\label{def:braidgroup}
	Let $\TT$ be a triangulation of $\mathbf{S}_\Delta$.
	The \emph{braid group} $\Br(\TT)$ is defined as follows:
	its generators are the open arcs in $\TT$, and the defining relations are
	\begin{itemize}
		\item[$1^\circ$.] $\Co(a,b)$, i.e.\ $ab=ba$, if $a$ and $b$ do not share a common triangle;
		\item[$2^\circ$.] $\Br(a,b)$, i.e.\ $aba=bab$, if $a$ and $b$ share exactly one common triangle;
		\item[$3^\circ$.] $\Co(a^b,c)$, i.e.\ $\Co(b^{-1}ab,c)$, in the configuration of the first diagram
		in Figure~\ref{braid-triang};
		\item[$4^\circ$.] $\Br(a^b,c)$ in the configuration of the second diagram in
		Figure~\ref{braid-triang};
		\item[$5^\circ$.] $\Co(c^{ae},b)$ in the configuration of the third diagram in
		Figure~\ref{braid-triang};
		\item[$6^\circ$.] $\Br(c^{ae},b)$ and $\Br(c^{ea},b)$ in the configuration of the
		fourth diagram in Figure~\ref{braid-triang};
		\item[$7^\circ$.] $\Br(c^{ae},b)$, $\Br(c^{ea},b)$ and $\Co(e,f^{abc})$ in the
		configuration of the fifth diagram in Figure~\ref{braid-triang}.
	\end{itemize}
\end{definition}

\begin{figure}
	\begin{center}
		\begin{tikzpicture}[scale=0.7, every node/.style={scale=0.7}]
			\node (s)   [regular polygon, regular polygon sides=5, draw,
			minimum size=33mm] {};
			\draw (s.corner 1) -- (s.corner 3);
			\draw (s.corner 1) -- (s.corner 4);
			\node at (-0.75,0) {$a$};
			\node at (0.75,0) {$b$};
			\node at (0,-1.5) {$c$};
		\end{tikzpicture}
		\begin{tikzpicture}[scale=0.7, every node/.style={scale=0.7}]
			\node (s)   [regular polygon, regular polygon sides=5, draw,
			minimum size=33mm] {};
			\draw (s.corner 1) -- (s.corner 3);
			\draw (s.corner 1) -- (s.corner 4);
			\node at (-0.75,0) {$a$};
			\node at (0.75,0) {$b$};
			\node at (0,-1.5) {$c$};
			\node at (1,1.2) {$c$};
		\end{tikzpicture}
		\begin{tikzpicture}[scale=0.7, every node/.style={scale=0.7}]
			\node (s)   [regular polygon, regular polygon sides=5, draw,
			minimum size=33mm] {};
			\draw (s.corner 1) -- (s.corner 3);
			\draw (s.corner 1) -- (s.corner 4);
			\node at (-0.75,0) {$a$};
			\node at (0.75,0) {$b$};
			\node at (0,-1.5) {$c$};
			\node at (1,1.2) {$c$};
			\node at (1.5,-0.3) {$e$};
		\end{tikzpicture}
		\begin{tikzpicture}[scale=0.7, every node/.style={scale=0.7}]
			\node (s)   [regular polygon, regular polygon sides=5, draw,
			minimum size=33mm] {};
			\draw (s.corner 1) -- (s.corner 3);
			\draw (s.corner 1) -- (s.corner 4);
			\node at (-0.75,0) {$a$};
			\node at (0.75,0) {$b$};
			\node at (0,-1.5) {$c$};
			\node at (1,1.2) {$c$};
			\node at (1.5,-0.3) {$e$};
			\node at (-1.5,-0.3) {$e$};
		\end{tikzpicture}
		\begin{tikzpicture}[scale=0.7, every node/.style={scale=0.7}]
			\node (s)   [regular polygon, regular polygon sides=5, draw,
			minimum size=33mm] {};
			\draw (s.corner 1) -- (s.corner 3);
			\draw (s.corner 1) -- (s.corner 4);
			\node at (-0.75,0) {$a$};
			\node at (0.75,0) {$b$};
			\node at (0,-1.5) {$c$};
			\node at (1,1.2) {$c$};
			\node at (-1,1.2) {$f$};
			\node at (1.5,-0.3) {$e$};
			\node at (-1.5,-0.3) {$e$};
		\end{tikzpicture}
	\end{center}
	\caption{Five cases triangulations} \label{braid-triang}
\end{figure}
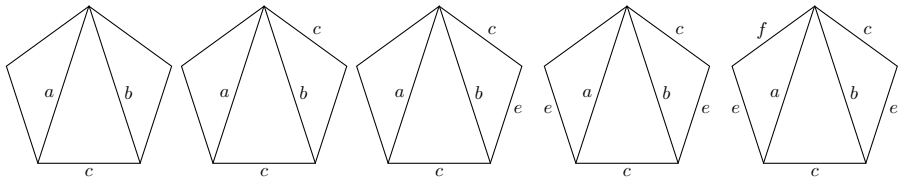

The following theorem can be found in \cite{KQ2} as Theorem~3.15. 
\begin{theorem}\label{thm:Braid}
	The braid twist group $\BT(\TT)$ is isomorphic to the
	braid group $\Br(\TT)$, where the braid twist $B_\eta$ corresponds to the dual open arc
	of $\eta$ in $\TT$, for any $\eta \in \TT^*$. Thus we obtain an explicit finite presentation of $\BT(\TT)$.
\end{theorem}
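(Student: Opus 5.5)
The plan is to construct an explicit homomorphism $\Phi\colon\Br(\TT)\to\BT(\TT)$ that sends each open arc $\gamma\in\TT$ to the braid twist $B_\eta$ along its dual closed arc $\eta\in\TT^*$, and then to show that $\Phi$ is bijective.

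\emph{Well-definedness.} We check that relations $1^\circ$--$7^\circ$ of Definition~\ref{def:braidgroup} hold among braid twists in $\MCG(\mathbf{S}_\Delta)$. The one tool needed is the conjugation identity
\[
B_\zeta^{-1}B_\eta B_\zeta = B_{B_\zeta^{-1}(\eta)},
\]
together with two standard facts: twists along disjoint closed arcs commute, and twists along closed arcs that meet in exactly one common endpoint (with disjoint interiors) satisfy the braid relation. The relations then reduce as follows.
\begin{itemize}
\item $1^\circ$: if $a,b$ share no triangle, their duals are disjoint, so the twists commute.
\item $2^\circ$: if $a,b$ share exactly one triangle, their duals meet only at the decoration of that triangle, so the twists braid.
\item $3^\circ$--$7^\circ$: each conjugated generator $a^b$, $c^{ae}$, $c^{ea}$, $f^{abc}$ is again a braid twist along an explicit closed arc, namely the image of the dual arc under the conjugating twists. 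In each configuration of Figure~\ref{braid-triang}, we draw this arc inside the pentagon (or the annulus or torus piece formed by identifying $c$ and $e$). We then read off that it is either disjoint from the other dual arc or meets it in a single endpoint, which gives $\Co$ or $\Br$ respectively.
\end{itemize}
This step is a finite amount of picture-chasing.

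\emph{Surjectivity.} This follows because $\BT(\mathbf{S}_\Delta)$ is generated by the twists along the dual arcs of any fixed triangulation, by \cite[Lem.~4.2]{qiu2016decorated}. So the image of $\Phi$ contains every generator.

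\emph{Injectivity (the main obstacle).} I would first show that $\Br(\TT)$ is independent of $\TT$ in a way compatible with flips. For a forward flip $\TT\to\TT^\#$ at an arc $k$, define $\Br(\TT)\to\Br(\TT^\#)$ by the analogue of the conjugation formula of Proposition~\ref{prop:conjugation_formula}: $a\mapsto k^{-1}ak$ if $a$ precedes $k$ clockwise, and $a\mapsto a$ otherwise. We then check that relations $1^\circ$--$7^\circ$ are carried to consequences of the relations of $\Br(\TT^\#)$, and that these maps intertwine with $\Phi$, using that the dual arcs change precisely by the corresponding twist. Consequently $\ker\Phi$ is invariant under all flips, so it suffices to prove injectivity for one convenient triangulation.

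For that triangulation, I would compare with a known presentation of the surface braid group $\B_n(\mathbf{S})$ (for instance Bellingeri's, via Lemma~\ref{lem:kernel_F}), and identify $\BT$ inside it as the subgroup generated by the "local" half-twists. Applying Tietze transformations should show that the induced relations are exactly $1^\circ$--$7^\circ$. The delicate part is relation $7^\circ$ and the other global relations, which come from genus or boundary-component loops. If the direct Tietze comparison becomes unmanageable, I would instead build a $\Br(\TT)$-equivariant simply connected complex, such as the one obtained by gluing squares, pentagons and hexagons onto the decorated exchange graph. I would then recover the presentation from the stabilizers, in the spirit of the proof of Theorem~\ref{thm:MS_pi1EGmodRel_unoriented}.
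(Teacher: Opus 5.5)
The paper gives no proof of this statement. It is quoted from \cite{KQ2} (Theorem~3.15 there, going back to Qiu--Zhou), so your argument has to stand on its own. Your well-definedness and surjectivity steps are fine in outline once conventions are fixed. For the three dual arcs at the decoration of a triangle with sides $a,b,c$, only one of $B_{\eta_b}^{\pm1}(\eta_a)$ is disjoint from $\eta_c$, so the sign in $a^b$ matters.

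The genuine gap is injectivity for your convenient triangulation $\TT_0$, which is the whole content of the theorem, and the mechanism you propose cannot deliver it. Tietze transformations never change the group being presented. Bellingeri's presentation is of $\B_n(\mathbf{S})$, but $\BT(\mathbf{S}_\Delta)$ is a proper subgroup of infinite index as soon as $S$ is not a disc. Indeed, summing the homology classes of the strands gives a homomorphism $\B_n(\mathbf{S})\to H_1(S;\Z)\cong\Z^{2g+b-1}$. It kills every braid twist, since the two strands of a twist bound a disc around $\eta$, but it is surjective via point-pushing. A presentation of such a subgroup needs one of two things:
\begin{itemize}
\item Reidemeister--Schreier, which produces an infinite presentation, followed by an actual reduction to $1^\circ$--$7^\circ$; or
\item an extension argument that rebuilds a presentation of $\B_n(\mathbf{S})$ from $\Br(\TT_0)$ together with point-pushing generators and their conjugation action.
\end{itemize}
Already for the annulus, where $\B_n(\mathbf{S})$ is the Artin group of type $B_n$, this step is the nontrivial theorem that the kernel of its map to $\Z$ is the affine Artin group of type $\widetilde{A}_{n-1}$. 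Relation $7^\circ$ and the other global relations must come out of exactly this step, and you only assert that it ``should'' work. Your route closes directly only for the disc, where a fan triangulation gives Artin's presentation.

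Your fallback does not repair this.
\begin{itemize}
\item It is circular. In this paper, simple connectivity of $\EG^\TT(\mathbf{S}_\Delta)$ with squares, pentagons and hexagons attached is Theorem~\ref{thm:EGdecsimpl-conn}, which is derived \emph{from} Theorem~\ref{thm:Braid}.
\item There are no stabilizers to read relations from. $\BT(\mathbf{S}_\Delta)$ acts freely on $\EG^\TT(\mathbf{S}_\Delta)$, with quotient $\EG(\mathbf{S})$.
\item Even granting simple connectivity, you would only get $\BT(\mathbf{S}_\Delta)\cong\pi_1\EG(\mathbf{S})/\Rel(\mathbf{S})=\TB(T)$. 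A presentation of $\TB(T)$ by $1^\circ$--$7^\circ$ is exactly what is missing.
\end{itemize}

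A smaller point: for flip invariance to transfer injectivity, you must show the maps $\Br(\TT)\to\Br(\TT^\#)$ are isomorphisms, for instance via the backward-flip formula, not just homomorphisms.
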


\begin{prop}\label{prop:triangulationbraid}
	If there is a subset of arcs in the triangulation $T^\circ \in \EG(\mathbf{S})$ like in one of the cases of Definition~\ref{def:braidgroup}, then the corresponding relation there holds in the triangulation braid group $\TB(T^\circ)$.
\end{prop}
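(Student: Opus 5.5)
The plan is to reduce every relation $1^\circ$–$7^\circ$ to the two basic relations of Lemma~\ref{lem:TB_braidrelation}, combined with the conjugation formula of Proposition~\ref{prop:conjugation_formula}. Relations $1^\circ$ and $2^\circ$ are exactly Lemma~\ref{lem:TB_braidrelation}(1) and (2). They hold because the square and pentagon loops lie in $\Rel(\mathbf{S})$.

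For the remaining relations the common shape is $\Co(x^w,y)$ or $\Br(x^w,y)$, where $w$ is a short word in the local twists at $T^\circ$. The strategy is to find a path $p$ of forward flips from $T^\circ$ to a triangulation $T'$ with three properties:
\begin{itemize}
	\item conjugation by $p$ carries $x^w$ to a single local twist $t'_{x}$ at $T'$;
	\item it carries $y$ to a single local twist $t'_y$ at $T'$ (typically because $y$ is untouched by the flips);
	\item in $T'$ the arcs $x$ and $y$ either share no triangle or share exactly one triangle.
\end{itemize}
Once such a $p$ is found, Lemma~\ref{lem:TB_braidrelation} applied at $T'$ gives the commutation or braid relation there. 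Transporting it back along $\ad_p^{-1}$, which is an isomorphism $\TB(T')\to\TB(T^\circ)$ by iterating Proposition~\ref{prop:conjugation_formula}, yields the desired relation at $T^\circ$.

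Concretely, for $3^\circ$ and $4^\circ$ (the pentagon with diagonals $a,b$ and the edge $c$), I would flip $b$ forward once. By Proposition~\ref{prop:conjugation_formula}, $a$ precedes $b$ clockwise, so $\ad_z(t_a)$ equals $t_b^{-1}t_at_b$, i.e.\ $a^b$ becomes the local twist of the arc $a$ in the flipped triangulation. One then checks from the picture that the new $a$ shares no triangle with $c$ in case $3^\circ$, and exactly one triangle in case $4^\circ$ (where $c$ also appears as the other side, so after the flip the two triangles collapse to one common triangle).

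Cases $5^\circ$ and $6^\circ$ go the same way with the two-step word $ae$ or $ea$. Here I would use two successive forward flips, first at $a$ and then at $e$ (respectively $e$ then $a$). After each flip I would recompute the clockwise-precedence data so that the conjugation formula applies at every step, verifying that $c$ is carried to $c^{ae}$. Then I would check in the final triangulation that $c$ and $b$ are disjoint (giving $5^\circ$) or share one triangle (giving $6^\circ$). For the two relations $\Br(c^{ae},b)$ and $\Br(c^{ea},b)$ in $7^\circ$, the same computations apply verbatim, since they only involve the part of the configuration already present in case $6^\circ$.

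The main obstacle is $\Co(e,f^{abc})$ in $7^\circ$. Here the conjugating word has length three and winds around a self-folded configuration in which $c$ and $e$ appear twice. The conjugation formula must be applied through three flips whose intermediate triangulations may stop satisfying the ``precedes clockwise'' hypothesis uniformly, and the resulting arc may not be a single arc of a nearby triangulation. I would first try the flip sequence $c,b,a$ and track $f$. If $f^{abc}$ does not become a single local twist, I would try to combine the hexagon relation with the previously established $\Br(c^{ae},b)$ to rewrite $f^{abc}$. I expect this step may not close purely within triangulation combinatorics. In that case I would record only $1^\circ$–$6^\circ$ and the first two relations of $7^\circ$ as proven, consistent with the caveat stated before Lemma~\ref{lem:BT_is_quotient}.
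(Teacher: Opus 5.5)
Your strategy for $1^\circ$--$6^\circ$ is the paper's own: flip inside the relevant polygon, transport the generators with Proposition~\ref{prop:conjugation_formula}, apply Lemma~\ref{lem:TB_braidrelation} at the new triangulation, and pull back. The proposal still does not prove the proposition as stated. The decisive gap is the one you flag yourself, $\Co(e,f^{abc})$ in $7^\circ$. Falling back to ``record $1^\circ$--$6^\circ$ as proven'' does not prove a statement that asserts all seven cases. The paper stops at exactly the same point. Its treatment of $7^\circ$ only states which commutation would have to be shown, and the text before Lemma~\ref{lem:BT_is_quotient} concedes that the seventh relation remains unproven. So this step needs an input beyond flip-and-conjugate, for instance the cluster-theoretic result of KQ2 that the paper appeals to.

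Two further problems affect the cases you do treat.

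(i) The direction of conjugation is reversed. Suppose $a$ precedes $b$ clockwise and $z$ is the forward flip at $b$. Then $\ad_z(t^\circ_a)=(t^\bullet_b)^{-1}t^\bullet_a t^\bullet_b$. Hence the element sent to the single twist $t^\bullet_a$ is $t^\circ_b t^\circ_a (t^\circ_b)^{-1}$, whereas $\ad_z\bigl((t^\circ_b)^{-1}t^\circ_a t^\circ_b\bigr)=(t^\bullet_b)^{-2}t^\bullet_a(t^\bullet_b)^{2}$. Your ``i.e.\ $a^b$ becomes the local twist of $a$'' is correct only after you fix the paper's identification $B_\eta\mapsto t^{-1}$, under which $a^b=b^{-1}ab\mapsto t_b t_a^{-1}t_b^{-1}$. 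Without that identification you are checking the wrong conjugates.

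(ii) Reversing the flip order does not reverse the conjugating word in $6^\circ$. With the precedence data of that configuration, both the route $a,e$ and the route $e,a$ carry $t^\circ_e t^\circ_a\, t^\circ_c\,(t^\circ_e t^\circ_a)^{-1}$ to the single twist $t_c$. Both also leave $b$ and $c$ sharing exactly one triangle. Neither route makes $t^\circ_a t^\circ_e\, t^\circ_c\,(t^\circ_a t^\circ_e)^{-1}$ a single twist. So both routes yield $\Br(c^{ae},b)$, and neither yields $\Br(c^{ea},b)$.

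Your ``verbatim'' transfer to $7^\circ$ is fine for $\Br(c^{ae},b)$, since $f$ only replaces a boundary side. For $\Br(c^{ea},b)$ it inherits the same gap.

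The paper's three-flip route $e,a,e$ does not repair this. It has the same endpoints as $a,e$, and the two routes form a pentagon loop, so they induce the same $\ad$. A direct check with the precedence data shows that the paper's formula $c_0\mapsto(ae)^{-1}c(ae)$ for the route $a,e$ should read $(ea)^{-1}c(ea)$. A genuinely different argument is therefore needed for $\Br(c^{ea},b)$.
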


\begin{proof}
	We need to show that the relations in $\BT(\TT)$ are also valid in $\TB(T^\circ)$. The map is given by $B_a \mapsto (t^\circ_a)^{-1}$.
	
	The first two relations follow from Lemma~\ref{lem:TB_braidrelation}.
	
	$3^\circ.$ For the third relation, we must show that $t_b^\circ (t_a^\circ)^{-1} (t_b^\circ)^{-1}$ and $(t_c^\circ)^{-1}$ commute. In the proof, we use the shorthand notation $a_0 := t_a^\circ$, $a := t_a^\bullet$, and similarly for the other elements. We perform a flip at edge $b_0$ in the first triangulation of Figure~\ref{lem:TB_braidrelation}:
	
	\begin{center}
		\begin{tikzpicture}[scale=0.75, every node/.style={scale=0.75}]
			\node (s) [regular polygon, regular polygon sides=5, draw, minimum size=33mm] {};
			\draw (s.corner 1) -- (s.corner 3);
			\draw (s.corner 1) -- (s.corner 4);
			\node at (-0.75,0) {$a_0$};
			\node at (0.75,0) {$b_0$};
			\node at (0,-1.5) {$c_0$}; 
			\draw[->] (1.75,0) -- (3,0);
		\end{tikzpicture}
		\begin{tikzpicture}[scale=0.75, every node/.style={scale=0.75}]
			\node (s) [regular polygon, regular polygon sides=5, draw, minimum size=33mm] {};
			\draw (s.corner 1) -- (s.corner 3);
			\draw (s.corner 3) -- (s.corner 5);
			\node at (-0.75,0) {$a$};
			\node at (0.5,0) {$b$};
			\node at (0,-1.5) {$c$};
		\end{tikzpicture}
	\end{center}
	By Proposition~\ref{prop:conjugation_formula}, the map on the corresponding triangulation braid groups is given by
	\begin{align*}
		\ad_z: \TB(T^\circ) &\to \TB(T^\bullet), \\
		a_0 &\mapsto b^{-1}ab, \\
		b_0, c_0 &\mapsto b, c.
	\end{align*}
	Additionally,
	\begin{align*}
		\ad_z(b_0 a_0^{-1} b_0^{-1}) &= b b^{-1} a^{-1} b b^{-1} = a^{-1}.
	\end{align*}
	Using that $ac = ca$ from Lemma~\ref{lem:TB_braidrelation}, we obtain
	\begin{align*}
		\ad_z((b_0 a_0^{-1} b_0^{-1}) (c_0^{-1})) &= a^{-1} c^{-1} = c^{-1} a^{-1} = \ad_z((c_0^{-1}) (b_0 a_0^{-1} b_0^{-1})).
	\end{align*}
	Thus, we conclude that
	\begin{align*}
		(b_0 a_0^{-1} b_0^{-1}) (c_0^{-1}) &= (c_0^{-1}) (b_0 a_0^{-1} b_0^{-1}).
	\end{align*}
	
	$4^\circ.$ For the fourth relation, we must verify the braid relation for $b_0 a_0^{-1} b_0^{-1}$ and $c_0^{-1}$. We perform a flip at edge $b_0$ in the second triangulation of Figure~\ref{braid-triang}:
	\begin{center}
		\begin{tikzpicture}[scale=0.75, every node/.style={scale=0.75}]
			\node (s)   [regular polygon, regular polygon sides=5, draw,
			minimum size=33mm] {};
			\draw (s.corner 1) -- (s.corner 3);
			\draw (s.corner 1) -- (s.corner 4);
			\node at (-0.75,0) {$a_0$};
			\node at (0.75,0) {$b_0$};
			\node at (0,-1.5) {$c_0$};
			\node at (1,1.2) {$c_0$};
			\draw[->] (1.75,0) -- (3,0);
		\end{tikzpicture}
		\begin{tikzpicture}[scale=0.75, every node/.style={scale=0.75}]
			\node (s)   [regular polygon, regular polygon sides=5, draw,
			minimum size=33mm] {};
			\draw (s.corner 1) -- (s.corner 3);
			\draw (s.corner 3) -- (s.corner 5);
			\node at (-0.75,0) {$a$};
			\node at (0.5,0) {$b$};
			\node at (0,-1.5) {$c$};
			\node at (1,1.2) {$c$};
		\end{tikzpicture}
	\end{center}
	We apply Proposition~\ref{prop:conjugation_formula}:
	\begin{align*}
		\ad_z: \TB(T^\circ) &\to \TB(T^\bullet), \\
		a_0 &\mapsto b^{-1} a b, \\
		b_0, c_0 &\mapsto b, c.
	\end{align*}
	Using $aca = cac$ from Lemma~\ref{lem:TB_braidrelation}, we find
	\begin{align*}
		\ad_z((b_0 a_0^{-1} b_0^{-1}) (c_0^{-1}) (b_0 a_0^{-1} b_0^{-1})) &= a^{-1} c^{-1} a^{-1} = c^{-1} a^{-1} c^{-1}, \\
		&= \ad_z((c_0^{-1}) (b_0 a_0^{-1} b_0^{-1}) (c_0^{-1})).
	\end{align*}
	Thus,
	\begin{align*}
		(b_0 a_0^{-1} b_0^{-1}) (c_0^{-1}) (b_0 a_0^{-1} b_0^{-1}) &= (c_0^{-1}) (b_0 a_0^{-1} b_0^{-1}) (c_0^{-1}).
	\end{align*}
	
	$5^\circ.$ For the fifth relation, we need to show that $e_0 a_0 c_0^{-1} a_0^{-1} e_0^{-1}$ and $b_0^{-1}$ commute.
	We first perform a flip at edge $a_0$ and then at edge $e'$ in the third triangulation of Figure~\ref{braid-triang}:
	\begin{center}
		\begin{tikzpicture}[scale=0.75, every node/.style={scale=0.75}]
			\node (s)   [regular polygon, regular polygon sides=5, draw,
			minimum size=33mm] {};
			\draw (s.corner 1) -- (s.corner 3);
			\draw (s.corner 1) -- (s.corner 4);
			\node at (-0.75,0) {$a_0$};
			\node at (0.75,0) {$b_0$};
			\node at (0,-1.5) {$c_0$};
			\node at (1,1.2) {$c_0$};
			\node at (1.6,-0.5) {$e_0$};
			\draw[->] (1.75,0) -- (3,0);
		\end{tikzpicture}
		\begin{tikzpicture}[scale=0.75, every node/.style={scale=0.75}]
			\node (s)   [regular polygon, regular polygon sides=5, draw,
			minimum size=33mm] {};
			\draw (s.corner 2) -- (s.corner 4);
			\draw (s.corner 1) -- (s.corner 4);
			\node at (-0.2,-0.2) {$a'$};
			\node at (0.75,0) {$b'$};
			\node at (0,-1.5) {$c'$};
			\node at (1,1.2) {$c'$};
			\node at (1.5,-0.4) {$e'$};
			\draw[->] (1.75,0) -- (3,0);
		\end{tikzpicture}
		\begin{tikzpicture}[scale=0.75, every node/.style={scale=0.75}]
			\node (s)   [regular polygon, regular polygon sides=6, draw,
			minimum size=33mm] {};
			\draw (s.corner 2) -- (s.corner 4);
			\draw (s.corner 2) -- (s.corner 5);
			\draw (s.corner 2) -- (s.corner 6);
			\node at (-1.4,-0.8) {$a$};
			\node at (1.4,0.8) {$a$};
			\node at (0.2,0) {$e$};
			\node at (-1,0) {$b$};
			\node at (0.2,1) {$c$};
		\end{tikzpicture}
	\end{center}
	Applying Proposition~\ref{prop:conjugation_formula}, we get
	\begin{align*}
		\ad_z: \TB(T^\circ) &\to \TB(T^\bullet), \\
		a_0, b_0, e_0 &\mapsto a, b, e, \\
		c_0 &\mapsto (ae)^{-1} c (ae).
	\end{align*}
	Using $ea = ae$ from Lemma~\ref{lem:TB_braidrelation}, we compute
	\begin{align*}
		\ad_z(e_0 a_0 c_0^{-1} a_0^{-1} e_0^{-1}) &= ea (e^{-1} a^{-1} c^{-1} ae) a^{-1} e^{-1} = c^{-1}.
	\end{align*}
	Since $cb = bc$, it follows that
	\begin{align*}
		\ad_z((e_0a_0c_0^{-1}a_0^{-1}e_0^{-1}) \cdot (b_0^{-1})) = c^{-1} b^{-1} = b^{-1} c^{-1} = \ad_z((b_0^{-1}) \cdot (e_0a_0c_0^{-1}a_0^{-1}e_0^{-1})), 
	\end{align*}
	confirming that $(e_0a_0c_0^{-1}a_0^{-1}e_0^{-1}) \cdot (b_0^{-1}) = (b_0^{-1}) \cdot (e_0a_0c_0^{-1}a_0^{-1}e_0^{-1})$.
	
	$6^\circ.$ For the sixth relation, we must show two cases. The first case requires verifying that $(a_0 e_0) c_0^{-1} (a_0 e_0)^{-1}$ and $b_0^{-1}$ satisfy the braid relation. 
	We first perform a flip at edge $a_0$ and then at edge $e'$ in the fourth triangulation of Figure~\ref{braid-triang}:
	\begin{center}
		\begin{tikzpicture}[scale=0.75, every node/.style={scale=0.75}]
			\node (s)   [regular polygon, regular polygon sides=5, draw,
			minimum size=33mm] {};
			\draw (s.corner 2) -- (s.corner 4);
			\draw (s.corner 1) -- (s.corner 4);
			\node at (-0.3,-0.2) {$e_0$};
			\node at (0.75,0.2) {$a_0$};
			\node at (0,-1.6) {$b_0$}; 
			\node at (-1.5,-0.6) {$c_0$};
			\node at (1.5,-0.6) {$c_0$};
			\node at (1,1.2) {$b_0$}; 
			\draw[->] (1.75,0) -- (3,0);
		\end{tikzpicture}
		\begin{tikzpicture}[scale=0.75, every node/.style={scale=0.75}]
			\node (s)   [regular polygon, regular polygon sides=5, draw,
			minimum size=33mm] {};
			\draw (s.corner 2) -- (s.corner 5);
			\draw (s.corner 2) -- (s.corner 4);
			\node at (-0.2,-0.2) {$e'$};
			\node at (0,0.7) {$a'$};
			\node at (0,-1.6) {$b'$}; 
			\node at (-1.5,-0.6) {$c'$};
			\node at (1.5,-0.6) {$c'$};
			\node at (1,1.2) {$b'$}; 
			\draw[->] (1.75,0) -- (3,0);
		\end{tikzpicture}
		\begin{tikzpicture}[scale=0.75, every node/.style={scale=0.75}]
			\node (s)   [regular polygon, regular polygon sides=5, draw,
			minimum size=33mm] {};
			\draw (s.corner 2) -- (s.corner 5);
			\draw (s.corner 3) -- (s.corner 5);
			\node at (0.4,-0.6) {$e$};
			\node at (0,0.7) {$a$};
			\node at (0,-1.6) {$b$}; 
			\node at (-1.5,-0.6) {$c$};
			\node at (1.5,-0.6) {$c$};
			\node at (1,1.2) {$b$}; 
		\end{tikzpicture}
	\end{center}
	Using Proposition~\ref{prop:conjugation_formula}, we obtain:
	\begin{align*}
		\ad_z: \TB(T^\circ) &\to \TB(T^\bullet), \\
		a_0, b_0, e_0 &\mapsto a, b, e, \\
		c_0 &\mapsto (ae)^{-1} c (ae).
	\end{align*}
	Additionally, we find $\ad_z((a_0e_0)c_0^{-1}(a_0e_0)^{-1}) = (ae)((ae)^{-1}c^{-1}(ae))(ae)^{-1} = c^{-1}$.
	Using $cbc = bcb$ from Lemma~\ref{lem:TB_braidrelation}, we obtain:
	\begin{align*}
		\ad_z((a_0e_0)c_0^{-1}(a_0e_0)^{-1} \cdot (b_0^{-1}) \cdot (a_0e_0)c_0^{-1}(a_0e_0)^{-1}) = c^{-1} b^{-1} c^{-1} \\
		= b^{-1} c^{-1} b^{-1} = \ad_z((b_0^{-1}) \cdot (a_0e_0)c_0^{-1}(a_0e_0)^{-1} \cdot (b_0^{-1})),
	\end{align*}
	confirming that $$(a_0e_0)c_0^{-1}(a_0e_0)^{-1} \cdot (b_0^{-1}) \cdot (a_0e_0)c_0^{-1}(a_0e_0)^{-1} = (b_0^{-1}) \cdot (a_0e_0)c_0^{-1}(a_0e_0)^{-1} \cdot (b_0^{-1}).$$
	
	For the second case, we need to show that $(e_0 a_0) c_0^{-1} (e_0 a_0)^{-1}$ and $b_0^{-1}$ satisfy the braid relation. 
	We first perform a flip at edge $e_0$, then at edge $a'$ and then at the edge $e''$ in the fourth triangulation of Figure~\ref{braid-triang}:
	\begin{center}
		\begin{tikzpicture}[scale=0.6, every node/.style={scale=0.6}]
			\node (s)   [regular polygon, regular polygon sides=5, draw,
			minimum size=33mm] {};
			\draw (s.corner 2) -- (s.corner 4);
			\draw (s.corner 1) -- (s.corner 4);
			\node at (-0.3,-0.2) {$e_0$};
			\node at (0.75,0.2) {$a_0$};
			\node at (0,-1.6) {$b_0$}; 
			\node at (-1.5,-0.6) {$c_0$};
			\node at (1.5,-0.6) {$c_0$};
			\node at (1,1.2) {$b_0$}; 
			\draw[->] (1.75,0) -- (3,0);
		\end{tikzpicture}
		\begin{tikzpicture}[scale=0.6, every node/.style={scale=0.6}]
			\node (s)   [regular polygon, regular polygon sides=5, draw,
			minimum size=33mm] {};
			\draw (s.corner 1) -- (s.corner 3);
			\draw (s.corner 1) -- (s.corner 4);
			\node at (-0.75,0.2) {$e'$};
			\node at (0.75,0.2) {$a'$};
			\node at (0,-1.6) {$b'$}; 
			\node at (-1.5,-0.6) {$c'$};
			\node at (1.5,-0.6) {$c'$};
			\node at (1,1.2) {$b'$}; 
			\draw[->] (1.75,0) -- (3,0);
		\end{tikzpicture}
		\begin{tikzpicture}[scale=0.6, every node/.style={scale=0.6}]
			\node (s)   [regular polygon, regular polygon sides=5, draw,
			minimum size=33mm] {};
			\draw (s.corner 1) -- (s.corner 3);
			\draw (s.corner 3) -- (s.corner 5);
			\node at (-0.75,0.2) {$e''$};
			\node at (0.4,-0.6) {$a''$};
			\node at (0,-1.6) {$b''$}; 
			\node at (-1.5,-0.6) {$c''$};
			\node at (1.5,-0.6) {$c''$};
			\node at (1,1.2) {$b''$}; 
			\draw[->] (1.75,0) -- (3,0);
		\end{tikzpicture}
		\begin{tikzpicture}[scale=0.6, every node/.style={scale=0.6}]
			\node (s)   [regular polygon, regular polygon sides=5, draw,
			minimum size=33mm] {};
			\draw (s.corner 2) -- (s.corner 5);
			\draw (s.corner 3) -- (s.corner 5);
			\node at (0.4,-0.6) {$a$};
			\node at (0,0.7) {$e$};
			\node at (0,-1.6) {$b$}; 
			\node at (-1.5,-0.6) {$c$};
			\node at (1.5,-0.6) {$c$};
			\node at (1,1.2) {$b$}; 
		\end{tikzpicture}
	\end{center}
	Using Proposition~\ref{prop:conjugation_formula}, we obtain:
	\begin{align*}
		\ad_z: \TB(T^\circ) &\rightarrow \TB(T^\bullet), \\
		a_0 &\mapsto e\\
		b_0 &\mapsto b\\
		e_0 &\mapsto a\\
		c_0 &\mapsto (ae)^{-1}c(ae)
	\end{align*}
	Additionally, we find $\ad_z((e_0a_0)c_0^{-1}(e_0a_0)^{-1}) = (ae)((ae)^{-1}c^{-1}(ae))(ae)^{-1} = c^{-1}$.
	Using that $cbc = bcb$ from Lemma~\ref{lem:TB_braidrelation}, we find
	\begin{align*}
		\ad_z((e_0a_0)c_0^{-1}(e_0a_0)^{-1} \cdot (b_0^{-1}) \cdot (a_0e_0)c_0^{-1}(a_0e_0)^{-1}) = c^{-1} b^{-1} c^{-1} \\
		= b^{-1} c^{-1} b^{-1} = \ad_z((b_0^{-1}) \cdot (e_0a_0)c_0^{-1}(e_0a_0)^{-1} \cdot (b_0^{-1})).
	\end{align*}
	Thus, $$\big( (e_0a_0)c_0^{-1}(e_0a_0)^{-1} \big) \cdot (b_0^{-1}) \cdot \big( (e_0a_0)c_0^{-1}(e_0a_0)^{-1} \big) = (b_0^{-1}) \cdot \big( (e_0a_0)c_0^{-1}(e_0a_0)^{-1} \big) \cdot (b_0^{-1}).$$
	
	$7^\circ.$ For the seventh relation, we need to show that $e_0^{-1}$ and $$c_0^{-1}b_0^{-1}a_0^{-1}f_0^{-1}a_0b_0c_0$$ commute.
\end{proof}

%% file: sec_quad.tex
\section{Quadratic differentials}\label{sec:quad}
We now connect the combinatorial structures of Section~\ref{sec:eg} to the geometry of
meromorphic quadratic differentials.  A saddle-free differential induces a foliation whose trajectory structure naturally gives rise to triangulations or, in the presence of higher–order zeroes, $\mathbf{w}$–mixed-angulations.  
Since the combinatorics of a triangulation depend on how the differential is placed on the surface, we need a canonical way to compare differentials.
A \textit{framing} provides this by identifying the real blow-up of the differential with one of the model surfaces of Subsection~\ref{subsection:surfaces}.

The simple and decorated cases follow §4.1 in \cite{KQ2}, while the weighted setting needed for higher–order zeroes uses the framework of \cite{BMQS}.  
Throughout this section we describe the moduli spaces of framed differentials and explain how their trajectory structures interact with the exchange graphs of Section~\ref{sec:eg}. These geometric models will form the basis for the fundamental-group computations carried out in Section~\ref{sec:egandmod}.

We begin by defining framed differentials with various additional data, then explain how saddle-free differentials determine arc systems, and finally recall the wall–chamber structure.

\subsection{Framed Quadratic Differentials}
In this subsection we define framed quadratic differentials in the simple,
decorated, and weighted settings.  The first two follow §4.1 in \cite{KQ2}, while the
weighted framework is taken from \cite{BMQS}.  These definitions supply the
moduli spaces that underlie the constructions in the rest of the section.

\begin{definition}
	Let $\mathbf{X}$ be a compact Riemann surface and $\omega_{\mathbf{X}}$ its holomorphic cotangent bundle.
	A \emph{meromorphic quadratic differential} $\phi$ on $\mathbf{X}$ is a meromorphic section of
	$\omega_{\mathbf{X}}^{\otimes 2}$.
	Locally, in a coordinate $z$, we can write $$ \phi = g(z)\,dz^2 $$
	for some meromorphic function~$g(z)$.
\end{definition}

We will only consider meromorphic quadratic differentials $\phi$ such that $\phi$ has at least one finite critical point and every pole of $\phi$ has order at least three.
Denote by $\Zer(\phi)$ the set of zeroes of $\phi$, by $\Pol_j(\phi)$ the set of poles of order~$j$, and set
$$\Pol(\phi):=\bigcup_j \Pol_j(\phi), \ \ \Crit(\phi):=\Zer(\phi)\cup\Pol(\phi).$$

We associate to each differential a marked surface (as in Definition~\ref{def:MS}) by \emph{real blow-up}. 

\begin{definition}
	The \emph{real (oriented) blow-up} of $(\mathbf{X},\phi)$ is the surface $\mathbf{X}_\phi$
	obtained by replacing each pole $P$ of order $\ge3$ by a boundary component $\partial_P$.
	Points of $\partial_P$ correspond to real tangent directions at~$P$, and we mark on $\partial_P$
	the $\ord_P(\phi)-2$ distinguished directions.
\end{definition}

Thus $\mathbf{X}_\phi$ is a marked surface, diffeomorphic to some $\mathbf{S}=(S,\mathbf{M})$ as in Definition~\ref{def:MS}.

\begin{definition}
	An \emph{$\mathbf{S}$-framed quadratic differential} is a triple
	$(\mathbf{X},\phi,\psi)$ consisting of
	\begin{itemize}
		\item[(a)] a Riemann surface $\mathbf{X}$ with
		\item[(b)] a GMN differential~$\phi$ and
		\item[(c)] a diffeomorphism
		$\psi:\mathbf{S}\to\mathbf{X}_\phi$ preserving marked points.
	\end{itemize}
	Two such framed differentials $(\mathbf{X}_1,\phi_1,\psi_1)$ and $(\mathbf{X}_2,\phi_2,\psi_2)$ are
	\emph{equivalent} if there exists a biholomorphism $f:\mathbf{X}_1\to\mathbf{X}_2$
	\begin{itemize}
		\item[(a)] with $f^*\phi_2=\phi_1$
		\item[(b)] and $\psi_2^{-1}\circ f_*\circ\psi_1\in\Diff_0(\mathbf{S})$.
	\end{itemize}
	the identity component of the diffeomorphism group preserving $\mathbf{M}$ setwise.
	The moduli space of such framed differentials is denoted $\FQuad(\mathbf{S})$.
\end{definition}

The mapping class group $\MCG(\mathbf{S})$ (see Definition~\ref{def:MCG_S_S_Delta}) acts on $\FQuad(\mathbf{S})$ by changing the framing, and the quotient gives the unframed moduli space:
$$ \Quad(\mathbf{S}) \cong \FQuad(\mathbf{S}) / \MCG(\mathbf{S}). $$
This quotient is in general not a manifold but rather an orbifold.

We now enrich the construction by recording the zeroes of~$\phi$ as decorations. The decorated marked surface $\mathbf{X}^\phi_\Delta$
(as in Definition~\ref{def:DMS}) is obtained from $\mathbf{X}_\phi$ by adding~$\Delta :=\Zer(\phi)$ as a set of decorating points.

\begin{definition}\label{def:S_DeltaFramedDifferential}
	Given a decorated marked surface $\mathbf{S}_\Delta$.
	An \emph{$\mathbf{S}_\Delta$-framed quadratic differential} is a triple
	$(\mathbf{X},\phi,\psi)$ consisting of
	\begin{itemize}
		\item[(a)] a Riemann surface $\mathbf{X}$ with
		\item[(b)] a GMN differential~$\phi$ and
		\item[(c)] a diffeomorphism
		$\psi:\mathbf{S}_\Delta\to\mathbf{X}_\Delta^\phi$ preserving both marked points and decorations.
	\end{itemize}
	Two such framed differentials $(\mathbf{X}_1,\phi_1,\psi_1)$ and $(\mathbf{X}_2,\phi_2,\psi_2)$ are
	\emph{equivalent} if there exists a biholomorphism $f:\mathbf{X}_1\to\mathbf{X}_2$
	\begin{itemize}
		\item[(a)] with $f^*\phi_2=\phi_1$
		\item[(b)] and $\psi_2^{-1}\circ f_*\circ\psi_1\in\Diff_0(\mathbf{S}_\Delta)$,
	\end{itemize}
	the identity component of the diffeomorphism group preserving $\mathbf{M}$ and~$\Delta$ setwise.
	The moduli space of such framed differentials is denoted $\FQuad(\mathbf{S}_\Delta)$.
\end{definition}

The mapping class group $\MCG(\mathbf{S}_\Delta)$ (see Definition~\ref{def:MCG_S_S_Delta}) acts naturally on it:
$$ \Quad(\mathbf{S}) \cong \FQuad(\mathbf{S}_\Delta) / \MCG(\mathbf{S}_\Delta). $$
Let $\FQuad^{\mathbb{T}}(\mathbf{S}_\Delta)$ be the connected component associated to a fixed decorated triangulation~$\mathbb{T}$.

\begin{lemma}[Covering structure]\label{lemma:Deck}
	The projection
	$$
	\FQuad^{\mathbb{T}}(\mathbf{S}_\Delta)\longrightarrow \Quad(\mathbf{S})
	$$
	is a covering with deck group $\MCG(\mathbf{S}_\Delta)$.
\end{lemma}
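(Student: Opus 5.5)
The plan is to realise the projection as a quotient by a properly discontinuous action, and then to check that this action preserves the chosen component. Everything takes place in the orbifold category, in line with the paper's orbifold convention.

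\emph{Step 1: the forgetful map to unframed differentials.} First describe the fibres of $\FQuad(\mathbf{S}_\Delta)\to\Quad(\mathbf{S})$, $(\mathbf{X},\phi,\psi)\mapsto(\mathbf{X},\phi)$. Two framings $\psi_1,\psi_2$ of the same $(\mathbf{X},\phi)$ differ by
\[
\psi_2^{-1}\circ\psi_1\in\Homeo^+(\mathbf{S}_\Delta),
\]
and by Definition~\ref{def:S_DeltaFramedDifferential} they define the same point exactly when this difference is isotopic to $f_*$ for some automorphism $f$ of $(\mathbf{X},\phi)$. Hence each fibre is a $\MCG(\mathbf{S}_\Delta)$-orbit, with stabiliser the image of $\Aut(\mathbf{X},\phi)$. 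This gives the set-theoretic identification $\FQuad(\mathbf{S}_\Delta)/\MCG(\mathbf{S}_\Delta)\cong\Quad(\mathbf{S})$ stated just before the lemma.

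\emph{Step 2: local structure via period coordinates.} Next I would show that $\FQuad(\mathbf{S}_\Delta)$ is a complex manifold on which $\MCG(\mathbf{S}_\Delta)$ acts properly discontinuously and freely. For local charts I use the period map on the spectral double cover $\widehat{\mathbf{X}}\to\mathbf{X}$. Near a framed differential, $\phi$ is determined by the periods of $\sqrt{\phi}$ on the $(-1)$-eigenspace of $H_1(\widehat{\mathbf{X}}\setminus\widehat{\Pol},\widehat{\Zer};\bZ)$. The framing $\psi$ identifies this lattice with a fixed lattice attached to $\mathbf{S}_\Delta$, and the identification is locally constant. The charts are therefore well-defined on the framed space, and the mapping class group acts by deck-like transformations of these charts.

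Freeness comes from the framing: an automorphism $f$ of $(\mathbf{X},\phi)$ with $f_*$ isotopic to the identity rel $\mathbf{M}\cup\Delta$ must be trivial. This is the standard fact that a finite-order conformal automorphism fixing the marked boundary directions and isotopic to the identity is the identity. Proper discontinuity follows because only finitely many mapping classes move a small period-coordinate ball to one that meets it, by discreteness of the lattice. Non-trivial automorphisms of $(\mathbf{X},\phi)$ are exactly what produce the orbifold points of $\Quad(\mathbf{S})$. Consequently the quotient map is an orbifold covering.

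\emph{Step 3: restriction to the component $\FQuad^{\mathbb{T}}(\mathbf{S}_\Delta)$.} This is the step I expect to be the main obstacle: one must show that $\MCG(\mathbf{S}_\Delta)$ maps the component $\FQuad^{\mathbb{T}}(\mathbf{S}_\Delta)$ to itself and that this component surjects onto $\Quad(\mathbf{S})$. I would argue via Lemma~\ref{lem:kernel_F}, which gives the exact sequence
\[
1\to\B_n(\mathbf{S})\to\MCG(\mathbf{S}_\Delta)\to\MCG(\mathbf{S})\to 1 .
\]
Braid elements act by moving zeroes along paths in $S$, and such a motion can be realised by a continuous deformation of the differential, as in the construction of $\FQuad^{\mathbb{T}}$ from \cite{KQ2}. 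Lifts of elements of $\MCG(\mathbf{S})$ can likewise be connected to the base framing.

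If this fails in general, the fallback is the following. Let $G\subseteq\MCG(\mathbf{S}_\Delta)$ be the stabiliser of the component. Then $\FQuad^{\mathbb{T}}\to\Quad(\mathbf{S})$ is still a covering, with deck group $G$, and I would verify $G=\MCG(\mathbf{S}_\Delta)$ by following \cite{KQ2}, §4.1, using the connectedness of the relevant exchange graph component. Surjectivity onto $\Quad(\mathbf{S})$ follows from the same transitivity.
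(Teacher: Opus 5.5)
Steps 1--2 of your proposal have the same skeleton as the paper's one-line proof: a properly discontinuous action, so the quotient map is a covering (the paper appeals to the action on Teichm\"uller space and Hatcher's Prop.~1.40). The genuine gap is Step 3, and it cannot be repaired, because its conclusion is false unless $S$ is a disc.

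\begin{itemize}
\item \textbf{Components.} By Lemmas~\ref{lem:homotopchamberSDelta} and~\ref{lem:embeddingFQuadS}, components of $\FQuad(\mathbf{S}_\Delta)$ correspond to components of $\EG(\mathbf{S}_\Delta)$.
\item \textbf{Stabiliser of the component.} The kernel $\B_n(\mathbf{S})$ of $\MCG(\mathbf{S}_\Delta)\to\MCG(\mathbf{S})$ acts freely on the decorated lifts of a fixed triangulation $T$ of $\mathbf{S}$: an element fixing a lift fixes every arc and every once-decorated triangle. The lifts lying in $\EG^{\mathbb{T}}(\mathbf{S}_\Delta)$ form a single $\BT(\mathbf{S}_\Delta)$-orbit, since $\EG^{\mathbb{T}}(\mathbf{S}_\Delta)\to\EG(\mathbf{S})$ has deck group $\BT(\mathbf{S}_\Delta)$ (cf.\ Lemma~\ref{lem:BT_is_quotient} and \cite{KQ2}). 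Hence the stabiliser of the component inside $\B_n(\mathbf{S})$ is exactly $\BT(\mathbf{S}_\Delta)$.
\item \textbf{This is a proper subgroup.} Assign to a braid the class in $H_1(S;\mathbb{Z})$ of the $1$-cycle formed by its strands. This is a homomorphism, and it kills every braid twist, since a braid twist is supported in a disc. It is nonzero on the push of one decoration around an essential loop, for example the core of the cylinder in the two-pole case.
\end{itemize}

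So not every braid is realised by a deformation inside the component. Concretely, put the poles at $0$ and $\infty$ and drag a simple zero once around $0$. The leading coefficient of $\phi$ at $0$ makes a full turn, which also shifts the marked points on that boundary component. The monodromy is therefore not the pure push.

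Your fallback is what is actually true. $\FQuad^{\mathbb{T}}(\mathbf{S}_\Delta)\to\Quad(\mathbf{S})$ is an (orbifold) covering whose deck group is the stabiliser $G$ of the component. This $G$ is an extension of $\MCG(\mathbf{S})$ by $\BT(\mathbf{S}_\Delta)$. We have $G=\MCG(\mathbf{S}_\Delta)$ exactly when $\mathbf{S}$ is a disc, where braid twists generate the whole braid group; this covers the one-pole examples of Section~\ref{sec:examples}. The verification of $G=\MCG(\mathbf{S}_\Delta)$ you propose via \cite{KQ2} therefore cannot succeed in general. The paper's proof does not address the component question at all.

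There are also two smaller problems in Step 2.

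\begin{itemize}
\item \textbf{Proper discontinuity.} Discreteness of the lattice does not give it, because $\MCG(\mathbf{S}_\Delta)$ acts on framed periods through a representation with infinite kernel. In $\Quad(1,1,1,-7)$, for instance, the monodromy $z$ of the loop $\theta\mapsto e^{i\theta}\phi$ has infinite order but acts on the rank-$2$ lattice by $-1$. So all the points $z^{2k}\cdot q$ have identical period coordinates, and period charts cannot separate them. This is why one argues via the action on Teichm\"uller space, as the paper does.
\item \textbf{Freeness.} The action is not free, as your own Step 1 shows: the stabiliser of $[\mathbf{X},\phi,\psi]$ is the image of $\Aut(\mathbf{X},\phi)$. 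This is nontrivial, for example, for $(z^2-1)\,dz^2$ (a disc, two simple zeroes, a pole of order $6$), via $z\mapsto -z$. What your ``freeness'' argument actually proves is that framed differentials have no nontrivial automorphisms, i.e.\ that $\FQuad(\mathbf{S}_\Delta)$ is a manifold. The correct conclusion is the orbifold covering you state at the end of Step 2, which is more accurate than the paper's appeal to a free action.
\end{itemize}
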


\begin{proof}
	The action of $\MCG(\mathbf{S}_\Delta)$ on $\FQuad^{\mathbb{T}}(\mathbf{S}_\Delta)$
	is properly discontinuous because the induced action on Teichmüller space is.
	By \cite[Prop.\,1.40]{Hatcher}, this properly discontinuous free action yields a covering with deck group $\MCG(\mathbf{S}_\Delta)$.
\end{proof}

We now allow zeroes of arbitrary order.
These orders are recorded by a weight function on the decorations, which in turn determines the number of vertices of each polygon in the associated mixed-angulation.
The following definition is taken from \cite{BMQS}.

\begin{definition}\label{def:S_wFramedDifferential}
	Given a decorated marked surface $\sow$.
	An \emph{$\sow$-framed quadratic differential} is a triple
	$(\mathbf{X},\phi,\psi)$ consisting of
	\begin{itemize}
		\item[(a)] a Riemann surface $\mathbf{X}$ with
		\item[(b)] a GMN differential~$\phi$ and
		\item[(c)] a diffeomorphism
		$\psi:\mathbf{S}_{\mathbf{w}}\to\mathbf{X}_{\mathbf{w}}^\phi$ preserving marked points, decorations and weights.
	\end{itemize}
	Two such framed differentials $(\mathbf{X}_1,\phi_1,\psi_1)$ and $(\mathbf{X}_2,\phi_2,\psi_2)$ are
	\emph{equivalent} if there exists a biholomorphism $f:\mathbf{X}_1\to\mathbf{X}_2$
	\begin{itemize}
		\item[(a)] with $f^*\phi_2=\phi_1$
		\item[(b)] and $\psi_2^{-1}\circ f_*\circ\psi_1\in\Diff_0(\mathbf{S}_{\mathbf{w}})$,
	\end{itemize}
	the identity component of the diffeomorphism group preserving $\mathbf{M}$,~$\Delta$ and their weights setwise.
	The moduli space of such framed differentials is denoted $\FQuad(\mathbf{S}_{\mathbf{w}})$, and a chosen connected component by $\FQuad^\circ(\mathbf{S}_{\mathbf{w}})$.
\end{definition}

The mapping class group
\[
\MCG(\mathbf{S}_{\mathbf{w}})
:= \Diff(\mathbf{S}_{\mathbf{w}})/\Diff_0(\mathbf{S}_{\mathbf{w}})
\]
(consisting of isotopy classes of diffeomorphisms fixing the boundary and marked points
and preserving the weighted decorations setwise)
acts on $\FQuad(\mathbf{S}_{\mathbf{w}})$, and we set
\[
\Quad(\sow) \;:=\; \FQuad(\mathbf{S}_{\mathbf{w}})/\MCG(\mathbf{S}_{\mathbf{w}}).
\]

\begin{rmk}
	In general $\Quad(\sow)$ is an orbifold rather than a manifold.
	A basic example is the stratum $\Quad(2,1,1,-8)$, which can be realised
	by choosing $\mathbf{S}$ as the disc with six marked boundary points and
	weights $\mathbf{w} = (2,1,1)$:
	It is the quotient of the labelled stratum by an $S_2$-action permuting the two simple zeroes, and this action has fixpoints.
	We include this example to indicate the natural appearance of orbifold structures in the unlabelled or unframed setting.
\end{rmk}

\subsection{From Differential to Exchange Graph}\label{subsection:difftoeg}
Here we recall how a saddle-free quadratic differential determines a canonical
collection of arcs on its real blow-up: a triangulation in the simple-zero setting and a $\mathbf{w}$–mixed–angulation when higher–order zeroes are present. The description in the simple and decorated cases follows §4.2 in \cite{KQ2}, which also quotes \cite{BS15}, while the weighted and mixed–angulation behaviour stems from \cite{BMQS}. This construction assigns to every saddle-free framed differential a vertex in the appropriate exchange graph of Section~\ref{sec:eg}.\\


Let $\phi = g(z)\,dz^2$ be a quadratic differential on a Riemann surface $\mathbf{X}$, in a coordinate $z$ and for some meromorphic function~$g(z)$.
On $\mathbf{X}^\circ := \mathbf{X}\setminus\Crit(\phi)$ there exists a distinguished local coordinate $\omega$,
unique up to $\omega\mapsto\pm\omega+\mathrm{const}$, such that $\phi=d\omega\otimes d\omega$.
In local coordinates, $\omega=\int \sqrt{g(z)}\,dz$.
This induces the \emph{$\phi$-metric} on $\mathbf{X}^\circ$ by pulling back the Euclidean metric via~$\omega$;
geodesics for this metric have constant argument of~$\sqrt{\phi}$.

A (horizontal) trajectory of~$\phi$ is a maximal horizontal geodesic
$\gamma:(0,1)\to\mathbf{X}^\circ$ with respect to the $\phi$-metric.
If $\lim_{t\to0}\gamma(t)$ (resp.\ $\lim_{t\to1}\gamma(t)$) exists in~$\mathbf{X}$,
it is called the left (resp.\ right) endpoint of~$\gamma$.
The trajectories of $\phi$ determine its \emph{horizontal foliation}.
Their local behaviour is well understood (\cite[§3]{BS15}):
\begin{itemize}
	\item Near a zero of order $k$, trajectories form $k+2$ prongs meeting at equal angles. See Figure~\ref{fig:local_trajectory_zeroes} for the local trajectory structure for zeroes of order $1,2$ and $3$.
	\begin{figure}[ht]
		\centering
		\input{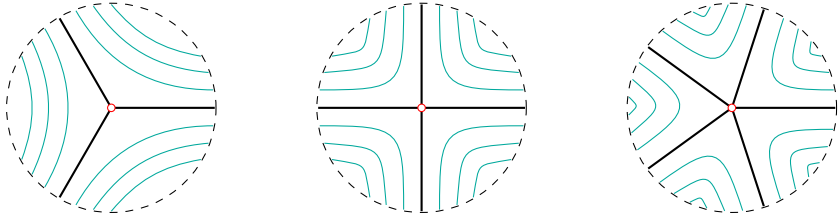}
		\caption{Local trajectories at zeroes of order $1,2,3$}
		\label{fig:local_trajectory_zeroes}
	\end{figure}
	\item Near a pole of order $m\ge3$, there are $m-2$ distinguished tangent directions $\{v_j\}$ such that trajectories approaching the pole become asymptotic to one of these directions. See Figure~\ref{fig:local_trajectory_poles} for the local trajectory structure for poles of order $3,4$ and $5$.
	\begin{figure}[ht]
		\centering
		\input{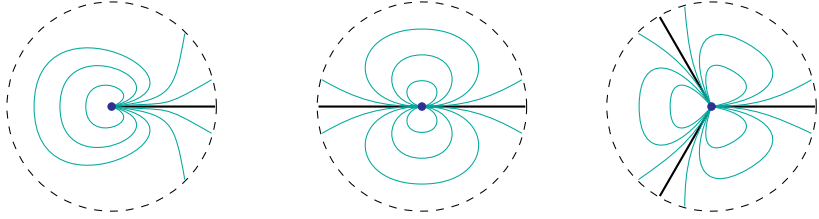}
		\caption{Local trajectories at poles of order $3,4,5$}
		\label{fig:local_trajectory_poles}
	\end{figure}
\end{itemize}

The horizontal foliation of $\phi$ decomposes $\mathbf{X}$ into standard pieces,
which later correspond to arcs and flips on the marked surface.
The trajectories of a differential fall into three types:
\begin{itemize}
	\item \emph{saddle trajectories}, joining two zeroes of $\phi$;
	\item \emph{separating trajectories}, joining a zero to a pole;
	\item \emph{generic trajectories}, joining two poles.
\end{itemize}

Removing all separating trajectories yields a disjoint union of components, each of which is either
\begin{itemize}
	\item a \emph{half-plane}, isomorphic to $\{z\in\mathbb{C}\mid \Im z>0\}$ with differential $dz^2$, swept out by trajectories from a pole to itself; or
	\item a \emph{horizontal strip}, isomorphic to $\{z\in\mathbb{C}\mid a<\Im z<b\}$ with differential $dz^2$, swept out by trajectories connecting (possibly equal) poles.
\end{itemize}
We call this union the horizontal strip decomposition of $X$ with respect to $\phi$. A differential $\phi$ on $X$ is \textit{saddle-free}, if it has no saddle trajectory. Similarly, a framed quadratic differential on $\mathbf{S}$ (or $\mathbf{S}_\Delta$) is saddle-free if the corresponding differential is saddle-free. Note the following:
\begin{itemize}
	\item In each horizontal strip, the trajectories are isotopic to each other.
	\item the boundary of any component consists of separating trajectories.
	\item In each horizontal strip, there is a unique geodesic, the saddle connection, connecting the two zeroes on its boundary.
	\item For a saddle-free differential $\phi$ on $X$, we have $\Pol(\phi) \neq \emptyset$. Then $\phi$ has no closed or recurrent trajectories by [\cite{BS15}, Lemma 3.1]. Thus, in the horizontal strip decomposition of $X$ with respect to $\phi$, there is only half-planes and horizontal strips.
\end{itemize}

By construction, the generic trajectories on $X$ (with respect to $\phi$) are inherited by $\mathbf{S}$, for any $\psi: \mathbf{S} \rightarrow X^\phi$, and all trajectories on $X$ (with respect to $\phi$) are inherited by $\mathbf{S}_\Delta$,
for any $\Psi: \mathbf{S}_\Delta \rightarrow X^\phi$. For instance, the generic trajectories become open arcs on $\mathbf{S}$ (as
well as on $\mathbf{S}_\Delta$) and saddle trajectories becomes closed arcs on $\mathbf{S}_\Delta$.

\begin{definition}
	Let $\psi: \mathbf{S} \rightarrow X^\phi$ be an $\mathbf{S}$-framed quadratic differential with simple zeroes, which is
	saddle-free. Then there is a triangulation $T_\psi$ on $\mathbf{S}$ induced from $\psi$, arcs are (isotopy classes of inherited) generic trajectories. Moreover, each triangle in $T_\psi$ contains exactly one zero, so $T_\psi$ becomes a decorated triangulation $\TT_\psi$ of $\mathbf{S}_\Delta$, with dual
	graph consisting of saddle trajectories.
\end{definition}

Similarly we have the following generalised definition taken from \cite{BMQS}.

\begin{definition}
	Let $\psi: \mathbf{S} \rightarrow X^\phi_\mathbf{w}$ be an $\sow$-framed quadratic differential, which is
	saddle-free. Then there is a $\mathbf{w}$-mixed-angulation $\MM_\psi$ on $\sow$ induced from $\psi$, arcs are (isotopy classes of inherited) generic trajectories. Each $(k+2)$-gon in $\MM_\psi$ contains exactly one zero of order (or weight) $k$ . Its dual graph consists of saddle trajectories.
\end{definition}

\subsection{Wall and Chamber Structure}

The moduli spaces of framed quadratic differentials admit natural wall–chamber
decompositions, where chambers correspond to saddle-free differentials and walls record degenerations of trajectory configurations.  
Crossing a wall changes the associated arc system by a single flip.  
For simple zeroes this picture follows \cite{BS15}, and for higher–order zeroes we use the wall–chamber structure developed in §4.2 in \cite{BMQS}.  In this subsection we describe this decomposition and explain how it realises the exchange graphs of Section~\ref{sec:eg} inside the strata of framed differentials.

We consider the top two parts of the stratification of $\FQuad(\mathbf{S})$ analogous to the stratification of Quad(S) from [\cite{BS15}, §5]:
\begin{align*}
	F_0(\mathbf{S}) &= \{[\mathbf{X},\phi,\psi]\in\FQuad(\mathbf{S})
	\mid \phi \text{ has no saddle trajectories}\},\\
	F_2(\mathbf{S}) &= \{[\mathbf{X},\phi,\psi]\in\FQuad(\mathbf{S})
	\mid \phi \text{ has exactly one saddle trajectory}\}.
\end{align*}
Then $B_0(S) := F_0(\mathbf{S})$ is open and dense, and $F_2(S)$ has codimension $1$. Furthermore,
$B_2(\mathbf{S}) := F_0(\mathbf{S}) \cup F_2(\mathbf{S})$ is also open and dense, and has complement of codimension $2$.
Let $U(T)$ be the subspace in $\FQuad(\mathbf{S})$ consisting of those saddle-free $[\psi]$ whose induced  triangulation is $T$. Then

\[
B_0(\mathbf{S})=\bigcup_{T\in\EG(\mathbf{S})}U(T),
\]
By the argument of [\cite{BS15}, Prop 4.9], we can see that $U(T) \simeq \HH^T$, where $$\mathbb{H}=\{z\in\mathbb{C}\mid \Im(z)>0\}$$ is the (strict) upper half-plane.
The coordinates $(u_\gamma)_{\gamma\in\mathbb{T}}$ give the complex modulus of
the horizontal strip with generic trajectory in the isotopy class $\gamma$ (see [BS, §4.5] for more detail). 
Thus the $U(T)$ are precisely the connected components of $B_0(\mathbf{S})$.

The boundary of $U(T)$ meets $F_2(\mathbf{S})$ in $2n$ connected components, which we denote $\partial_\gamma^\# U(T)$ and $\partial_\gamma^\flat U(T)$, where the coordinate $u_\gamma$ goes to the negative or positive real axis, respectively. Note that $u_\gamma$ cannot go to zero because that would correspond to two zeroes of $\phi$ coming together, leaving the stratum.

Similarly, in $\FQuad(\mathbf{S}_\Delta)$ there are cells $U(\TT)$ and a stratification $\{B_p(\mathbf{S})\}$. We can import a key lemma from \cite{BS15}, where the original statement is actually for $\Quad(\mathbf{S}_\Delta)$.

\begin{lemma}\label{lem:homotopchamberSDelta}
	Any path in $\FQuad(\mathbf{S})$ is homotopic (relative to its endpoints) to a path in $B_2(\mathbf{S})$. Therefore there is a surjective map $$\pi_1B_2(\mathbf{S}) \rightarrow \pi_1\FQuad(\mathbf{S}).$$
	The same holds replacing $\mathbf{S}$ by $\mathbf{S}_\Delta$.
\end{lemma}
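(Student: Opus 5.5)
The strategy is a general-position argument on the complex manifold $\FQuad(\mathbf{S})$, based on the fact that the complement of $B_2(\mathbf{S})$ has real codimension at least $2$. I would first recall from \cite[§5]{BS15} that $\FQuad(\mathbf{S})$ is a complex manifold of complex dimension $n$. Period coordinates give local charts: for a saddle-free or near-saddle-free differential these are the periods $u_\gamma$ of the standard saddle connections, and in general local periods of a basis of the relevant hat-homology. The stratification $F_0\cup F_2\cup F_4\cup\dots$ is by the number of independent saddle trajectories; more precisely, by the real dimension of the span of the horizontal saddle classes. The set $F_{2k}$ is locally cut out by requiring $k$ independent periods to be real, so it is a locally finite union of real submanifolds of real codimension $k$. (The notation $F_{2k}$ is shifted relative to this codimension; the indexing in the paper is by "$p$", with $B_2=F_0\cup F_2$.) The complement $\FQuad(\mathbf{S})\setminus B_2(\mathbf{S})$ is therefore a locally finite union of submanifolds of real codimension $\ge 2$. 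This is exactly the statement quoted before the lemma, and I would import it from \cite[Prop.~5.5]{BS15}, noting that it is local in period coordinates and hence insensitive to the framing.

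Given this, the main step is standard transversality. Take a path $c\colon[0,1]\to\FQuad(\mathbf{S})$ with endpoints in $B_2$; for fundamental groups the base point lies in a chamber $U(T)\subset B_0$. First homotope $c$ rel endpoints to a smooth path. Then cover its image by finitely many period-coordinate charts, so that in each chart the bad set is a finite union of submanifolds of codimension $\ge2$. A smooth map from a $1$-manifold can be perturbed, rel endpoints and by a small homotopy inside each chart, to be transverse to each such submanifold. Transversality in codimension $\ge2$ means the perturbed path misses them. Since the perturbations are small and made chart by chart, successive modifications stay inside a tubular neighbourhood of the original path. The result is a path in $B_2(\mathbf{S})$ homotopic rel endpoints to $c$.

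Surjectivity of $\pi_1B_2(\mathbf{S})\to\pi_1\FQuad(\mathbf{S})$ follows immediately: apply the previous step to loops based in $B_0$. The decorated case runs identically. $\FQuad(\mathbf{S}_\Delta)$ is a covering of $\Quad(\mathbf{S})$ (Lemma~\ref{lemma:Deck}) and of the components of $\FQuad(\mathbf{S})$, and the stratification is pulled back along it. The codimension estimate therefore lifts, as do homotopies of paths, by the path-lifting property.

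The main obstacle is the codimension claim. It requires that the locus with two or more saddle trajectories has real codimension at least $2$. The delicate cases are those where several saddle trajectories have homologically proportional periods, for instance the two boundary saddles of a ring domain; there, a single real condition might a priori force several saddles. I would handle this as in \cite{BS15}: ring domains cannot occur for differentials with a pole of order $\ge3$ in the saddle-free-adjacent regime, and otherwise the relevant saddle classes are independent in hat-homology. Where proportional saddle classes remain, I would count the stratum by the rank of the saddle span rather than by the number of saddles, and define $F_2$ accordingly.
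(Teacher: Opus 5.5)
Your overall route is exactly what the paper relies on: import from \cite{BS15} that $\FQuad(\mathbf{S})\setminus B_2(\mathbf{S})$ has real codimension $\ge 2$, push paths off it by transversality, and lift along the covering for $\mathbf{S}_\Delta$. The paper gives nothing beyond that citation. The gap is in the step you flag yourself, and the way you propose to close it does not work.

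With $F_2$ as defined in the paper (exactly one saddle trajectory), the complement of $B_2$ does \emph{not} have codimension $\ge 2$ as soon as $\mathbf{S}$ is not a disc. Take $\mathbf{S}$ an annulus, i.e.\ two poles on $\mathbb{P}^1$. There are differentials with a non-degenerate ring domain whose core $\beta$ separates the two boundary components. There are no double poles, so both boundary curves of the cylinder consist of saddle trajectories, and there are at least two of them. Yet the only condition is $\Im Z(\beta)=0$, and the cylinder survives every small deformation preserving it. So this locus $R$ is a real hypersurface contained in $\FQuad(\mathbf{S})\setminus B_2(\mathbf{S})$, and likewise upstairs in $\FQuad(\mathbf{S}_\Delta)$. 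No small perturbation moves a path that crosses $R$ transversally into $B_2$.

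Your assertion that ring domains cannot occur is therefore false. Since $B_0$ is dense, every point is ``saddle-free-adjacent'', so that qualifier does not help. Stratifying by the rank of the span of saddle classes does give the right codimension count, but redefining $F_2$ that way proves a different lemma. Points of $R$ are accumulation points of the infinitely many walls $\{\Im Z(\eta+n\beta)=0\}$, where $\eta$ is a saddle class crossing the cylinder; they are not single-flip walls. So the enlarged $B_2$ is no longer dual to $\EG(\mathbf{S})$, and the result cannot feed into Lemma~\ref{lem:embeddingFQuadS}. The same accumulation shows the bad set is not locally finite. That part is harmless: countably many closed codimension-$2$ pieces can still be avoided by a Baire-category version of transversality.

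What is missing is a genuinely non-local argument for $R$. One option: slide the crossing point inside $\{\Im Z(\beta)=0\}$ by shrinking the height $\Im Z(\eta)$ of the cylinder to $0$. At that frontier a second, independent class becomes horizontal, which is a codimension-$2$ set. Beyond it the hypersurface is crossed where the cylinder no longer exists, hence generically in $B_0\cup F_2$. This is the same kind of move as the ``codimension 2 wall'' used to avoid infinite wall-crossings in the proof of Proposition~\ref{prop:RelTrivial}.
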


Hence, we have the following result, showing that $\EG(\mathbf{S})$ is a skeleton for $\FQuad(\mathbf{S})$.

\begin{lemma}\label{lem:embeddingFQuadS}
	There is a (unique up to homotopy) canonical embedding
	$$
	\wp_{\mathbf{S}}: \EG(\mathbf{S}) \rightarrow \FQuad(\mathbf{S})
	$$
	whose image is dual to $B_2(\mathbf{S})$
	and which induces a surjection
	\[
	\wp_*:\pi_1\EG(\mathbf{S})
	\twoheadrightarrow \pi_1\FQuad(\mathbf{S}).
	\]
	The same holds replacing $\mathbf{S}$ by $\mathbf{S}_\Delta$ and restricting to the connected component containing some $\TT$.
\end{lemma}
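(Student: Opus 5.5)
We construct $\wp_{\mathbf{S}}$ directly from the chamber structure. For each vertex $T$ of $\EG(\mathbf{S})$ we pick a base point $x_T$ in the chamber $U(T)\simeq\HH^T$; the choice does not matter up to homotopy, since $U(T)$ is contractible. For each arc $\gamma\in T$ consider the forward flip $T\to T^\#$ at $\gamma$. The wall $\partial^\#_\gamma U(T)$, where $u_\gamma$ tends to the negative real axis, is a connected component of $F_2(\mathbf{S})$. By the \cite{BS15} wall-crossing analysis, it is also the wall $\partial^\flat_{\gamma^\#}U(T^\#)$ of the adjacent chamber. The reason is that as $u_\gamma$ crosses $\R_{<0}$, a single saddle trajectory appears and then disappears, and the horizontal strip with class $\gamma$ is replaced by the one with class $\gamma^\#$. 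We send the oriented edge to a path from $x_T$ that runs inside $U(T)$, crosses this wall transversally once, and ends at $x_{T^\#}$ inside $U(T^\#)$. The result is a graph embedding dual to the stratification $B_2=F_0\cup F_2$: vertices are dual to the codimension-$0$ cells and edges to the codimension-$1$ walls. Uniqueness up to homotopy follows because chambers are contractible, and each wall component is connected and contractible, being a product of half-planes and a real half-line.

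For surjectivity we use Lemma~\ref{lem:homotopchamberSDelta}. Any loop based at $x_T$ in $\FQuad(\mathbf{S})$ is homotopic to a loop in $B_2(\mathbf{S})$. By general position it crosses $F_2$ transversally finitely many times. Between crossings it stays in a single chamber, where it can be homotoped to the base point. Near each crossing it can be homotoped to the corresponding edge path across that wall; crossing a wall in the opposite direction gives the reverse orientation, which still lies in $\EG(\mathbf{S})$ because both the $\#$- and $\flat$-sides are accounted for. Concatenating gives an edge loop in $\EG(\mathbf{S})$ whose image is homotopic to the original loop, so $\wp_*$ is surjective.

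In the decorated case $\mathbf{S}_\Delta$ the same argument applies inside the component $\FQuad^{\TT}(\mathbf{S}_\Delta)$, with chambers $U(\TT)$ indexed by decorated triangulations in $\EG^{\TT}(\mathbf{S}_\Delta)$ and forward flips realised as wall-crossings in the same way. The main obstacle is the identification step: showing that each component of $F_2$ on the boundary of $U(T)$ is exactly one flip-wall, meets precisely the two chambers $U(T)$ and $U(T^\#)$, and is crossed in the direction matching the \emph{forward} flip. For this I would invoke the local model of \cite[§5]{BS15}, in which a single saddle connection degenerates in a strip of the decomposition. I expect the orientation convention (anticlockwise endpoint movement versus $u_\gamma\to\R_{<0}$) to need a careful but routine check.
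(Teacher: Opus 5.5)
Your proposal is correct and takes essentially the paper's approach. The paper gives no separate proof: it deduces the lemma directly from the chamber description ($U(T)\simeq\HH^T$ contractible, with the $2n$ wall components $\partial^\#_\gamma U(T)$ and $\partial^\flat_\gamma U(T)$ of $F_2$ on its boundary) together with Lemma~\ref{lem:homotopchamberSDelta}, and you write out that same deduction: build the dual graph of $B_2$, then homotope loops into $B_2$ so they cross $F_2$ transversally.
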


Finally, the mapping class group $\MCG(\mathbf{S})$ (see Definition~\ref{def:MCG_S_w}) acts naturally on
$\FQuad(\mathbf{S})$, and therefore also on its skeleton $\EG(\mathbf{S})$.
We denote by
\[
\EG^\star(\mathbf{S}) := \EG(\mathbf{S}) / \MCG(\mathbf{S})
\]
the resulting quotient graph, which records the wall--chamber structure of the unframed moduli space $\Quad(\mathbf{S})$.
The embedding above descends to an embedding
\[
\EG^\star(\mathbf{S}) \rightarrow \Quad(\mathbf{S}),
\]
again inducing a surjection
\[
\pi_1\EG^\star(\mathbf{S})\twoheadrightarrow\pi_1\Quad(\mathbf{S}).
\]
The canonical embedding $\pi_1\EG(\mathbf{S}) \hookrightarrow \pi_1\EG^\star(\mathbf{S})$ sends $\Rel(\mathbf{S})$ to a normal subgroup of
$\pi_1\EG^\star(\mathbf{S})$, which we will denote by $\Rel^\star(\mathbf{S})$.

Now we explicitly allow higher-order zeroes. Again we consider the top two strata
\begin{align*}
	F_0(\mathbf{S}_{\mathbf{w}}) &= \{[\mathbf{X},\phi,\psi]\in\FQuad(\mathbf{S}_{\mathbf{w}})
	\mid \phi \text{ has no saddle trajectories}\},\\
	F_2(\mathbf{S}_{\mathbf{w}}) &= \{[\mathbf{X},\phi,\psi]\in\FQuad(\mathbf{S}_{\mathbf{w}})
	\mid \phi \text{ has exactly one saddle trajectory}\},
\end{align*}
and set
\[
B_0(\mathbf{S}_{\mathbf{w}}):=F_0(\mathbf{S}_{\mathbf{w}}),\qquad
B_2(\mathbf{S}_{\mathbf{w}}):=F_0(\mathbf{S}_{\mathbf{w}})\cup F_2(\mathbf{S}_{\mathbf{w}}).
\]
Then $B_0(\mathbf{S}_{\mathbf{w}})$ is open and dense, $F_2(\mathbf{S}_{\mathbf{w}})$ has codimension~1,
and $B_2(\mathbf{S}_{\mathbf{w}})$ is open and dense with complement of codimension~2.

\begin{cor}\label{cor:pathB2}
	Suppose the negative part of the signature is not $\mathbf{w}^- = (-2)$.
	Then any path in $\Quad(\sow)$ can be homotoped
	rel.\ endpoints into $B_2(\mathbf{S}_{\mathbf{w}})$.
\end{cor}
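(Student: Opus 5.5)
We prove Corollary~\ref{cor:pathB2} by a general-position argument: the complement of $B_2(\sow)$ has real codimension at least two, and its local structure lets paths be pushed off it. We work first in the framed space $\FQuad(\sow)$ and then descend to $\Quad(\sow)$.

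\textbf{Step 1 (upstairs).} Take a path in $\FQuad^\circ(\sow)$. By the period map, \cite{BMQS} gives local charts of $\FQuad(\sow)$ in which it is a complex manifold of dimension $n = |\mathbf{M}| + \ldots$ (the number of arcs in a $\mathbf{w}$-mixed-angulation). The locus with at least two saddle trajectories is a locally finite union of subsets. Each is cut out, in period coordinates, by requiring two independent periods (of the two saddle connections) to be real. So each is contained in a countable union of real-analytic submanifolds of real codimension $\ge 2$.

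\textbf{Step 2.} The hypothesis $\mathbf{w}^-\neq(-2)$ is exactly what rules out the degenerate case. There, a single saddle condition does not give a genuine wall, because a double pole yields a ring domain whose closed trajectories form a real-codimension-one family. More precisely, recurrent or closed trajectories cannot persist on an open set of a wall when there is a pole of order $\ge 3$ (cf.\ \cite[Lemma 3.1]{BS15}). With that case excluded, the non-saddle-free locus $F_2$ is a genuine real hypersurface. The complement of $B_2$ consists of differentials with $\ge 2$ saddle trajectories, or with a non-generic (recurrent or closed) horizontal foliation. The latter occurs only when some period is real, together with a further real condition, so it too has codimension $\ge 2$. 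This mirrors \cite[\S5]{BS15}, as in Lemma~\ref{lem:homotopchamberSDelta}. Standard transversality then lets any path, after a homotopy rel endpoints, avoid a closed set that is locally a finite union of submanifolds of codimension $\ge 2$. We make the path piecewise smooth and transverse, using that the endpoints lie in $B_2$ or can be chosen there.

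\textbf{Step 3 (descending).} A path in the orbifold $\Quad(\sow)$ lifts to a path in $\FQuad(\sow)$, since $\FQuad\to\Quad$ is an orbifold covering with group $\MCG(\sow)$ (cf.\ Lemma~\ref{lemma:Deck}). The set $B_2(\sow)$ is $\MCG(\sow)$-invariant, so homotoping the lift rel endpoints into $B_2$ and projecting gives the claim. Orbifold points cause no trouble because the homotopy takes place upstairs.

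\textbf{Main obstacle.} The hardest step is Step 2: verifying that the bad locus really has codimension $\ge 2$ near higher-order zeroes. A wall $F_2$ might have strata where the single saddle connection is non-generic, e.g.\ joining a zero to itself. I would handle this by using the local period coordinates of \cite[\S4.2]{BMQS}, in which each such configuration imposes one independent real condition per saddle.
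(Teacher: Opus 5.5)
Your overall route is the paper's route: push paths off a real-codimension-two bad set upstairs, then descend along $\FQuad(\sow)\to\Quad(\sow)$. Step~3 is fine. The paper gives no separate argument; it records the corollary as a consequence of the assertion, taken from \cite{BS15} and \cite{BMQS}, that the complement of $B_2(\sow)$ has codimension two.

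\textbf{The gap.} It lies in the codimension count you supply yourself in Steps~1--2. You assume that two horizontal saddle trajectories impose two \emph{independent} real conditions. You also assume that closed trajectories need one real period ``together with a further real condition''. Both fail for a horizontal ring domain whose two boundary components are each a single closed saddle trajectory. Both boundary saddles then have period $\pm Z(\delta)$, where $\delta$ is the core curve. So this configuration is cut out by the single equation $\Im Z(\delta)=0$, plus open conditions.

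This happens under the corollary's hypotheses with all poles of order $\ge 3$. An example is two simple zeroes and two poles of order~$3$, i.e.\ the annulus with one marked point on each boundary component (the Kronecker case). Moreover, the $F_2$-walls accumulate onto this locus: the saddle connections crossing the cylinder $n$ times become horizontal at phases converging to that of $Z(\delta)$. This is the phenomenon behind the ``infinite wall-crossings'' the paper has to route around in the proof of Proposition~\ref{prop:RelTrivial}.

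So the set of differentials with at least two saddle trajectories has real-codimension-one pieces. Near them it is not a locally finite union of submanifolds. Near an interior point of such a piece, its complement is locally disconnected. Transversality therefore cannot push a path off it. You invoke exactly this mechanism (a codimension-one family of closed trajectories) to explain the exclusion $\mathbf{w}^-\neq(-2)$, but it is not confined to double poles.

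\textbf{What is missing.} You need a separate, global argument for these ring-domain loci. One option uses the fact that, inside $\{\Im Z(\delta)=0\}$, the cylinder exists only where its height is positive. The height is the imaginary part of the period of a saddle connection crossing the cylinder. Hence the locus is a one-sided wall, and its frontier lies in the codimension-two set where a crossing saddle connection is also horizontal. A path crossing the wall is then slid along it and around this edge. After that, by compactness, it meets only finitely many walls. This is the manoeuvre the paper performs by hand in the proof of Proposition~\ref{prop:RelTrivial} (``a codimension 2 wall \dots\ through which we can homotope to avoid the infinite wall-crossings''). Alternatively, you could stratify by the rank of the span of the saddle classes in the homology of the spectral cover, rather than by the number of saddle trajectories. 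Ring-domain walls would then be handled as a separate wall type. Without one of these, Step~2 does not establish the statement.
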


The components of $B_0(\mathbf{S}_{\mathbf{w}})$ are labeled by $\mathbf{w}$-mixed-angulations.

\begin{prop}\label{prop:EGB2}
	Two components of $B_0(\mathbf{S}_{\mathbf{w}})$ are joined in
	$B_2(\mathbf{S}_{\mathbf{w}})$ by an arc with exactly one non-saddle-free point
	if and only if the corresponding $\mathbf{w}$-mixed-angulations
	are related by a forward flip.
\end{prop}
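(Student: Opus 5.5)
Fix a chamber $U(\MM)$ and let $\gamma\in\MM$ be an arc. Following \cite{BMQS} (the analogue of \cite[Prop.~4.9]{BS15}), a saddle-free $\sow$-framed differential whose induced mixed-angulation is $\MM$ has period coordinates given by the complex moduli $u_\gamma$ of the horizontal strips, one per arc. This identifies $U(\MM)\simeq\HH^{\MM}$. The codimension-one boundary faces of $U(\MM)$ inside $B_2(\sow)$ are exactly the loci where one coordinate $u_\gamma$ tends to the positive or the negative real axis while all other coordinates stay in $\HH$. On such a face, the strip with generic trajectory $\gamma$ degenerates into a single saddle trajectory joining the two zeroes on its boundary. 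The value $u_\gamma\to 0$ is excluded, since it would mean two zeroes colliding, which leaves the stratum.

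The proof then has two directions.

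\emph{Only if.} Suppose an arc in $B_2(\sow)$ meets $F_2(\sow)$ in exactly one point $p$. Then $p$ lies on a face $\partial^\#_\gamma U(\MM)$ or $\partial^\flat_\gamma U(\MM)$, so near $p$ the differential has exactly one saddle trajectory, namely the degeneration of the $\gamma$-strip. I would describe the horizontal foliation near $p$ explicitly. The two polygons adjacent to $\gamma$ (a $(k_1{+}2)$-gon and a $(k_2{+}2)$-gon) merge along the saddle connection. When the direction is rotated slightly past the real axis, the separatrices emanating from the two zeroes reorganise, and the unique new generic trajectory class is obtained by rotating each endpoint of $\gamma$ one step anticlockwise (respectively clockwise) along the boundary of its polygon. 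All other strips persist, so their arcs are unchanged. Hence the two chambers are related by the forward flip $\gamma\mapsto\gamma^\#$, read in one direction or the other.

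\emph{If.} Given a forward flip at $\gamma$, I start from a point of $U(\MM)$ and move $u_\gamma$ across the negative real axis, keeping the other coordinates fixed. This gives a path with exactly one non-saddle-free point, and by the local analysis above it enters $U(\MM^\#_\gamma)$.

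The main obstacle is the local analysis at a higher-order zero. There the $k+2$ prongs mean the endpoint of $\gamma$ moves by one prong within a polygon with more sides, and I must check that the result is again a $\mathbf{w}$-mixed-angulation with the same weights. I also need the case where both sides of $\gamma$ lie in the same polygon, which is the self-folded, twice-intersecting situation. For this I would work in the flat local model, gluing two half-planes and a strip of height $\Im u_\gamma$ to the cones of angle $(k_i+2)\pi$, and track the separatrices explicitly. Finally, uniqueness of the target chamber follows because the other strip moduli determine the remaining arcs.
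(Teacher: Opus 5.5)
Your proposal is correct and follows essentially the paper's route. The paper also reduces to a neighbourhood of the single saddle connection: it homotopes the arc to a small rotation of that saddle connection across the real axis, with the rest of the surface fixed, which is your motion of $u_\gamma$ across $\R_{<0}$ or $\R_{>0}$ in the chamber coordinates $U(\MM)\simeq\HH^{\MM}$. It then reads off from a metrically correct picture of the horizontal strip decomposition that the generic trajectories change by a forward (resp.\ backward) flip, and gets the converse by building the flip configuration locally and rotating. Your check that the polygon sizes are preserved, and your separate treatment of an arc bordering the same polygon on both sides, only make precise what the paper leaves to the two-zero local picture in Figure~\ref{cap:flip}.
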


\begin{proof}
	Suppose that the two components of $B_0$ are connected by such an arc, which
	we may homotope to be a small rotation of a saddle connection near the real axis
	while fixing the geometry of the rest of the surface. The question is thus local, in the neighbourhood of this saddle connection. Using a metrically correct drawing, as in the middle of Figure~\ref{cap:flip} one checks that rotating in anticlockwise (clockwise) direction has the effect of passing from the leftmost to the rightmost picture in terms of horizontal strip decompositions. Picking a generic trajectory from the strips, we observe that this changes the mixed-angulation by a forward flip (backward flip).
	Conversely, if two mixed-angulations differ by a forward flip we take differentials locally as indicated in the metric picture and rotate the saddle connection to produce a path as required.
\end{proof}

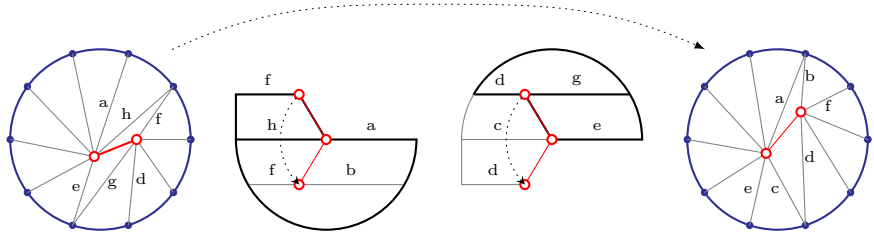
\begin{figure}[ht]
	\begin{tikzpicture}[scale=1.2,cap=round,>=latex]
		\draw[Blue, thick] (0,0) circle [radius=1cm];
		
		\foreach \x in {0,36,...,360} {
			\filldraw[Blue] (\x:1cm) circle(1pt);
		}
		
		\coordinate (Z1) at (0:0.4cm);
		\coordinate (Z2) at (250:0.2cm);

		\draw[red, thick] (Z1)  -- (Z2);
		
		\foreach \x in {36,72,108,144,180,216,252} {
			\draw[gray] (Z2)  -- (\x:1cm);
		}
		
		\foreach \x in {0,36,252,288,324} {
			\draw[gray] (Z1)  -- (\x:1cm);
		}

		\draw[red,thick,fill=white](Z1)circle(00.05);
		\draw[red,thick,fill=white](Z2)circle(00.05);
		
		\node at ($(Z2)!0.5!(82:1cm)$) {\tiny a};
		\node at ($(Z2)!0.5!(242:1cm)$) {\tiny e};
		\node at ($(Z2)!0.5!(50:1cm)$) {\tiny h};
		\node at ($(Z1)!0.5!(28:1cm)$) {\tiny f};
		\node at ($(Z1)!0.5!(262:1cm)$) {\tiny g};
		\node at ($(Z1)!0.5!(300:1cm)$) {\tiny d};
		
		\draw[->,dotted] (0.8,1) to[out= 25, in=180] (3.75,1.5) to[out= 0, in=155] (6.7,1);
		
		
		\coordinate (B) at (2.5,0);
		
		\coordinate (Z1) at ($(B) + (0,0)$);
		\coordinate (Z2) at ($(B) + (-0.3,0.5)$);
		\coordinate (Z3) at ($(B) + (-0.3,-0.5)$);
		
		\draw[thick] (Z1) ++(180:1cm) arc [start angle=180, end angle=360, radius=1cm];
		
		\draw[thick] (Z1)  -- (Z2);
		\draw[red] (Z1)  -- (Z2);
		
		\draw[red] (Z1)  -- (Z3);
		
		\draw[thick] (Z1) -- ($(Z1) + (-1,0)$);
		\node[above] at ($(Z1) + (-0.6,0)$) {\tiny h};
		
		\draw[thick] (Z1) -- ($(Z1) + (1,0)$);
		\node[above] at ($(Z1) + (0.5,0)$) {\tiny a};
		
		\draw[thick] (Z2) -- ($(Z2) + (-0.7,0)$) --($(Z2) + (-0.7,-0.5)$);
		\node[above] at ($(Z2)+(-0.35,0)$) {\tiny f};
		
		\draw[gray] ($(Z3) + (-0.55,0)$) -- (Z3) -- ($(Z3) + (1.15,0)$);
		\node[above] at ($(Z3)+(-0.3,0)$) {\tiny f};
		\node[above] at ($(Z3) + (0.575,0)$) {\tiny b};
		
		\draw[red,thick,fill=white](Z1)circle(00.05);
		\draw[red,thick,fill=white](Z2)circle(00.05);
		\draw[red,thick,fill=white](Z3)circle(00.05);
		
		\draw [stealth-,dotted] (Z3) to [out=135,in=225] (Z2);
		
		
		\coordinate (C) at (5,0);
		
		\coordinate (Z1) at ($(C) + (0,0)$);
		\coordinate (Z2) at ($(C) + (-0.3,0.5)$);
		\coordinate (Z3) at ($(C) + (-0.3,-0.5)$);
		
		\draw[thick] (C) ++(0:1cm) arc [start angle=0, end angle=150, radius=1cm];
		
		\draw[gray] (C) ++(150:1cm) arc [start angle=150, end angle=180, radius=1cm];

		\draw[thick] (Z1)  -- (Z2);
		\draw[red] (Z1)  -- (Z2);
		\draw[red] (Z1)  -- (Z3);
		
		\draw[gray] (Z1) -- ($(Z1) + (-1,0)$);
		\node[above] at ($(Z1)+(-0.6,0)$) {\tiny c};
		
		\draw[thick] (Z1) -- ($(Z1) + (1,0)$);
		\node[above] at ($(Z1)+(0.5,0)$) {\tiny e};
		
		\draw[thick] (Z2) -- ($(Z2) + (-0.55,0)$);
		\node[above] at ($(Z2)+(-0.275,0)$) {\tiny d};
		
		\draw[thick] (Z2) -- ($(Z2) + (1.15,0)$);
		\node[above] at ($(Z2)+(0.575,0)$) {\tiny g};
		
		\draw[gray] ($(Z3) + (-0.7,0.5)$) -- ($(Z3) + (-0.7,0)$) -- (Z3);
		\node[above] at ($(Z3) + (-0.35,0)$) {\tiny d};
		
		\draw[red,thick,fill=white](Z1)circle(00.05);
		\draw[red,thick,fill=white](Z2)circle(00.05);
		\draw[red,thick,fill=white](Z3)circle(00.05);
		
		\draw [stealth-, dotted] (Z3) to [out=135,in=225] (Z2);
		
		
		\coordinate (D) at (7.5,0);
		
		\draw[Blue, thick] (D) circle [radius=1cm];
		
		\foreach \x in {0,36,...,360} {
			\filldraw[Blue] ($(D)+(\x:1cm)$) circle(1pt);
		}
		
		\coordinate (Z1) at ($(D)+(50:0.4cm)$);
		\coordinate (Z2) at ($(D)+(230:0.2cm)$);
		
		\foreach \x in {0,36,72,288,324} {
			\draw[gray] (Z1)  -- ($(D)+(\x:1cm)$);
		}
		
		\foreach \x in {72,108,144,180,216,252,288} {
			\draw[gray] (Z2)  -- ($(D)+(\x:1cm)$);
		}
		
		\node at ($(Z2)+(0.15,0.6)$) {\tiny a};
		\node at ($(Z1)+(0.1,0.4)$) {\tiny b};
		\node at ($(Z2)+(0.1,-0.4)$) {\tiny c};
		\node at ($(Z1)+(0.1,-0.5)$) {\tiny d};
		\node at ($(Z2)+(-0.2,-0.4)$) {\tiny e};
		\node at ($(Z1)+(0.3,0.07)$) {\tiny f};

		\draw[red] (Z1)  -- (Z2);
		
		\draw[red,thick,fill=white](Z1)circle(00.05);
		\draw[red,thick,fill=white](Z2)circle(00.05);
		
	\end{tikzpicture}
	\caption{Horizontal foliation before and after rotating} \label{cap:flip}
\end{figure}

Thus the oriented exchange graph $\EG(\mathbf{S}_{\mathbf{w}})$ of $\mathbf{w}$-mixed-angulations
again encodes the adjacency of chambers in $B_0(\mathbf{S}_{\mathbf{w}})$.

\begin{lemma} \label{lem:embeddingFQuadS_w}
	There exists a canonical embedding (unique up to homotopy)
	\[
	\wp_{\mathbf{S}_{\mathbf{w}}}:
	\EG^\circ(\mathbf{S}_{\mathbf{w}})
	\longrightarrow
	\FQuad^\circ(\mathbf{S}_{\mathbf{w}})
	\]
	whose image is dual to $B_2(\mathbf{S}_{\mathbf{w}})$ and which induces a surjection
	\[
	\wp_*:\pi_1\EG^\circ(\mathbf{S}_{\mathbf{w}})
	\twoheadrightarrow \pi_1\FQuad^\circ(\mathbf{S}_{\mathbf{w}}).
	\]
\end{lemma}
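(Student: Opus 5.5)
We will follow the strategy of Lemma~\ref{lem:embeddingFQuadS}, now in the weighted setting. Fix a base $\mathbf{w}$-mixed-angulation $\MM_0$ in the chosen component $\EG^\circ(\sow)$. First we construct $\wp_{\sow}$ on vertices. By the weighted analogue of the Bridgeland--Smith description, the chamber $U(\MM)\subset B_0(\sow)$ of saddle-free framed differentials inducing $\MM$ is parametrised by the periods of the horizontal strips. It is therefore homeomorphic to a product of half-planes, hence contractible and nonempty. We send the vertex $\MM$ to a chosen point $p_\MM\in U(\MM)$; contractibility makes this choice unique up to homotopy.

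Next we define $\wp_{\sow}$ on edges. For a forward flip $\MM\to\MM'$, Proposition~\ref{prop:EGB2} provides an arc in $B_2(\sow)$ from $U(\MM)$ to $U(\MM')$ that crosses the wall $F_2(\sow)$ exactly once. This arc is obtained by rotating the unique saddle connection anticlockwise through the real axis. We send the edge to this arc, concatenated inside the contractible chambers with paths to $p_\MM$ and $p_{\MM'}$. To see that the edge image is canonical up to homotopy, we would check that the relevant wall component $\partial^\#_\gamma U(\MM)$ is connected; it is again a product of half-planes with one half-line factor. Distinct edges then cross distinct wall components, so the image is dual to the chamber structure of $B_2(\sow)$, and the component $\EG^\circ$ lands in a single component $\FQuad^\circ(\sow)$ by connectedness.

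For surjectivity on $\pi_1$, take a loop in $\FQuad^\circ(\sow)$ based at $p_{\MM_0}$. By Corollary~\ref{cor:pathB2} (lifted to the framed space, since $\FQuad^\circ\to\Quad$ is a covering away from orbifold points and the codimension-two complement lifts), we homotope the loop into $B_2(\sow)$. We then put it in general position with respect to the codimension-one wall $F_2(\sow)$, so that it crosses walls transversally finitely often. Each crossing is, up to homotopy within $B_2$, one of the rotation arcs, i.e.\ a forward flip or its inverse. The segments between crossings lie in single contractible chambers. Hence the loop is homotopic to the image of an edge path in $\EG^\circ(\sow)$, which gives surjectivity of $\wp_*$.

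The main obstacle is the local analysis at the walls. We must show that each wall component of $F_2(\sow)$ is connected and meets exactly two chambers, and that a transverse crossing is homotopic to the standard rotation. With higher-order zeroes a saddle connection can join a zero to itself or bound a degenerate strip. We would handle this using the period coordinates of \cite{BMQS}: near the wall, the one saddle period $u_\gamma$ crosses $\mathbb{R}$ while the other coordinates remain in half-planes. This gives a local product chart $\HH^{n-1}\times(\text{disc})$ in which the wall is a half-line, so both the connectedness and the crossing claim become evident.
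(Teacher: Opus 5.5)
Your proposal is correct and matches the argument the paper relies on. The paper gives no separate proof; it obtains the lemma, exactly as you do, from three ingredients: the chamber description of \cite{BMQS}, in which the components of $B_0(\sow)$ are labelled by $\mathbf{w}$-mixed-angulations; Proposition~\ref{prop:EGB2}, which identifies wall-crossings with forward flips; and Corollary~\ref{cor:pathB2}, which pushes paths into $B_2$.

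One step needs tidying: the lifting. A homotopy in the orbifold $\Quad(\sow)$ generically meets the real-codimension-two orbifold locus in isolated points, and such a homotopy need not lift. So instead of lifting Corollary~\ref{cor:pathB2}, run the codimension-two argument directly in $\FQuad^\circ(\sow)$, where the paper already asserts that $B_2(\sow)$ has complement of codimension~$2$.
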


\begin{rmk}\label{rem:EG_wstar}
	As in the simple-zero case, $\MCG(\mathbf{S}_{\mathbf{w}})$ (see Definition~\ref{def:MCG_S_w}) acts on
	$\FQuad^\circ(\mathbf{S}_{\mathbf{w}})$ and on $\EG(\mathbf{S}_{\mathbf{w}})$.
	We write
	\[
	\EG^\star(\sow)
	:= \EG(\sow)/\MCG(\mathbf{S}_{\mathbf{w}})
	\]
	for the resulting quotient graph.
	Since stabilisers may be non-trivial, $\EG^\star(\mathbf{S}_{\mathbf{w}})$ is naturally a $1$-dimensional orbifold (a graph of groups in the sense of Bass–Serre theory), reflecting the orbifold structure of $\Quad(\sow)$.
	The canonical embedding $\pi_1\EG(\sow) \hookrightarrow \pi_1\EG^\star(\sow)$ sends $\Rel(\sow)$ to a normal subgroup in $\pi_1\EG^\star(\sow)$, which we will denote by $\Rel^\star(\sow)$.
\end{rmk}

\begin{lemma} \label{lem:embeddingQuadS_w}
	There exists a canonical embedding (unique up to homotopy)
	\[
	\wp^\star_{\mathbf{S}_{\mathbf{w}}}:
	\EG^\star(\sow)
	\longrightarrow
	\Quad(\sow)
	\]
	which induces a surjection
	\[
	\wp_*:\pi_1\EG^\star(\sow)
	\twoheadrightarrow \pi_1\Quad(\sow).
	\]
	of their orbifold fundamental groups.
\end{lemma}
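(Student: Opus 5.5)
The plan is to deduce Lemma~\ref{lem:embeddingQuadS_w} from its framed version, Lemma~\ref{lem:embeddingFQuadS_w}, by passing to the quotient by $\MCG(\sow)$. The approach has three steps: make the embedding $\wp_{\sow}$ equivariant, descend it to the quotients, and compare orbifold fundamental groups through the resulting extensions.

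\emph{Equivariance.} The chambers of $B_0(\sow)$ are labelled by $\mathbf{w}$-mixed-angulations. The action of $\MCG(\sow)$ on $\FQuad(\sow)$ by change of framing permutes these chambers exactly as it permutes mixed-angulations. By Proposition~\ref{prop:EGB2}, it also permutes the codimension-one walls in $F_2(\sow)$ exactly as it permutes forward flips. I would build $\wp_{\sow}$ in two stages:
\begin{enumerate}
\item choose one basepoint in each chamber orbit and one wall-crossing arc in each flip orbit;
\item transport these choices by the group action.
\end{enumerate}
This gives an $\MCG(\sow)$-equivariant map. Where a mixed-angulation has a nontrivial stabiliser, I would take the basepoint to be a fixed point of that stabiliser inside the chamber. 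This is possible because the chamber is $\HH^{n}$ and the stabiliser permutes its coordinates $u_\gamma$, so the fixed locus is convex and nonempty. Flip arcs with nontrivial stabiliser are handled in the same way. Uniqueness up to homotopy then follows from contractibility of chambers and of wall neighbourhoods.

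\emph{Descent.} Taking the quotient gives an orbifold map $\wp^\star_{\sow}\colon \EG^\star(\sow)\to \Quad(\sow)$, where $\EG^\star(\sow)$ is viewed as a graph of groups as in Remark~\ref{rem:EG_wstar}. Both orbifold fundamental groups fit into extensions by $\MCG(\sow)$ relative to the framed versions:
\[
1\to \pi_1\EG^\circ(\sow)\to \pi_1\EG^\star(\sow)\to \MCG(\sow)^\circ\to 1,
\]
and analogously for $\FQuad^\circ(\sow)\to\Quad(\sow)$. Here $\MCG(\sow)^\circ$ denotes the subgroup preserving the chosen component. The map $\wp_*$ induces a morphism between these two extensions. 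It is the identity on the quotient $\MCG(\sow)^\circ$ and is surjective on the kernel by Lemma~\ref{lem:embeddingFQuadS_w}. A five-lemma type argument, using only surjectivity on the kernel and on the quotient, then gives surjectivity of $\pi_1\EG^\star(\sow)\to\pi_1\Quad(\sow)$.

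\emph{Main obstacle.} The delicate point is the orbifold bookkeeping where stabilisers are nontrivial, as in $\Quad(2,1,1,-8)$. To get the displayed extensions, I would use that $\MCG(\sow)$ acts properly discontinuously on $\FQuad^\circ(\sow)$, arguing as in Lemma~\ref{lemma:Deck}, and simply connected covers. Realising the orbifold fundamental group as deck transformations of the universal cover of $\FQuad^\circ(\sow)$ sidesteps the failure of the action to be free. An alternative, if cleaner, is to argue directly: given any orbifold loop in $\Quad(\sow)$, lift it to a path in $\FQuad^\circ(\sow)$ between two framings differing by an element $g$. Corollary~\ref{cor:pathB2} homotopes this path into $B_2(\sow)$. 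It then reads off as an edge path in $\EG(\sow)$ from $\MM$ to $g\MM$, which projects to a loop in $\EG^\star(\sow)$.
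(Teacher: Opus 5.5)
Your proposal is correct and follows the route the paper intends. The paper gives no separate proof; it obtains the unframed statement by descending the framed skeleton of Lemma~\ref{lem:embeddingFQuadS_w} to the quotient by $\MCG(\sow)$, and in its discussion of the kernel in the higher-order case it uses exactly your morphism of extensions $1\to\pi_1\EG^\circ(\sow)\to\pi_1\EG^\star(\sow)\to\MCG(\sow)\to 1$ over $1\to\pi_1\FQuad^\circ(\sow)\to\pi_1\Quad(\sow)\to\MCG(\sow)\to 1$. What you add is the explicit equivariant construction (basepoints in the convex fixed loci of stabilisers) and the elementary surjectivity argument on the middle term, which also works without the paper's assumption that edge stabilisers are trivial.
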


\section{Exchange Graphs and Moduli Spaces}\label{sec:egandmod}

In Section~\ref{sec:quad} we associated to each saddle-free framed quadratic differential a triangulation or, in the higher-order case, a $\mathbf{w}$-mixed-angulation.
This yields a canonical map from the exchange graphs introduced in
Section~\ref{sec:eg} to the corresponding strata of (framed) quadratic differentials.  
The purpose of this section is to compare the (orbifold) fundamental groups on both sides.

Our first objective is to show that the relations in the appropriate exchange graphs, established in  Section~\ref{sec:eg}, vanish under the induced surjection $\rho^*$ on the fundamental groups.
This is carried out first in the simple-zero (decorated) case in Subsection~\ref{subsec:4.1}, following the strategy of~\cite{KQ2} but formulated here directly in the language of quadratic differentials rather than stability conditions.
The corresponding result in the higher--order--zero (weighted) case is proved in Subsection~\ref{subsec:4.3} using the framework of~\cite{BMQS}.  
In this weighted setting the fact that all these relations map to zero is new.

A key difference between the two settings is that, for simple zeroes, these relations \emph{generate the entire kernel}.  
Consequently, the induced map on fundamental groups becomes an isomorphism after quotienting by the square, pentagon, and hexagon relations.  
Although this is implicit in~\cite{KQ2}, the statement does not appear explicitly there; we give a direct proof in Subsection~\ref{subsec:4.2}.

In the unframed situation the strata are often \emph{orbifolds} rather than manifolds, due to symmetries permuting zeroes of identical order.  
Accordingly, the exchange graph also becomes an orbifold object, and its fundamental group must be interpreted as an \emph{orbifold fundamental group}.  
To handle this systematically we include Subsection~\ref{subsection:bass-serre}, which reviews the necessary background in the simplified setting relevant for us: Graphs with finite stabilizer groups at vertices and trivial stabilizers on edges.  
There we describe the orbifold fundamental group via the Bass-Serre theory of graphs of groups and record an explicit presentation (Proposition~\ref{prop:orbpi1-graph}) that will be used in our computations in Section~\ref{sec:examples}.

\subsection{Simple Zeroes}\label{subsec:4.1}

We begin with the decorated setting, where the underlying surface is
$\mathbf{S}_\Delta$ and the relevant loops are those of
Definition~\ref{def:DMS_Rel}.  
Our goal is to show that these square, pentagon, and hexagon relations vanish
under the map 
\[
\pi_1\EG^{\mathbb{T}}(\mathbf{S}_\Delta)
\longrightarrow
\pi_1\FQuad^{\mathbb{T}}(\mathbf{S}_\Delta)
\]
from Lemma~\ref{lem:embeddingFQuadS}.
In this case the trajectory structure involved in each local configuration is fully analysed in §4.2 and §4.5 (in particular Proposition 4.14) \cite{KQ2}, and the vanishing of all these loops follows by the same ideas, rewritten here in the language of quadratic differentials rather than stability conditions.  

Unlike in the weighted setting, these relations in fact generate the entire kernel, which we will show in subsection~\ref{subsec:4.2}.

\begin{prop}\label{prop:RelTrivial}
	Let $\mathbf{S}_\Delta$ be a decorated marked surface and
	$\EG^{\mathbb{T}}(\mathbf{S}_\Delta)$ its oriented exchange graph.
	Under the embedding
	\[
	\wp_{\mathbf{S}_\Delta}:
	\EG^{\mathbb{T}}(\mathbf{S}_\Delta)
	\hookrightarrow
	\FQuad^{\mathbb{T}}(\mathbf{S}_\Delta),
	\]
	the loops corresponding to the square, pentagon, and hexagon relations (see Definition~\ref{def:DMS_Rel}) in
	$\EG^{\mathbb{T}}(\mathbf{S}_\Delta)$ are null-homotopic.
	Equivalently, the subgroup
	$\Rel(\mathbf{S}_\Delta)$
	lies in the kernel of the induced map
	\[
	\pi_1\EG^{\mathbb{T}}(\mathbf{S}_\Delta)
	\longrightarrow
	\pi_1\FQuad^{\mathbb{T}}(\mathbf{S}_\Delta).
	\]
\end{prop}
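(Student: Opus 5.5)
Each relation loop is local: it is determined by two arcs $\gamma_1,\gamma_2$ of a triangulation $\TT$, and every flip in the loop changes only arcs in the union of the (at most four) triangles adjacent to $\gamma_1$ or $\gamma_2$. The plan is to realise each loop in $\FQuad^{\TT}(\mathbf{S}_\Delta)$ as a small loop around a codimension-two locus, and to show it bounds a disc. Here is the setup. Choose a saddle-free differential $\phi$ in the chamber $U(\TT)\simeq\HH^{\TT}$. The two relevant strip coordinates are $u_1=u_{\gamma_1}$ and $u_2=u_{\gamma_2}$; we keep all other coordinates fixed at generic values in $\HH$. By Proposition~\ref{prop:EGB2} (its simple-zero version) and the description of the walls $\partial^\#_\gamma U$ and $\partial^\flat_\gamma U$, each forward flip at $\gamma_i$ is realised by rotating the corresponding saddle connection, i.e.\ by moving the period of the dual closed arc $\eta_i$ across the real axis. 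The whole loop therefore lives in a two-complex-parameter slice $D$: the periods $(z_1,z_2)$ of the two saddle connections $\eta_1,\eta_2$, with the remaining periods frozen. Since periods of a basis of closed arcs give local coordinates on $\FQuad$, the slice $D$ is a product of punctured neighbourhoods; the punctures come from $z_i\neq 0$, since zeroes are not allowed to collide.

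Next, in each of the three cases we identify the wall-and-chamber structure of $D$ explicitly. In the square case $\eta_1,\eta_2$ lie in disjoint quadrilaterals, so $D\simeq(\mathbb C^*)^2$ and the walls are $\{z_1\in\R\}$ and $\{z_2\in\R\}$. The walls cross transversally, and the four chambers around the codimension-two locus $\{z_1,z_2\in\R_{>0}\}$ (or the analogous one) give the square. The loop is the boundary of a small bidisc around a point with both periods real and of the same sign. That point lies in the codimension-two stratum, which is still inside the stratum because no saddle triangle degenerates. So the loop is null-homotopic. In the pentagon and hexagon cases the two saddle connections share one or two zeroes, and a third saddle connection with period $z_1+z_2$ (or $z_1-z_2$) becomes relevant. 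Three real-wall hypersurfaces $\{z_1\in\R\},\{z_2\in\R\},\{z_1+z_2\in\R\}$ meet along the locus where all these periods are real and nonzero. Locally around such a point, the chambers are cut out by the sign patterns of $\Im z_1,\Im z_2,\Im(z_1+z_2)$. We check by the metric pictures (as in Figure~\ref{cap:flip}) that the five (respectively six) chambers encountered are exactly the triangulations of the pentagon (respectively of the annular configuration). The loop again bounds a small disc transverse to a codimension-two stratum $\{z_1,z_2\in\R\}$ consisting of differentials with two collinear saddle connections. We then invoke Lemma~\ref{lem:homotopchamberSDelta} and the structure $B_2\subset \FQuad$ to conclude that the loop is trivial in $\pi_1\FQuad^{\TT}(\mathbf{S}_\Delta)$, because it is contractible in a neighbourhood of that codimension-two point, which is a ball.

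The main obstacle is the pentagon and hexagon analysis. One must verify that the codimension-two point (two saddle trajectories simultaneously horizontal) is an honest point of $\FQuad^{\TT}(\mathbf{S}_\Delta)$, not a collision of zeroes. One must also verify that a small neighbourhood of it meets exactly the predicted chambers in the predicted cyclic order, with the orientation of the hexagon $(yx^2)^{-1}x^2y$ matching forward versus backward flips. The plan is to handle this by choosing the base point so that both saddle connections are real, positive and of distinct lengths, and by reading off the horizontal strip decompositions after perturbing the arguments, following \cite[\S4.5, Prop.~4.14]{KQ2}. The hexagon case needs extra care, since the two triangles glue along both $\gamma_1$ and $\gamma_2$, so the three periods $z_1,z_2,z_1+z_2$ may be realised by different saddle connections on the annulus. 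We would first treat the local model on a disc or annulus with a standard polynomial-type differential, and then transplant it via the freedom of the fixed coordinates.
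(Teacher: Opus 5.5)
Your treatment of squares and pentagons is fine and is essentially the paper's argument. The paper's single product strip (a rotation window times a chamber-preserving deformation that swaps the phase order of the two saddle connections) is a disc through exactly the codimension-two point where both are horizontal.

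The gap is the hexagon. The relation in Definition~\ref{def:DMS_Rel} is $x^2y=yx^2$, so on \emph{each} side of the loop the same arc is flipped forward twice in a row. Between these two flips the rotating saddle connection is the same closed arc $\eta_2$. A forward flip means that $Z(\eta_2)$ crosses the real axis anticlockwise, so two consecutive such crossings force $\arg Z(\eta_2)$ to increase by $\pi$. Since $Z(\eta_2)\neq 0$ on the whole stratum, this cannot happen inside a small ball around any point of $\FQuad^{\TT}(\mathbf{S}_\Delta)$. Hence the hexagon is never the link of a codimension-two stratum, and no perturbation of a single base point sees both flips of $x$. A link of three walls through a point would instead give a relation of the shape $a\,c\,b=b\,c'\,a$ with three distinct walls, not $x^2y=yx^2$.

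Your local wall picture also fails at the point you want to use. When $\eta_1$ and $\eta_2$ are both horizontal, $\eta_1\cup\eta_2$ is a horizontal closed curve around the core of the annulus formed by the two common triangles, so every class $a\eta_1+b\eta_2$ has real period there. For one of the two phase orderings, all saddle connections winding around this annulus exist (for suitable orientations, the classes $\eta_i+n(\eta_1+\eta_2)$, $n\ge 0$). So infinitely many walls emanate from this point, and they accumulate along the locus where the core class $\eta_1+\eta_2$ is horizontal and a ring domain appears. Thus $\{z_1+z_2\in\R\}$ is not a flip wall, and every small loop around the point crosses infinitely many walls.

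What is needed is a non-local disc, which is what the paper builds. It glues three rectangles $I_j\times\ell_j$, each a rotation window times a chamber-preserving period deformation. One of these deformations rotates $\alpha_2$ by almost $\pi$ relative to $\alpha_1$, and this is what produces the double flip. The rotation rows on the phase ordering with infinitely many saddle connections cross infinitely many walls; the paper glues them to one another so they lie in the interior. The boundary of the disc then consists of exactly the six single-flip passages.
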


\begin{proof}
	\noindent\textbf{Strategy of the proof.}
	We construct three contractible product strips in
	$\FQuad^{\mathbb{T}}(\mathbf{S}_\Delta)$ whose boundaries glue to a topological disc.
	Via the identification of chambers with triangulations of $\mathbf{S}_\Delta$, the remaining boundary of this disc is precisely the hexagon loop in $\EG^{\mathbb{T}}(\mathbf{S}_\Delta)$, yielding an explicit null-homotopy.
 	The square and pentagon cases are analogous and simpler.
	\medskip
	
	We now carry out this construction for a fixed hexagon.
	Fix a hexagon subgraph in $\EG^\mathbb{T}(\mathbf{S}_\Delta)$ determined by two arcs that intersect twice in a triangle. Choose $q_0\in \FQuad^\mathbb{T}(\mathbf{S}_\Delta)$ whose triangulation $\TT_0$ is the initial vertex of this hexagon, and let
	$\alpha_1,\alpha_2$ be the corresponding saddle connections.
	
	\smallskip
	\noindent\textbf{Period normalization.}
	Fix small $\delta>0$, a positive $m < 1$, and a large $M\gg1$,
	and arrange
	\begin{align*}
	Z_{q_0}(\alpha_1)=e^{(1-\delta) \pi i},\qquad
	Z_{q_0}(\alpha_2)=m\,e^{(1-3\delta) \pi i},\qquad\\
	Z_{q_0}(\beta)=M\,e^{1/2 \pi i}\quad\text{for all other  standard saddles }\beta.
	\end{align*}
	By standard saddle we mean the saddle connections inside inside the horizontal strips given by $q_0$.
	This choice ensures that, along suitably chosen deformations, the only intended wall contacts on the \emph{boundaries} below are those producing the two flips adjacent to $\TT_0$.
	
	\smallskip
	\noindent\textbf{Three product strips.}
	We construct three continuous maps
	\[
	\Phi_j:\ I_j \times \ell_j\ \longrightarrow\ \mathrm{FQuad}^\TT(\mathbf{S}_\Delta)\qquad (j=1,2,3),
	\]
	where each $\ell_j:[0,1]\to \mathrm{FQuad}^\TT(\mathbf{S}_\Delta)$ is a path of differentials (a
	\emph{period deformation}) and each $I_j$ is a \emph{rotation window}:
	\[
	I_1=[0,4\delta],\qquad I_2=[-2\delta,0],\qquad I_3=[-2\delta,2\delta].
	\]
	We set
	\[
	\Phi_j(\theta,t)\ :=\ e^{\pi i\theta}\cdot \ell_j(t).
	\]
	
	\begin{itemize}
		\item \emph{Strip 1: $[0,4\delta] \times \ell_1$ (Figure~\ref{hexagoninquad1})}.
		In all pictures for this proof the gray area denotes a half-plane.
		Choose $\ell_1$ with $\ell_1(0)=q_0$ so that along $\ell_1$ the saddle connection $\alpha_1$ rotates by $-2\delta$ and the saddle connection $\alpha_2$ by $2\delta$, while all other standard saddles of $q_0$ stay fixed. We end up with a differential $q'_0$, with
		\begin{align*}
			Z_{q'_0}(\alpha_1)=e^{\pi i(1-3\delta)},\qquad
			Z_{q'_0}(\alpha_2)=m\,e^{\pi i(1-1\delta)},\qquad\\
			Z_{q'_0}(\beta)=M\,e^{1/2 \pi i}\quad\text{for all other }\beta.
		\end{align*}
		The line is chamber preserving. The strip consists of all differentials we encounter by rotating each point in $\ell_1$ by $4\delta$. Let $e^{4\delta \pi i} q'_0 =: q_1$
		
		The saddle connections $\alpha_1$ and $\alpha_2$ are also standard saddle connections for the rotated differential $e^{4\pi i \delta} q_0$, but with reversed orientation. Hence, moving along the path $\ell_1 \times [4\delta]$ amounts to rotating these standard saddles. During this rotation the phases of $\alpha_1$ and $\alpha_2$ remain away from the real axis, so they never become horizontal. Hence this path is chamber preserving as well.
		
		In the second row $[0,4\delta] \times \ell_1(1)$ we crosses infinite walls, whereas in the first row $[0,4\delta] \times \ell_1(0)$ we only cross two walls. There is a codimension 2 wall inside the strip through which we can move homotope to avoid the infinite wall-crossings.
		
		\input{3_Paper_S/Pictures/fig_hexagon1}
		
		\item \emph{Strip 2: $[-2\delta,0] \times \ell_1$ (Figure~\ref{hexagoninquad3}).}
		We now start with $q_1$.
		\begin{align*}
			Z_{q_1}(\alpha_1)=e^{\delta \pi i},\qquad
			Z_{q_1}(\alpha_2)=m\,e^{3\delta \pi i},\qquad\\
			Z_{q_1}(\beta)=M\,e^{\pi i/2}\quad\text{for all}\beta.
		\end{align*}
		Choose $\ell_2$ with $\ell_2(0) = q_1$ so that along $\ell_2$ the saddle connections $\alpha_2$ rotates by $1-4\delta$, while all other standard saddles of $q_1$ including $\alpha_1$  stay fixed. We end up with a differential $q'_1$, with
		\begin{align*}
			Z_{q'_1}(\alpha_1)=e^{\pi i \delta},\qquad
			Z_{q'_1}(\alpha_2)=m\,e^{(1-\delta)\pi i},\qquad\\
			Z_{q'_1}(\beta)=M\,e^{((1/2) + 4\delta)\pi i}\quad\text{for all other }\beta.
		\end{align*}
		The line is chamber preserving. The new strip consists of all differentials we encounter by rotating each point in $\ell_2$ by~$-2\delta$.
		
		After rotating $q_1$ by $-2\delta$, the standard saddle $\alpha_1$ is replaced by $\alpha_3$, which becomes standard only after this rotation.
		The path $\ell_2 \times [-2\delta,0]$ amounts to rotating $\alpha_2$ by $1-4\delta$ while not changing $\alpha_1$, but $\alpha_3$ gets deformed as a consequence.
		The phases of $\alpha_3$ and $\alpha_2$ remain away from the real axis, so they never become horizontal. Hence this path is chamber preserving as well.
		
		\input{3_Paper_S/Pictures/fig_hexagon3}
		
		\item \emph{Strip 3: $[-4\delta,0] \times \ell_2$. (Figure~\ref{hexagoninquad2}).}
		
		We now start with $q_2 := e^{2\delta \pi i} q'_1$.
		\begin{align*}
			Z_{q_2}(\alpha_1)=e^{\delta \pi i}, \; \;
			Z_{q_2}(\alpha_2)=m\,e^{3\delta \pi i}, \\
			Z_{q_2}(\beta)=M\,e^{\pi i/2} \text{ for all other } \beta.
		\end{align*}
		
		Choose $\ell_2$ with $\ell_1(0)=q_2$ so that along $\ell_2$ the saddle connection $\alpha_1$ rotates by $2\delta$ and the saddle connection $\alpha_2$ by $-2\delta$, while all other standard saddles of $q_0$ stay fixed. Let $q'_2$ be the new differential.
		The line is chamber preserving. The strip consists of all differentials we encounter by rotating each point in $\ell_1$ by $-4\delta$. 
		After rotating both ends of $\ell_2$ by $-4\delta$ the standard saddles $\alpha_1$ and $\alpha_2$ are still saddles but with reversed orientation. Hence, moving along the path $\ell_2 \times [-4\delta,0]$ amounts to rotating these standard saddles. During this rotation the phases of $\alpha_1$ and $\alpha_2$ remain away from the real axis, so they never become horizontal. Hence this path is chamber preserving as well.
		
		The rotation in the first row leads to infinitely many wall-crossings but the second row only two walls are crossed.
		\input{3_Paper_S/Pictures/fig_hexagon2}
	\end{itemize}
	
	\smallskip
	\noindent\textbf{Gluing.}
	
	The both horizontal lines of the second strip each coincide with a boundary segments (where the infinite wall crossings happen) of the first and third strip. 
	
	Choose the endpoints of $\ell_1,\ell_2,\ell_3$ so that the following identifications hold:
	\[
	\Phi_1(1,\theta + 2\delta)=\Phi_2(0,\theta - 2\delta)\quad(\theta\in[0, 2\delta]),
	\]
	and
	\[
	\Phi_2(1,\theta - 2\delta)=\Phi_3(0,\theta - 4\delta)\quad(\theta\in[0,2\delta]).
	\]
	Gluing the three rectangles along these rotation sides yields a topological $2$--cell (a disc)
	$D$. Define $\Phi:D\to\mathrm{FQuad}^\circ(S_\triangle)$.
	this is well-defined and continuous by construction.
	
	\smallskip
	\noindent\textbf{Boundary is the hexagon loop.}
	The boundary $\partial D$ consists of six horizontal segments (the $\theta$-rotation edges)
	and three chamber preserving arcs (the vertical sides). The six horizontal segments are precisely
	the \emph{single-flip} passages realizing the six oriented edges of the hexagon in $\EG^\mathbb{T}(\mathbf{S}_\Delta)$.
	Thus, under the identification of cells and walls with triangulations of $\mathbf{S}_\Delta$, $\Phi|_{\partial D}$ represents the image $\mathcal \rho_*(H)$.

	\smallskip
	\noindent\textbf{Conclusion.}
	Since $D$ is a disc and $\Phi:D\ \to \FQuad^\mathbb{T}(\mathbf{S}_\Delta)$ is continuous, the loop
	$\Phi|_{\partial D}$ is null-homotopic in $\FQuad^\mathbb{T}(\mathbf{S}_\Delta)$. Hence the image
	of the hexagon is trivial in $\pi_1\FQuad^\mathbb{T}(\mathbf{S}_\Delta)$. The cases of squares and pentagons are simpler: In both cases one such product strip suffices (with rotation windows contained in $[0,2\delta]$). See Figure~\ref{pentagoninquad} for the pentagon case.

	\input{3_Paper_S/Pictures/fig_pentagon}

\end{proof}

\subsection{Fundamental Group of $\Quad(\mathbf{S})$}\label{subsec:4.2}

Our goal in this subsection is to compute the (orbifold) fundamental group of the moduli space $\Quad(\mathbf{S})$ by comparing it with the corresponding exchange graph. More precisely, we show in Proposition~\ref{theorem:quadeg} that the
(orbifold) fundamental group $\pi_1\Quad(\mathbf{S})$ is isomorphic to the
(orbifold) fundamental group of
\[ \EG^\star(\mathbf{S}) = \EG(\mathbf{S}) / \MCG(\mathbf{S}), \]
after imposing the square, pentagon, and hexagon relations of Section~\ref{sec:eg}.
This statement is implicit in \cite{KQ2}; here we give a direct proof using
only triangulations and quadratic differentials, without appealing to cluster
algebras.

We will first establish that
\[ \pi_1\EG^\TT(\mathbf{S}_\Delta) / \Rel(\mathbf{S}_\Delta) = 1
\qquad\text{(Theorem~\ref{thm:EGdecsimpl-conn})}. \]
From this it follows immediately that the moduli space
$\FQuad(\mathbf{S}_\Delta)$ is simply-connected
(Theorem~\ref{Theorem:pi1trivial}).  
This result will be the main tool in proving Proposition~\ref{theorem:quadeg}.

\begin{theorem}\label{thm:EGdecsimpl-conn} 
	The group $\pi_1 \EG^\TT(\mathbf{S}_\Delta)/\Rel(\mathbf{S}_\Delta)$ is trivial. 
\end{theorem}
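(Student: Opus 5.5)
We deduce the statement from the covering $F:\EG^\TT(\mathbf{S}_\Delta)\to\EG(\mathbf{S})$ together with Lemma~\ref{lem:BT_is_quotient}, Proposition~\ref{prop:MS_pi1EGmodRel} and Theorem~\ref{thm:Braid}.

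By Lemma~\ref{lem:BT_is_quotient}, $F_*$ embeds $\pi_1\EG^\TT(\mathbf{S}_\Delta)/\Rel(\mathbf{S}_\Delta)$ into $\pi_1\EG(\mathbf{S})/\Rel(\mathbf{S})$. We write $\Phi:\pi_1\EG(\mathbf{S})/\Rel(\mathbf{S})\to\BT(\mathbf{S}_\Delta)$ for the resulting surjection, whose kernel is exactly the image of $F_*$. It therefore suffices to show that $\Phi$ is injective. Then $\ker\Phi=\operatorname{Im}F_*$ is trivial, and since $F_*$ is injective, the source group is trivial as well.

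To prove injectivity we build an inverse. Fix the base triangulation $T=F(\TT)$. By Proposition~\ref{prop:MS_pi1EGmodRel} the group $\pi_1(\EG(\mathbf{S}),T)/\Rel(\mathbf{S})$ equals $\TB(T)$, generated by the local twists $t_\gamma$ for $\gamma\in T$. Lifting a local twist $t_\gamma$ to $\EG^\TT(\mathbf{S}_\Delta)$ moves the two decorations in the quadrilateral around $\gamma$ by a half-twist along the dual closed arc $\eta_\gamma$. Hence $\Phi(t_\gamma)=B_{\eta_\gamma}^{\mp1}$, matching the convention $B_a\mapsto(t_a^\circ)^{-1}$ used in Proposition~\ref{prop:triangulationbraid}. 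We define
\[
\Psi:\BT(\TT)\cong\Br(\TT)\longrightarrow\TB(T),\qquad B_{\eta_\gamma}\longmapsto t_\gamma^{-1},
\]
using the finite presentation of Theorem~\ref{thm:Braid}. Then $\Psi$ is well defined as soon as every defining relation $1^\circ$--$7^\circ$ of $\Br(\TT)$ holds in $\TB(T)$, which is the content of Proposition~\ref{prop:triangulationbraid}. By construction $\Psi\circ\Phi$ is the identity on the generators $t_\gamma$, so $\Phi$ is injective.

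The main obstacle is relation $7^\circ$, which Proposition~\ref{prop:triangulationbraid} leaves unproven. The first thing I would try is to imitate the proofs of $5^\circ$ and $6^\circ$. One finds a sequence of forward flips transforming the fifth configuration of Figure~\ref{braid-triang} into a triangulation in which the conjugate $f^{abc}$ becomes a single local twist $f$. One then applies Proposition~\ref{prop:conjugation_formula} along that flip path. The hope is that $e$ and $f$ no longer share a triangle there, so that $\Co(e,f)$ follows from Lemma~\ref{lem:TB_braidrelation}(1).

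If no such flip sequence is available, the fallback is to invoke the cluster-theoretic result of \cite{KQ2}, where the cluster braid group is known to satisfy all seven relations and $\TB(T)$ is a special case. A secondary point is checking that $\Phi$ sends twists to exactly the dual braid twists with consistent signs. This reduces to the local picture in a single quadrilateral.
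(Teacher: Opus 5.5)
Your proposal follows the paper's own proof: both use the short exact sequence of Lemma~\ref{lem:BT_is_quotient} and the fact that $\pi_1\EG(\mathbf{S})/\Rel(\mathbf{S})=\TB(T)$ is generated by local twists (Proposition~\ref{prop:MS_pi1EGmodRel}), then build an inverse to $t_\gamma\mapsto B_{\eta_\gamma}^{-1}$ from the presentation of Theorem~\ref{thm:Braid} and the relations of Proposition~\ref{prop:triangulationbraid}, which forces the kernel $\pi_1\EG^\TT(\mathbf{S}_\Delta)/\Rel(\mathbf{S}_\Delta)$ to be trivial. The paper also leaves relation $7^\circ$ unverified in $\TB(T)$ and falls back, exactly as you propose, on the cluster-theoretic result of \cite{KQ2}, so your proposal is complete to the same extent as the paper's proof.
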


\begin{proof}
	From Lemma~\ref{lem:BT_is_quotient} we obtain the short exact sequence
	\[1 \rightarrow \pi_1(\EG^\TT(\mathbf{S}_\Delta), \TT)/\Rel(\mathbf{S}_\Delta)
	\rightarrow
	\pi_1(\EG(\mathbf{S}), T)/\Rel(\mathbf{S}) \xrightarrow{i_\TT} \BT(\TT) \rightarrow 1.\]
	Denote by $\{\gamma_i\}$ the open arcs in $\TT$ and by $\{\eta_i\}$ their dual closed arcs. The group $\pi_1(\EG(\mathbf{S}), T)/\Rel(\mathbf{S})$ is generated by the local twists by Proposition~\ref{prop:MS_pi1EGmodRel}, and the map $i_\TT$ is determined by sending the local twist with respect to an open arc $F(\gamma_i)$ in $T$ to the braid twist $B_{\eta_i}^{-1}$. 
	
	A finite presentation of $\BT(\TT)$ is given in Theorem~\ref{thm:Braid} with respect to the standard generators $\{B_\eta \mid \eta \in \TT^* \}$.
	The (generating) relations are precisely those given in Definition~\ref{def:braidgroup} and we know, by Proposition~\ref{prop:triangulationbraid}, that these relations are satisfied by the local twists in $\TB(T)$. Hence $i_\TT$ has a well-defined inverse, also taking generators to
	generators. Hence,  $i_\TT$ is an isomorphism and $\pi_1(\EG^\TT(\mathbf{S}_\Delta),\TT)/\Rel(\mathbf{S}_\Delta)~=~1$, as required.
\end{proof}

\begin{theorem}\label{Theorem:pi1trivial}
	The fundamental group $\pi_1\FQuad^\TT(\mathbf{S}_\Delta)$ is trivial.
\end{theorem}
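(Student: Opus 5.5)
The plan is to combine the three facts already established about the canonical embedding $\wp_{\mathbf{S}_\Delta}:\EG^{\TT}(\mathbf{S}_\Delta)\to\FQuad^{\TT}(\mathbf{S}_\Delta)$. By Lemma~\ref{lem:embeddingFQuadS}, restricted to the component containing $\TT$, the induced map
\[
\wp_*:\pi_1\EG^{\TT}(\mathbf{S}_\Delta)\longrightarrow \pi_1\FQuad^{\TT}(\mathbf{S}_\Delta)
\]
is surjective. This rests on Lemma~\ref{lem:homotopchamberSDelta}: any loop can be pushed into $B_2(\mathbf{S}_\Delta)$, and a loop there is recorded by its sequence of chambers and wall crossings.

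By Proposition~\ref{prop:RelTrivial}, every square, pentagon and hexagon loop is null-homotopic in $\FQuad^{\TT}(\mathbf{S}_\Delta)$. Hence $\Rel(\mathbf{S}_\Delta)\subseteq\ker\wp_*$, and $\wp_*$ factors through a surjection
\[
\pi_1\EG^{\TT}(\mathbf{S}_\Delta)/\Rel(\mathbf{S}_\Delta)\twoheadrightarrow \pi_1\FQuad^{\TT}(\mathbf{S}_\Delta).
\]
By Theorem~\ref{thm:EGdecsimpl-conn} the source of this surjection is trivial, so the target is trivial as well. That is the whole argument.

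The step that deserves care is the basepoint and component bookkeeping. We take the basepoint of $\FQuad^{\TT}(\mathbf{S}_\Delta)$ to be a saddle-free differential in the chamber $U(\TT)$, and the basepoint of the graph to be $\TT$. We check that $\wp$ sends the component $\EG^{\TT}(\mathbf{S}_\Delta)$ into $\FQuad^{\TT}(\mathbf{S}_\Delta)$. We also check that the surjectivity statement of Lemma~\ref{lem:embeddingFQuadS} is really the statement for this component, which is how Lemma~\ref{lem:homotopchamberSDelta} is phrased for $\mathbf{S}_\Delta$.

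We also make sure the three theorems use the same normal subgroup $\Rel(\mathbf{S}_\Delta)$ of Definition~\ref{def:DMS_Rel}. Since $\ker\wp_*$ is normal, it suffices that the generating loops, together with their conjugates along paths from $\TT$, are trivial. This is exactly what the pointwise null-homotopies of Proposition~\ref{prop:RelTrivial} provide once each loop is transported to the basepoint along a path.

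The main (inherited) obstacle is thus not in this deduction but in its inputs, namely Theorem~\ref{thm:EGdecsimpl-conn}. That theorem relied on Proposition~\ref{prop:triangulationbraid}, whose relation $7^\circ$ was left unverified in the triangulation language. If one wants the present statement fully self-contained, I would either accept $7^\circ$ from the cluster setting of \cite{KQ2}, or try to verify it directly. The direct route would conjugate by a sequence of flips that moves $f$ into a triangle adjacent to $e$, mimicking the computations for $5^\circ$ and $6^\circ$ via Proposition~\ref{prop:conjugation_formula}, and then conclude with the commutation relation of Lemma~\ref{lem:TB_braidrelation}.
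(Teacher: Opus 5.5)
Your argument is exactly the paper's proof: the surjection $\wp_*$ from Lemma~\ref{lem:embeddingFQuadS} kills $\Rel(\mathbf{S}_\Delta)$ by Proposition~\ref{prop:RelTrivial}, so it factors through the group of Theorem~\ref{thm:EGdecsimpl-conn}, which is trivial. Your caveat is also accurate: the deduction itself is complete, and the only soft spot is the dependence of Theorem~\ref{thm:EGdecsimpl-conn} on relation $7^\circ$ in Proposition~\ref{prop:triangulationbraid}, which the paper does not verify in the triangulation language and leaves to the cluster setting of \cite{KQ2}.
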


\begin{proof}
	By Proposition~\ref{prop:RelTrivial} the square, hexagon and pentagon relations are in the kernel of the surjection $\wp_*: \pi_1 \EG^\mathbb{T}(\mathbf{S}_\Delta) \rightarrow \pi_1 \FQuad^\mathbb{T}(\mathbf{S}_\Delta)$
	from Lemma~\ref{lem:embeddingFQuadS}. By the universal property of the kernel the surjection $\wp_*$ factors through a map
	\[\pi_1\EG^\TT(\mathbf{S}_\Delta)/\Rel(\mathbf{S}_\Delta) \rightarrow \pi_1\FQuad^\TT(\mathbf{S}_\Delta).\]
	By Theorem~\ref{thm:EGdecsimpl-conn} the fundamental group $\pi_1\EG^\TT(\mathbf{S}_\Delta)/\Rel(\mathbf{S}_\Delta)$ is trivial. Hence, the surjective map $\pi_1\EG^\TT(\mathbf{S}_\Delta)/\Rel(\mathbf{S}_\Delta) \rightarrow \pi_1\FQuad^\TT(\mathbf{S}_\Delta)$ is the trivial map.
	
\end{proof}

\begin{prop}\label{theorem:quadeg}
	There is an isomorphism \[\pi_1 \Quad(\mathbf{S}) \simeq \pi_1\EG^\star(\mathbf{S})/\pi_1\EG^\TT(\mathbf{S}_\Delta)\]
	of orbifold fundamental groups, where $\pi_1\EG^\TT(\mathbf{S}_\Delta) \simeq \Rel^\star(\mathbf{S}) \subset \pi_1\EG^\star(\mathbf{S})$ is the normal subgroup generated by the square, pentagon, and hexagon relations.
\end{prop}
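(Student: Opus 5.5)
We will identify both sides with the orbifold quotient by the same group action, using that the decorated framed moduli space is simply connected. By Theorem~\ref{Theorem:pi1trivial}, $\FQuad^\TT(\mathbf{S}_\Delta)$ is simply connected. By Lemma~\ref{lemma:Deck}, the projection $\FQuad^\TT(\mathbf{S}_\Delta)\to\Quad(\mathbf{S})$ is a (orbifold) covering with deck group $\MCG(\mathbf{S}_\Delta)$, restricted to the stabiliser of the component. It is therefore the orbifold universal cover. Hence $\pi_1\Quad(\mathbf{S})$ is canonically the subgroup $G\subseteq\MCG(\mathbf{S}_\Delta)$ preserving the component $\FQuad^\TT(\mathbf{S}_\Delta)$.

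Next we carry out the analogous computation on the combinatorial side. The forgetful map $F:\EG^\TT(\mathbf{S}_\Delta)\to\EG(\mathbf{S})$ is a covering, and $\EG(\mathbf{S})\to\EG^\star(\mathbf{S})=\EG(\mathbf{S})/\MCG(\mathbf{S})$ is an orbifold (graph-of-groups) quotient. Composing, $\EG^\TT(\mathbf{S}_\Delta)\to\EG^\star(\mathbf{S})$ is an orbifold covering of graphs with deck group $G$, which is the same group as above, since both are the stabiliser of the connected component labelled by $\TT$. This yields a short exact sequence
\[
1\to\pi_1\EG^\TT(\mathbf{S}_\Delta)\to\pi_1\EG^\star(\mathbf{S})\to G\to1 .
\]
By Theorem~\ref{thm:EGdecsimpl-conn}, $\pi_1\EG^\TT(\mathbf{S}_\Delta)$ is normally generated by squares, pentagons and hexagons. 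Their images in $\pi_1\EG^\star(\mathbf{S})$ are exactly the generators of $\Rel^\star(\mathbf{S})$, using the remark that relations upstairs are exactly lifts of relations downstairs. So the image of $\pi_1\EG^\TT(\mathbf{S}_\Delta)$ equals $\Rel^\star(\mathbf{S})$, and $\pi_1\EG^\star(\mathbf{S})/\Rel^\star(\mathbf{S})\cong G$.

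Finally, we check compatibility. The embedding $\wp_{\mathbf{S}_\Delta}$ of Lemma~\ref{lem:embeddingFQuadS} is $\MCG$-equivariant and descends to $\wp^\star$. The two exact sequences therefore map to each other, with the identity on $G$ and the surjection $\wp_*$ in the middle. The resulting isomorphism is induced by $\wp_*$, which confirms that it is natural.

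The main obstacle is the orbifold bookkeeping. Vertex stabilisers in $\EG^\star(\mathbf{S})$ and orbifold points of $\Quad(\mathbf{S})$ must match, so that "deck group of the universal cover" makes sense on both sides. The plan is to use the Bass–Serre description (Proposition~\ref{prop:orbpi1-graph}) for the graph and the properly discontinuous action for the moduli space. One also has to verify that $\MCG(\mathbf{S}_\Delta)$, not just $\MCG(\mathbf{S})$, is the correct symmetry group. This follows from Lemma~\ref{lem:kernel_F}: the kernel of $F_M$ is the surface braid group, which accounts for the extra deck transformations of $F$.
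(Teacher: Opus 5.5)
Your proposal is correct and follows essentially the paper's argument. Both compare the exact sequences $1\to\pi_1\EG^\TT(\mathbf{S}_\Delta)\to\pi_1\EG^\star(\mathbf{S})\to G\to 1$ and $1\to\pi_1\FQuad^\TT(\mathbf{S}_\Delta)\to\pi_1\Quad(\mathbf{S})\to G\to 1$ via $\wp_*$, use Theorem~\ref{Theorem:pi1trivial} to make the left term of the second sequence trivial, and identify the left term of the first with $\Rel^\star(\mathbf{S})$ using Theorem~\ref{thm:EGdecsimpl-conn}. The differences are cosmetic: you identify both middle groups directly with the deck group, and more carefully take $G$ to be the stabiliser of the component rather than all of $\MCG(\mathbf{S}_\Delta)$, where the paper invokes the snake lemma.
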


\begin{proof}
	We recall that $\MCG(\mathbf{S}_\Delta)$ acts on $\FQuad^\mathbb{T}(\mathbf{S}_\Delta)$ with quotient $\Quad(\mathbf{S})$. By Lemma~\ref{lemma:Deck}, this action defines a covering  
	\[
	F: \FQuad^\mathbb{T}(\mathbf{S}_\Delta) \to \Quad(\mathbf{S})
	\]  
	with Deck group $\MCG(\mathbf{S}_\Delta)$. As a consequence of this and Lemma~\ref{lem:embeddingFQuadS}, we obtain the following commutative diagram:  
	
	\[
	\begin{tikzcd}
		B_2(\mathbf{S}_\Delta) \rar[hook] \dar[two heads]{\MCG(\mathbf{S}_\Delta)} & \FQuad^\mathbb{T}(\mathbf{S}_\Delta) \dar[two heads]{\MCG(\mathbf{S}_\Delta)} \\
		F(B_2(\mathbf{S}_\Delta)) \rar[hook] & \Quad(\mathbf{S})
	\end{tikzcd}
	\]
	
	Moreover, Lemma~\ref{lem:embeddingFQuadS} implies the isomorphisms  
	\[
	\pi_1(B_2(\mathbf{S}_\Delta)) \simeq \pi_1\EG^\mathbb{T}(\mathbf{S}_\Delta), \quad \text{and} \quad \pi_1(F(B_2(\mathbf{S}_\Delta))) \simeq \pi_1 \EG^\star(\mathbf{S}).
	\] 
	Combining this with the previous diagram ~\ref{lem:embeddingFQuadS} yields the following commutative diagram.
	
	\[
	\begin{tikzcd}
		& \ker(f) \dar & \ker(g) \dar & 1 \dar & \\
		1 \rar & \pi_1\EG^\mathbb{T}(\mathbf{S}_\Delta) \rar{i_1} \dar[two heads]{f} & \pi_1 \EG^\star(\mathbf{S}) \rar{p_1} \dar[two heads]{g} & \MCG(\mathbf{S}_\Delta) 
		\rar \dar{\id} & 1 \\
		1 \rar & \pi_1\FQuad^\mathbb{T}(\mathbf{S}_\Delta) \rar{i_2} & \pi_1\Quad(\mathbf{S}) \rar{p_2} & \MCG(\mathbf{S}_\Delta) \rar & 1
	\end{tikzcd}
	\]
	
	By Theorem~\ref{Theorem:pi1trivial}, the group $\pi_1\FQuad^\mathbb{T}(\mathbf{S}_\Delta)$ is trivial. Consequently, $\ker(f) = \pi_1\EG^\mathbb{T}(\mathbf{S}_\Delta)$ .
	Since the right vertical identity on the mapping class group is obviously injective, by the snake lemma we have a short exact sequence of the kernels
	\[0 \rightarrow \ker(f) \rightarrow \ker(g) \rightarrow 1 \rightarrow 1.\]
	Hence, $\ker(g) = \ker(f) = \pi_1\EG^\mathbb{T}(\mathbf{S}_\Delta)$.
	
	
	By Proposition~\ref{prop:RelTrivial} the subgroup, $\Rel^\star(\mathbf{S})$, generated by squares, hexagons and pentagons lies in $\ker(g)$ and by  Theorem~\ref{thm:EGdecsimpl-conn} this subgroup lies in $\pi_1\EG^\mathbb{T}(\mathbf{S}_\Delta)$.
	Hence, $\ker(g)~=~\pi_1\EG^\mathbb{T}(\mathbf{S}_\Delta)$ is the subgroup generated by squares, hexagons and pentagons.
\end{proof}

Let $$\FQuad^\TT(\mathbf{S}_\Delta) \longrightarrow Q \quad\text{and}\quad \EG^\TT(\mathbf{S}_\Delta) \longrightarrow E$$ be any other choice of base maps, where $E$ denotes the (quotient) graph that is homotopy equivalent to the wall–and–chamber structure in $Q$, defined as before. As in the previous proof, the group $\pi_1\EG^\TT(\mathbf{S}_\Delta)$ naturally embeds as a subgroup of~$\pi_1(E)$. This subgroup is generated by all square, pentagon, and hexagon relations in~$E$. 
We will denote this subgroup in the following corollary by~$\Rel(E)$.

\begin{cor}\label{remKQ}
	There is an isomorphism $$\pi_1(Q) \;\cong\; \pi_1(E)\big/\Rel(E).$$
\end{cor}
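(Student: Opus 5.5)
The plan is to repeat the argument of Proposition~\ref{theorem:quadeg} with $\Quad(\mathbf{S})$ replaced by $Q$ and $\EG^\star(\mathbf{S})$ replaced by $E$. The simply connected cover $\FQuad^\TT(\mathbf{S}_\Delta)$ and its skeleton $\EG^\TT(\mathbf{S}_\Delta)$ stay fixed. First I record the assumptions implicit in ``any other choice of base maps'': the map $\FQuad^\TT(\mathbf{S}_\Delta)\to Q$ is a (possibly orbifold) Galois covering with some deck group $G$. Since $\MCG(\mathbf{S}_\Delta)$ preserves the wall--chamber structure, I will take $G$ to be a subgroup of $\MCG(\mathbf{S}_\Delta)$. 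The graph $E$ is then the quotient $\EG^\TT(\mathbf{S}_\Delta)/G$, embedded in $Q$ dual to the image of $B_2(\mathbf{S}_\Delta)$, exactly as in Lemma~\ref{lem:embeddingFQuadS}. Under these assumptions both horizontal rows of the diagram in the proof of Proposition~\ref{theorem:quadeg} exist with $G$ in place of $\MCG(\mathbf{S}_\Delta)$:
\[
1\to\pi_1\EG^\TT(\mathbf{S}_\Delta)\to\pi_1(E)\to G\to 1,\qquad 1\to\pi_1\FQuad^\TT(\mathbf{S}_\Delta)\to\pi_1(Q)\to G\to1 .
\]
Here the fundamental groups are orbifold fundamental groups when $G$ has stabilisers, read off from the graph of groups of Subsection~\ref{subsection:bass-serre}.

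Second, the vertical maps. The embedding $E\to Q$ induces $g\colon\pi_1(E)\to\pi_1(Q)$. It is surjective because every path in $Q$ lifts to $\FQuad^\TT(\mathbf{S}_\Delta)$, and Lemma~\ref{lem:homotopchamberSDelta} lets me push the lift into $B_2(\mathbf{S}_\Delta)$ before projecting back down. The map $g$ restricts to $f\colon\pi_1\EG^\TT(\mathbf{S}_\Delta)\to\pi_1\FQuad^\TT(\mathbf{S}_\Delta)$ and induces the identity on $G$, since both rows are defined by the same deck action. By Theorem~\ref{Theorem:pi1trivial} the target of $f$ is trivial, so $\ker f=\pi_1\EG^\TT(\mathbf{S}_\Delta)$. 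The snake lemma argument used before (the right vertical map is the identity) then gives $\ker g=\ker f=\pi_1\EG^\TT(\mathbf{S}_\Delta)$.

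Third, I identify this kernel with $\Rel(E)$. In one direction, Theorem~\ref{thm:EGdecsimpl-conn} says $\pi_1\EG^\TT(\mathbf{S}_\Delta)$ is normally generated by squares, pentagons and hexagons, and their images in $E$ are squares, pentagons and hexagons of $E$. So $\ker g\subseteq\Rel(E)$, where normal closure is taken inside $\pi_1(E)$; this is harmless because $\ker g$ is already normal. In the other direction, every square, pentagon or hexagon of $E$ is the image of one in $\EG^\TT(\mathbf{S}_\Delta)$, because relations in $E$ are defined as such images. By Proposition~\ref{prop:RelTrivial} these images are null-homotopic in $Q$, so $\Rel(E)\subseteq\ker g$. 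Surjectivity of $g$ then yields $\pi_1(Q)\cong\pi_1(E)/\Rel(E)$.

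The main obstacle is the orbifold case, where $G$ has fixed points on chambers or walls. Then ``covering'' and ``$\pi_1$'' must be read in the orbifold sense, and I must check that the top row is still exact when $E$ is a graph of groups. I would handle this with Bass--Serre theory: $\pi_1$ of the graph of groups $\EG^\TT(\mathbf{S}_\Delta)/\!/G$ is the group of lifts of paths in $\EG^\TT(\mathbf{S}_\Delta)$ ending at a $G$-translate of the base vertex. This gives the exact sequence directly, because $\EG^\TT(\mathbf{S}_\Delta)$ is connected. The same description applies to $\pi_1^{\mathrm{orb}}(Q)$ via the simply connected cover $\FQuad^\TT(\mathbf{S}_\Delta)$, so the two rows stay compatible.
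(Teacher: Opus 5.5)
Your proposal is correct and follows the paper's own argument. The paper proves this corollary by rerunning the proof of Proposition~\ref{theorem:quadeg} with $Q$ in place of $\Quad(\mathbf{S})$: two exact rows over a common deck group, the vanishing $\pi_1\FQuad^\TT(\mathbf{S}_\Delta)=1$ giving $\ker g=\pi_1\EG^\TT(\mathbf{S}_\Delta)$, and Theorem~\ref{thm:EGdecsimpl-conn} together with Proposition~\ref{prop:RelTrivial} identifying this kernel with $\Rel(E)$. You additionally make explicit the hypotheses the paper leaves implicit: a deck group $G\subseteq\MCG(\mathbf{S}_\Delta)$ preserving the component, $E=\EG^\TT(\mathbf{S}_\Delta)/G$, and the orbifold reading of both rows.
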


\begin{proof}
	Replace $\Quad(\mathbf{S})$ by $Q$ in the proof of Proposition~\ref{theorem:quadeg}.
\end{proof}

In the following sections, we will either work with $\Quad(\mathbf{S})$, the space of quadratic differentials obtained by forgetting the framing and decorations, or with $\Quad^{\mathrm{lab}}(\mathbf{S})$, where the zeroes are labelled.

\subsection{Higher-Order Zeroes}\label{subsec:4.3}
Our goal in this subsection is to show that all loops in
Definition~\ref{def:wDMS_Rel} vanish under the map
\[
\rho_{\mathbf{S}_{\mathbf{w}}} : 
\pi_1 \EG^\circ(\mathbf{S}_{\mathbf{w}}) \longrightarrow 
\pi_1 \FQuad^\circ(\mathbf{S}_{\mathbf{w}})
\]
from Lemma~\ref{lem:embeddingFQuadS_w}.  
As the trajectory structure locally behaves similar to the simple-zero case for the square, pentagon, and the first hexagon configuration, the corresponding parts of the proof can be copied verbatim.  
Thus it remains only to verify the vanishing of the second hexagon relation, which occurs solely in the presence of a higher–order zero.

We do not claim this time that  these relations generate the full kernel of
$\rho_{\mathbf{S}_{\mathbf{w}}}$, only their vanishing is established here.

\begin{prop}\label{prop:relationsinkernel_gen}
	Let $\mathbf{S}_{\mathbf{w}}$ be a weighted decorated marked surface and
	$\EG^\circ(\mathbf{S}_{\mathbf{w}})$ the oriented exchange graph of
	$\mathbf{w}$-mixed-angulations.
	Under the embedding
	\[
	\wp_{\mathbf{S}_{\mathbf{w}}}:
	\EG^\circ(\mathbf{S}_{\mathbf{w}})
	\hookrightarrow
	\FQuad^\circ(\mathbf{S}_{\mathbf{w}}),
	\]
	the loops corresponding to all generalized square, pentagon, and hexagon
	(both types) relations in $\EG^\circ(\mathbf{S}_{\mathbf{w}})$ (see Definition~\ref{def:wDMS_Rel})
	are null-homotopic.
	Equivalently, the subgroup
	$\Rel(\sow)$
	lies in the kernel of the induced map
	\[
	\pi_1\EG^{\circ}(\sow)
	\longrightarrow
	\pi_1\FQuad^{\circ}(\sow).
	\]
\end{prop}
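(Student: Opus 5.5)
The plan follows the strategy of Proposition~\ref{prop:RelTrivial}: exhibit each relation loop as the boundary of an explicit disc in $\FQuad^\circ(\sow)$ swept out by rotation families $\theta\mapsto e^{\pi i\theta}\cdot\ell(t)$ over chamber-preserving period deformations $\ell$. Every relation is determined by a pair of arcs in a $\mathbf{w}$-mixed-angulation $\MM_0$. In each case I would pick a saddle-free $q_0$ with $\MM_{q_0}=\MM_0$. Let $\alpha_1,\alpha_2$ be the saddle connections dual to the two chosen arcs. I would normalise their periods to be short and close to the negative real axis, say $Z_{q_0}(\alpha_1)=e^{(1-\delta)\pi i}$ and $Z_{q_0}(\alpha_2)=m\,e^{(1-3\delta)\pi i}$. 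All other standard saddles get period $M e^{\pi i/2}$ with $M\gg1$. Then a small rotation meets only walls where $\alpha_1$ or $\alpha_2$ becomes horizontal. By Proposition~\ref{prop:EGB2}, each such wall crossing is exactly one forward flip of the corresponding arc.

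For the square, pentagon and first hexagon, the relevant neighbourhood of $\alpha_1\cup\alpha_2$ in the horizontal strip decomposition has the same combinatorics as in the simple-zero case. This is because only the polygons adjacent to the two arcs are involved, and in these configurations each arc meets each adjacent polygon in one side. So the strips and gluings from the proof of Proposition~\ref{prop:RelTrivial} carry over with only the polygon sizes changed: one strip suffices for squares and pentagons, and three strips for the first hexagon. One point needs checking: a forward flip in a $(k+2)$-gon moves an endpoint by one vertex, not by a triangle step. I would verify this with the metric picture of Figure~\ref{cap:flip}, which already handles the mixed case.

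The new work is the second hexagon. Here both arcs are sides of the same $(k+2)$-gon $P$ with $k\ge2$, whose zero $Z$ has order $k$, so $\alpha_1$ and $\alpha_2$ both emanate from $Z$. Cyclically, the angular sectors between consecutive prongs at $Z$ alternate between $\alpha_1$-adjacent and $\alpha_2$-adjacent configurations. The expected loop is $x^3=y^3$, and I would realise it by a single rotation window of total length about $1$ (a half-turn, i.e.\ period multiplied by $-1$), split into three strips. Concretely, I would rotate the whole differential by $\theta\in[0,1]$ while deforming $\ell$ so that $\alpha_1$ and $\alpha_2$ interchange their phase order. Along one boundary arc the walls are crossed in the order $\alpha_1,\alpha_2,\alpha_1$, giving $x^3$; along the other they are crossed as $\alpha_2,\alpha_1,\alpha_2$, giving $y^3$. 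Rotating by a half-turn returns the horizontal foliation to itself, so the ending mixed-angulations agree. The disc $D$ is then assembled exactly as in the gluing step of Proposition~\ref{prop:RelTrivial}, and its boundary is the second hexagon.

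The main obstacle is that $\alpha_1$ and $\alpha_2$ share the endpoint $Z$, with an angle constrained to a multiple of $2\pi/(k+2)$ between their departing prongs. So they cannot be rotated independently as in the simple case, and I must show that the period deformations $\ell_j$ exist and are chamber preserving. I would first try the local period coordinates of \cite{BMQS}, in which $Z(\alpha_1)$ and $Z(\alpha_2)$ are independent coordinates; this makes the relative phase freely adjustable in a small window. I would then track, through the three half-turn segments, that no third saddle becomes horizontal. Isolating $P$ by taking $M$ large should handle this, but one must also check that the intermediate rotated copies do not produce infinitely many wall crossings. As in Strip~1 of the hexagon case, I would avoid those by homotoping through a codimension-two point inside the strip.
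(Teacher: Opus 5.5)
Your treatment of the square, pentagon and first hexagon agrees with the paper: locally the wall-crossing pattern is that of the simple-zero case, so the strips of Proposition~\ref{prop:RelTrivial} carry over. The gap is in the second hexagon, which is the only new case. A rotation window of length $1$ (periods multiplied by $-1$, i.e.\ the full rotation $q\mapsto e^{2\pi i}q$ of the differential) is incompatible with your own normalisation. Along a boundary row $\theta\mapsto e^{\pi i\theta}\ell(t_0)$, every saddle connection has its phase moved monotonically by exactly $\pi$, so every saddle connection becomes horizontal exactly once. In particular, all the other standard saddles, with $Z=Me^{\pi i/2}$, become horizontal around $\theta=1/2$. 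Taking $M\gg1$ isolates $\alpha_1,\alpha_2$ only inside a window of size $O(\delta)$ around phase $\pi$; it does nothing for a half-turn. Each row of your disc therefore crosses walls for all arcs of the mixed-angulation. The rows are essentially full-rotation paths (lifts of the central $2\pi$-rotation $z$), so $\partial D$ is some long loop and not $(y^3)^{-1}x^3$; you would be proving that a different loop is null-homotopic. For the same reason, the pattern $\alpha_1,\alpha_2,\alpha_1$ cannot come from the saddle $\alpha_1$ becoming horizontal twice. A third flip on a row must come from a different saddle connection, and you never identify it. Finally, the half-turn is not what closes the disc up: in a product $I\times\ell$ the ends of the two rows are joined by the chamber-preserving side $e^{\pi i\theta_{\max}}\ell$, whatever the window length.

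What is needed, and what the paper does, is to keep the window small and use a single rectangle $[0,4\delta]\times\ell$. Here $\ell$ is a chamber-preserving deformation exchanging the phase order of $\alpha_1$ and $\alpha_2$ (phases $1-\delta,\,1-3\delta$ become $1-3\delta,\,1-\delta$). The feature specific to a zero of order $\ge2$ is that \emph{both} rows $[0,4\delta]\times\{q_0\}$ and $[0,4\delta]\times\{q_1\}$ already cross three walls. Besides $\alpha_1$ and $\alpha_2$ there must be an additional saddle connection whose phase lies in the small window, and it must be present for both phase orders. For the pentagon such a saddle exists for only one order, which is why one gets $2$ against $3$ there. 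The paper supports this with the slit model of Figure~\ref{2ndhexagoninquad}: the two left (resp.\ right) differentials lie in the same chamber because they differ by cutting out the lower strip, rotating it by $\pi$ and regluing. This makes the six horizontal segments of $\partial([0,4\delta]\times\ell)$ exactly the six single-flip passages of the second hexagon. Your proposal has to supply this local analysis of the three crossings per row; the half-turn construction cannot replace it.
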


\begin{proof}
	Locally near the relevant pair of saddle connections, the horizontal foliation
	is the same as in the simple-zero case, up to the presence of additional half-planes
	which do not affect the local wall-crossing pattern.
	Thus the arguments used for squares, pentagons, and the first hexagon relation
	in Proposition~\ref{prop:RelTrivial} carry over unchanged.
	We are left to show the second hexagon relation.
	Fix a hexagon $H$ sub graph in $\EG^\circ(\sow)$ determined by two arcs intersecting twice in the same polygon.
	
	Choose $q_0\in \FQuad^\circ(\sow)$ whose mixed-angulation is the initial vertex of this hexagon, and let
	$\alpha_1,\alpha_2$ be the corresponding saddle connections.
	Fix small $\delta>0$, a positive $m < 1$, and a large $M\gg1$,
	and arrange
	\begin{align*}
		Z_{q_0}(\alpha_1)=e^{(1-\delta) \pi i},\qquad
		Z_{q_0}(\alpha_2)=m\,e^{(1-3\delta) \pi i},\qquad\\
		Z_{q_0}(\beta)=M\,e^{1/2 \pi i}\quad\text{for all other }\beta.
	\end{align*}
	
	\input{3_Paper_S/Pictures/fig_2ndhexagon}
	
	See Figure~\ref{2ndhexagoninquad}. Here the black bold lines denotes slits separating the two halves with boundaries labelled by $a$ and $b$. The boundaries with same labelled should be thought as being glued together.
	The grey area is a part of the surface that plays no important role for this proof.
	The horizontal path in Figure~\ref{2ndhexagoninquad} translates to rotating the differential. The two left differentials in the picture are in the same chamber, since combinatorially both pictures just differ by cutting out the lower horizontal strip rotating it by $\pi$ and gluing it back in. Same goes for the two differentials on the right hand side.
	
	Let $\ell$ be the chamber preserving line realized by rotating $Z_{q_0}(\alpha_1)$ by $-2\delta$ and $Z_{q_1}(\alpha_1)$ by~$2\delta$ and ending in the differential $q_1$ with
	\begin{align*}
		Z_{q_1}(\alpha_1)=e^{(1-3\delta) \pi i},\qquad
		Z_{q_1}(\alpha_2)=m\,e^{(1-\delta) \pi i},\qquad\\
		Z_{q_1}(\beta)=M\,e^{1/2 \pi i}\quad\text{for all other }\beta.
	\end{align*}
	In each point of this line segment $\ell$ we can rotate by $4 \delta$. This gives us a contractible rectangle in $\FQuad^\circ(\sow)$. The line $e^{4\delta \pi i} \ell$ (right vertical arrow in the picture) is chamber preserving as well.
	
	The boundary of the rectangle consists of six horizontal segments, corresponding to rotations, and two vertical segments, corresponding chamber-preserving deformations. The horizontal segments are exactly the \emph{single-flip} passages and realise the six oriented edges of the hexagon in $\EG^\circ(\sow)$.
	
	Under the identification of cells and walls with mixed-angulations, the boundary of the strip therefore represents the image $\mathcal \wp_{\sow}(H)$. More precisely, along the boundary each differential admits two relevant saddle connections, which correspond to the two edges involved in the hexagon flip. Traversing the upper and lower boundaries of the strip thus realises the flip sequences along the upper and lower halves of the hexagon, respectively; compare the hexagon
	\begin{center}
		\input{3_Paper_S/Pictures/gen_hex2_relation_deco2}
	\end{center}
	with Figure~\ref{2ndhexagoninquad}.

	\end{proof}
	
	\begin{cor} \label{cor:relationsinkernel_gen}
		Similarly the corresponding loops $\Rel^\star(\sow)$ in the unframed stratum are in the kernel of
		\[
		\wp_*:\pi_1\EG^\star(\mathbf{S}_{\mathbf{w}})
		\twoheadrightarrow \pi_1\Quad(\sow).
		\]
	\end{cor}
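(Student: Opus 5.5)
The plan is to deduce the corollary formally from Proposition~\ref{prop:relationsinkernel_gen} by passing through the quotient by $\MCG(\sow)$. By Remark~\ref{rem:EG_wstar}, $\Rel^\star(\sow)$ is the normal subgroup of $\pi_1\EG^\star(\sow)$ generated by the image of $\Rel(\sow)$ under the map $\pi_1\EG^\circ(\sow)\to\pi_1\EG^\star(\sow)$ induced by the quotient $\EG^\circ(\sow)\to\EG^\star(\sow)$. Since the kernel of $\wp_*$ is normal, it suffices to show that each generator of $\Rel(\sow)$, that is, each square, pentagon and hexagon loop of either type, lies in the kernel after being pushed into $\pi_1\EG^\star(\sow)$.

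The first step is to check that the following square commutes up to homotopy:
\[
\begin{tikzcd}
\EG^\circ(\sow) \rar{\wp_{\sow}} \dar & \FQuad^\circ(\sow) \dar \\
\EG^\star(\sow) \rar{\wp^\star_{\sow}} & \Quad(\sow)
\end{tikzcd}
\]
The embedding $\wp_{\sow}$ of Lemma~\ref{lem:embeddingFQuadS_w} can be chosen $\MCG(\sow)$-equivariantly. One way to do this is to pick chamber representatives and wall-crossing paths on a fundamental domain and translate them by the group action. The embedding $\wp^\star_{\sow}$ of Lemma~\ref{lem:embeddingQuadS_w} is then the descended map, and by the uniqueness up to homotopy of both embeddings we may assume the square commutes.

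Next, take a relation loop $L$ in $\EG^\circ(\sow)$. Proposition~\ref{prop:relationsinkernel_gen} gives a null-homotopy of $\wp_{\sow}(L)$ in $\FQuad^\circ(\sow)$, namely the continuous disc $\Phi\colon D\to \FQuad^\circ(\sow)$ built from rotation strips. Composing $\Phi$ with the projection to $\Quad(\sow)$ gives a null-homotopy of the image loop, which by commutativity of the square represents $\wp_*$ applied to the image of $L$ in $\pi_1\EG^\star(\sow)$. Hence every generator of $\Rel^\star(\sow)$ maps to the identity, and so does the normal closure.

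The main obstacle is the orbifold subtlety. The projection $\FQuad^\circ(\sow)\to\Quad(\sow)$ is an orbifold covering rather than an honest covering, so the homomorphism $\pi_1\FQuad^\circ(\sow)\to\pi_1\Quad(\sow)$ must be understood as a map of orbifold fundamental groups. I would handle this by viewing $\Quad(\sow)$ as the quotient orbifold $[\FQuad^\circ(\sow)/\MCG(\sow)]$, possibly after restricting to the stabiliser of the component. In that model an orbifold loop based at the image of a point is a path in $\FQuad^\circ(\sow)$ together with a group element. Loops lifting to closed loops upstairs, which is the case for relation loops since they come from $\EG^\circ(\sow)$, correspond to the image of $\pi_1\FQuad^\circ(\sow)$, so their triviality upstairs implies triviality downstairs. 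The only point still to verify is that the graph-of-groups description of $\EG^\star(\sow)$ is compatible with this model, which I expect to follow from the edges having trivial stabilisers.
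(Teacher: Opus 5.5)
Your proposal is correct and is essentially the paper's argument. The paper gives no separate proof of the corollary. It deduces it from Proposition~\ref{prop:relationsinkernel_gen} through the commutative diagram relating $f\colon\pi_1\EG^\circ(\sow)\to\pi_1\FQuad^\circ(\sow)$ and $g\colon\pi_1\EG^\star(\sow)\to\pi_1\Quad(\sow)$, so relation loops in $\ker f$ land in $\ker g$. Your treatment of the normal closure and of the orbifold quotient model spells out what the paper leaves implicit.
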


\medskip
\noindent\textbf{On the kernel in the higher-order case.}
As in the simple-zero situation, we obtain a commutative diagram of short exact sequences
\[
\begin{tikzcd}
	& \ker(f) \dar & \ker(g) \dar & 1 \dar & \\
	1 \rar & \pi_1\EG^\circ(\sow) \rar{i_1} \dar[two heads]{f} & \pi_1 \EG^\star(\sow) \rar{p_1} \dar[two heads]{g} & \MCG(\sow) 
	\rar \dar{\id} & 1 \\
	1 \rar & \pi_1\FQuad^\circ(\sow) \rar{i_2} & \pi_1\Quad(\sow) \rar{p_2} & \MCG(\sow) \rar & 1
\end{tikzcd}
\]
where $g$ is induced by the embedding of the skeleton. This shows that $\ker(f)=\ker(g)$.
Proposition~\ref{prop:relationsinkernel_gen} shows that all generalized square,
pentagon, and hexagon relations therefore lie in $\ker(g)$.

In the simple-zero case, the additional input that
$\pi_1\FQuad^\TT(\mathbf{S}_\Delta)=1$ allowed us to identify $\ker(g)$
\emph{precisely} with, $\Rel(\mathbf{S}_\Delta)$ the normal subgroup generated by these local relations.
For higher-order zeroes, such a global simple-connectedness result is not available in general, so we only conclude that these relations are contained in $\ker(g)$, not that they generate it.

\subsection{Orbifold fundamental groups of EGs with symmetries}
\label{subsection:bass-serre}

When passing from framed differentials or differentials with labelled zeroes to
unframed differentials with unlabelled zeroes, symmetries of the underlying
triangulation or mixed--angulation may identify several chambers in the stratum,
or equivalently several vertices of the exchange graph.
This phenomenon occurs precisely when two or more zeroes have equal order.
As a result, the exchange graph of unframed differentials with unlabelled zeroes
naturally acquires the structure of an \emph{orbifold graph}:
vertices may carry finite stabilizer groups, while all edges have trivial stabilizers.

\medskip

To formalize this situation we use Bass--Serre theory for graphs of groups.
For background we refer to \cite{dicks2006groups};
however, throughout this paper we only require the elementary special case in which
all edge groups are trivial and all vertex groups are finite.
In this setting, the orbifold fundamental group coincides with the Bass--Serre
fundamental group of the associated graph of groups.

\medskip

Let $X$ denote the exchange graph in the case of labelled zeroes, and let $G$ be the
finite group acting on $X$ by permuting zeroes of equal order.
The quotient
\[
Y := X/G
\]
is the exchange graph of the unlabelled stratum.
Its orbifold structure is encoded by the finite stabilizer groups
\[
G_v := \{\, g \in G \mid g(\tilde v) = \tilde v \,\}
\]
attached to vertices $v\in Y$, where $\tilde v$ is any chosen lift of $v$ to $X$.
Since no nontrivial symmetry fixes a flip, all edge stabilizers are trivial.

\medskip

The appropriate notion of fundamental group for such a space is the
\emph{orbifold fundamental group} $\pi_1^{\mathrm{orb}}(Y)$,
defined as the group of deck transformations of the universal orbifold cover.
As in the classical case, every connected graph admits a universal covering tree;
in the orbifold setting, this tree carries an action of $\pi_1^{\mathrm{orb}}(Y)$
whose quotient is $Y$ and whose vertex stabilizers are precisely the groups~$G_v$.

\medskip

The same data can equivalently be described as a \emph{graph of groups}:
the underlying graph is $Y$, the vertex group at $v$ is $G_v$, and all edge groups are
trivial.
Bass--Serre theory associates to this graph of groups the Bass--Serre fundamental group
$\pi_1^{\mathrm{BS}}(Y)$, which acts on a tree (the Bass--Serre tree) with quotient $Y$
and vertex stabilizers $G_v$.
In the case of trivial edge groups, this group admits a particularly simple
presentation (see Proposition~\ref{prop:orbpi1-graph} below).
Since the Bass--Serre tree is canonically identified with the universal orbifold cover,
the two constructions produce the same group, yielding a canonical identification
\[
\pi_1^{\mathrm{orb}}(Y) \;\cong\; \pi_1^{\mathrm{BS}}(Y).
\]

\begin{prop}[Bass--Serre, cf.\ \S 4.1 \cite{dicks2006groups}]
	\label{prop:orbpi1-graph}
	Let $Y$ be a connected graph equipped with finite vertex stabilizers $G_v$ and
	trivial edge stabilizers, and let $T \subset Y$ be a spanning tree.
	Then the Bass--Serre fundamental group of the associated graph of groups is
	\[
	\pi_1^{\mathrm{BS}}(Y)
	\;\cong\;
	\langle\, t_e \mid e \in E(Y)\setminus E(T) \,\rangle
	\;*\;
	\Big( *_{v\in V(Y)} G_v \Big),
	\]
	that is, the free product of all vertex stabilizer groups together with a free group generated by the edges not contained in~$T$.
	In particular, this group acts on a tree with quotient $Y$ and vertex stabilizers~$G_v$.
	
	In our situation, this tree is canonically identified with the universal orbifold cover, and hence $\pi_1^{\mathrm{BS}}(Y)$ coincides with the orbifold fundamental group $\pi_1^{\mathrm{orb}}(Y)$.
\end{prop}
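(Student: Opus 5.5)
The plan is to compute the Bass--Serre fundamental group directly from its standard presentation, then construct the tree, and finally compare with the universal orbifold cover using the structure theorem of Bass--Serre theory.

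\textbf{The presentation.} Recall from \cite{dicks2006groups} that for a graph of groups $(Y,\{G_v\},\{G_e\})$ with spanning tree $T$, the group $\pi_1^{\mathrm{BS}}(Y,T)$ has the following presentation.
\begin{itemize}
\item \emph{Generators:} the elements of all $G_v$, together with symbols $t_e$ for every oriented edge $e$.
\item \emph{Relations:} those of each $G_v$; $t_{\bar e}=t_e^{-1}$; $t_e=1$ for $e\in E(T)$; and $t_e^{-1}\,\alpha_e(g)\,t_e=\omega_e(g)$ for $g\in G_e$, where $\alpha_e,\omega_e$ are the edge monomorphisms into the groups at the initial and terminal vertex.
\end{itemize}
In our situation all $G_e$ are trivial, so the last family of relations is vacuous. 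The tree-edge relations simply delete the generators $t_e$ with $e\in E(T)$. What survives is the relations internal to each $G_v$ plus a free generator $t_e$ for each geometric edge outside $T$, which is exactly
\[
\langle t_e\mid e\notin E(T)\rangle * \bigl(*_{v} G_v\bigr).
\]
Independence of the choice of $T$ is part of the standard theory and I will quote it.

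\textbf{The tree.} Next I would build the Bass--Serre tree $\widetilde Y$ explicitly.
\begin{itemize}
\item \emph{Vertices:} the cosets $\pi_1^{\mathrm{BS}}(Y)/G_v$ for $v\in V(Y)$.
\item \emph{Edges:} the cosets $\pi_1^{\mathrm{BS}}(Y)/\{1\}$ for each $e\in E(Y)$.
\item \emph{Incidence:} the edge $g e$ joins $gG_{o(e)}$ to $g t_e G_{t(e)}$.
\end{itemize}
That $\widetilde Y$ is connected and has no cycles follows from the normal form theorem for free products: a reduced word alternates between nontrivial elements of vertex groups and letters $t_e^{\pm1}$, and such a word describes a non-backtracking edge path. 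By construction the quotient is $Y$, and the stabilizer of the vertex $gG_v$ is $gG_vg^{-1}$.

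\textbf{Comparison with the orbifold cover.} Finally, I would identify $\widetilde Y$ with the universal orbifold cover of $Y$.
\begin{itemize}
\item \emph{Local model.} By definition, near a vertex $v$ the orbifold $Y$ is modelled on the star of a lift $\tilde v$ in $X$, modulo $G_v$. Since edge stabilizers are trivial, $G_v$ permutes the half-edges at $\tilde v$ freely.
\item \emph{The cover is a tree.} An orbifold cover of $Y$ is therefore an honest graph with a $G$-type action. The universal one is simply connected as a graph, hence a tree $\mathcal T$, carrying an action of $\pi_1^{\mathrm{orb}}(Y)$ without inversions, with quotient $Y$, vertex stabilizers conjugate to $G_v$, and trivial edge stabilizers.
\item \emph{Structure theorem.} The fundamental theorem of Bass--Serre theory says that a group acting without inversions on a tree is isomorphic to the $\pi_1^{\mathrm{BS}}$ of the quotient graph of groups. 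Applied to $\mathcal T$, this gives $\pi_1^{\mathrm{orb}}(Y)\cong\pi_1^{\mathrm{BS}}(Y)$, with $\mathcal T\cong\widetilde Y$ equivariantly.
\end{itemize}

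I expect this last step to be the main obstacle. It needs a precise definition of orbifold coverings of a $1$-dimensional orbifold, together with a check that the orbifold structure really is the one induced from $X$. Concretely, one must verify that the local groups seen by the orbifold are exactly the $G_v$. That holds because $G$ is finite and the action on $X$ is free on edges.
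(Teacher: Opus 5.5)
Your proposal is correct and follows the route the paper relies on. You specialize the Dicks--Dunwoody presentation to trivial edge groups, pass to the Bass--Serre tree, and identify that tree with the universal orbifold cover via the structure theorem; the paper only cites these facts, and you supply the details. For the step you flag as the main obstacle, note that $Y=X/G$ is a global quotient by a finite group, so the universal orbifold cover is just the universal cover $\widetilde{X}$ of $X$: a genuine tree on which the group of lifts of elements of $G$ acts with vertex stabilizers isomorphic to the $G_v$ and trivial edge stabilizers. This also corrects your phrase that an orbifold cover of $Y$ is ``an honest graph'': only the universal one is ($Y$ itself is an orbifold cover of $Y$), and this is exactly where goodness of the orbifold enters.
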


\medskip

In particular, a vertex with trivial stabilizer contributes nothing, while a vertex with
stabilizer $\mathbb{Z}_2$ contributes a factor of~$\mathbb{Z}_2$.
This description will be used later to compute the orbifold fundamental groups of certain strata explicitly, where the nontrivial vertex stabilizers encode the monodromy arising from permuting zeroes of equal order in mixed--angulations.

%% file: sec_ex.tex
\section{Examples and the New Case}
\label{sec:examples}

In Proposition~\ref{prop:relationsinkernel_gen} we constructed, for any undecorated and unframed stratum of meromorphic quadratic differentials with higher-order zeroes, a natural
surjective morphism
\[ \pi_1\EG^\star(\sow)/\Rel^\star(\sow)
\;\twoheadrightarrow\; \pi_1\Quad(\sow), \]
where $\pi_1\EG^\star(\sow)$ is the quotient graph by the mapping class group  (see Remark~\ref{rem:EG_wstar}).
We focus on the stratum of quadratic differentials on the sphere with $4$ singularities. In other words, we have the cases
\begin{itemize}
	\item[(i)] one pole: the marked surface $\mathbf{S}$ is a disc and there are $3$ decorations,
	\item[(ii)] two poles: the marked surface $\mathbf{S}$ is a cylinder and there are $2$ decorations.
\end{itemize}
The goal of this section is to examine whether this surjective morphism is in fact an isomorphism.

We proceed in three steps.  
First, we compute the fundamental group of $\Quad(\sow)$ directly in the
case of genus $g=0$ with a single pole.  This provides a baseline
calculation independent of the exchange-graph formalism, which we will use
for comparison.  
Next, as a sanity check, we test the exchange graph approach in the
well-studied example $\Quad(1,1,1,-7):= \Quad(\sow)$ for $\mathbf{w} = (1,1,1)$ and $\mathbf{S}$ the disc with $5$ marked points on its boundary, showing that the group obtained from $\pi_1\EG^\star(\sow) \big/ \Rel^\star(\sow)$ agrees with the known presentation.  
Finally, we turn to new examples, namely strata on the sphere with higher-order zeroes and at most two poles, where—as usual—the poles have order at least~$3$. In this setting, we show that the surjective map above is in fact an isomorphism.

\subsection{One Pole on the Sphere: Direct Computation}

Let $\mathbf{S_w}$ be a wDMS with only one boundary component and $n>2$ labelled zeroes with zero orders given by weights $w~=~(k_1,...,k_n)$ and $-(k_1+...+k_n+2)$ many marked points.

\begin{prop}\label{prop:piQuad=PZ}
	The fundamental group of the stratum with labelled zeroes is the direct product
	\begin{align*}
		\pi_1\Quad^\mathrm{lab}(\sow)&=  \Pure_{n-2}(\bC\backslash\{0,1\}) \times \Z.
	\end{align*}
	of a pure surface braid group over the twice punctured plane and $\Z$.
\end{prop}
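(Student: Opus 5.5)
On the sphere with a single pole, a differential in the stratum can be written globally. After fixing the pole at $\infty$, we have $\phi = c\prod_{i=1}^n (z-z_i)^{k_i}\,dz^2$ with $c\in\bC^*$ and $(z_1,\dots,z_n)\in \F_n(\bC)$. Two such expressions give the same point of $\Quad^{\mathrm{lab}}(\sow)$ exactly when they differ by an affine map $z\mapsto az+b$. Note that $a$ and $b$ fix $\infty$, and the automorphisms of $\PP^1$ fixing the pole are precisely these. Hence
\[
\Quad^{\mathrm{lab}}(\sow)\;\cong\;\big(\F_n(\bC)\times\bC^*\big)/\mathrm{Aff}(\bC),
\]
where $(a,b)$ acts by $z_i\mapsto az_i+b$ and $c\mapsto c\,a^{-(\|\mathbf{w}\|+2)}$. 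Here $\|\mathbf{w}\|+2=\sum k_i+2$ is the total degree, computed from $dz^2\mapsto a^2dz^2$ together with $\prod (z-z_i)^{k_i}$ scaling by $a^{\sum k_i}$. The plan is to compute the fundamental group of this quotient by choosing a slice.

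First, we use the translations and the scalings $a\in\bC^*$ to normalise $z_1=0$ and $z_2=1$, which is possible since $n>2$ and the points are distinct. This slice is transversal and meets every orbit exactly once, because the affine map sending two distinct labelled points to $0,1$ is unique. The stabiliser is trivial, so with labelled zeroes we get an honest manifold and no orbifold points. This identifies
\[
\Quad^{\mathrm{lab}}(\sow)\;\cong\;\F_{n-2}(\bC\setminus\{0,1\})\times\bC^*,
\]
via $(z_3,\dots,z_n,c)$, and the residual $c$ is an unconstrained coordinate in $\bC^*$. Taking $\pi_1$ gives $\Pure_{n-2}(\bC\setminus\{0,1\})\times\Z$, by Definition~\ref{def:surfacebraidgroup} applied to the twice-punctured plane (homotopy equivalent to a compact surface with boundary). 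This is consistent with Lemma~\ref{lem:PBsplit}: the $\Z$ factor is not $\PB_2$ but the scaling of $c$.

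The main obstacle is justifying the identification of the stratum $\Quad^{\mathrm{lab}}(\sow)$, defined via framings modulo $\MCG(\sow)$, with the naive parameter space above. Concretely, one must check that for a genus-$0$ surface with one pole, a labelled differential is determined by its zero positions and leading coefficient up to biholomorphism. This follows because every such $\phi$ is a global rational section with a unique pole at $\infty$. One must also check that no further identifications occur: the only biholomorphisms fixing $\infty$ are affine, and they cannot permute labelled zeroes of fixed labels. Finally, the count of marked points $\sum k_i-2\cdot$ (sign convention) must match the pole order $\sum k_i+4$ minus $2$, which is automatic by compatibility of $\mathbf{w}$. I expect to spend most care on ensuring the quotient is by $\mathrm{Aff}(\bC)$ and not by a finite extension of it. Such an extension would arise from rotations preserving the marked directions at the pole, but those are absorbed into the $c$-coordinate rather than giving extra stabilisers.
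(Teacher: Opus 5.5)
Your proposal is correct and follows the same route as the paper: placing the pole at $\infty$ and two labelled zeroes at $0$ and $1$ identifies $\Quad^{\mathrm{lab}}(\sow)$ with $\F_{n-2}(\bC\setminus\{0,1\})\times\bC^*$, where the $\bC^*$-factor is the leading coefficient $c$. Phrasing this as a free $\mathrm{Aff}(\bC)$-action with a global slice only makes explicit the uniqueness the paper asserts, and the finite extension you worry about does not occur, since the rotations $z\mapsto\zeta z$ already lie in $\mathrm{Aff}(\bC)$.
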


Although this decomposition resembles the structure
\[\PB_n \simeq \Pure_{n-2}(\bC\backslash\{0,1\}) \rtimes_\varphi \Z\] 
of the pure braid group as shown in Lemma~\ref{lem:PBsplit}.
The product in our case is \emph{direct} rather than semi direct, reflecting the fact that the $\mathbb{C}^\times$-action on $\Quad^\mathrm{lab}_{\mathbf{w}}(\mathbf{\mathbf{S}})$ is geometrically independent of the configuration of zeroes.

For $n=3$ this semi direct product is a direct one.
So in this instance the pure braid group $\PB_3$ coincides with the fundamental group $\pi_1\Quad^\mathrm{lab}(\sow) = \Z \times F_2$.

\begin{proof}[Proof of the Proposition]
Fix a coordinate sending the pole to $\infty$ and two labelled zeroes to
$0$ and $1$. Every $q\in\Quad^\mathrm{lab}(\sow)$ can then be written uniquely as
\[
q = c\,\prod_{i=1}^n (z-\lambda_i)^{k_i}\,dz^2,
\qquad
\lambda_1=0,\ \lambda_2=1,\ 
\lambda_3,\dots,\lambda_n\in\bC\setminus\{0,1\},
\]
with $c\in\bC^\times$. Hence
\[
\PP\Quad^\mathrm{lab}(\sow) \;\cong\;
\F_{n-2}(\bC\setminus\{0,1\})\]
and
\[
\Quad^\mathrm{lab}(\sow) \;\cong\;
\PP\Quad^\mathrm{lab}(\sow) \times\bC^\times.
\]
Therefore,
\[
\pi_1\Quad^\mathrm{lab}(\sow) \cong
\pi_1(F_{n-2}(\bC\setminus\{0,1\}))\times\Z.
\]
\end{proof}


\subsection{The Sanity Check: $\Quad(1,1,1,-7)$}
\label{sec:111-7}

We now want to check Corollary~\ref{remKQ} for the case three simple zeroes and one pole of order $7$ on the sphere. 

\medskip
\noindent\textbf{Labelled Zeroes.} From the previous subsection we know that $$\pi_1\Quad^\mathrm{lab}(1,1,1,-7) = \Z \times F_2.$$

We now recover the same fundamental group from the exchange graph
(see Remark~\ref{remKQ}).
We construct the exchange graph $(\EG^\star)^\mathrm{lab}(\sow)$, whose
vertices correspond to triangulations of a pentagon, considered up to
rotation, with each triangle labelled.

From any vertex one may perform a forward flip along either the left or
the right interior edge; denote these directed edges by $\ell$ and $r$,
respectively. The resulting exchange graph is shown in
Figure~\ref{eg1117}.

\begin{figure}[htbp!]
	\centering
	\input{3_Paper_S/Pictures/EG111}
	\caption{Exchange\\
		 graph for $\Quad^{\mathrm{lab}}(1,1,1,-7)$.}
	\label{eg1117}
\end{figure}

The fundamental group of the underlying coarse graph is free on the edges
which are not contained in a chosen spanning tree. Choosing the bold edges
as a spanning tree in Figure~\ref{eg1117}, we obtain the free generators
\[
d_1, d_2, d_3, d_4, a, b, c.
\]
Since each triangulation has exactly two interior edges, and these edges
are adjacent only once, all relations arise from pentagon moves. They read
\[
d_3 d_1
= c
= d_2 d_4
= a
= d_1 d_3
= b
= d_4 d_2.
\]
Thus, after quotienting by these relations, we obtain
\[
\langle z, d_1, d_2 \mid [z,d_1], [z,d_2] \rangle
=
\langle z, \alpha, \beta \mid [z,\alpha], [z,\beta] \rangle
\cong \mathbb{Z} \times F_2,
\]
where $z := a=b=c$, $\alpha := d_1 c$, and $\beta := a d_2$.

These generators admit the following geometric interpretation:
\begin{itemize}
	\item $z$ corresponds to a full rotation, i.e.\ the natural $\mathbb{C}^\star$–action,
	\item $\alpha$ is the pure braid twist around the first and second zero,
	\item $\beta$ is the pure braid twist around the second and third zero.
\end{itemize}
This agrees with the direct computation above and confirms
Proposition~\ref{theorem:quadeg} in this case.

\medskip
\noindent\textbf{Unlabelled Zeroes.} We now forget the ordering of the zeroes.
Consider the covering
\[
\Quad^{\mathrm{lab}}(\sow) \longrightarrow \Quad(\sow)
\]
with covering group $S_3$.
It is a classical fact that
\[
\pi_1\Quad(1,1,1,-7)
\simeq
B_3
=
\langle a,b \mid aba = bab \rangle,
\]
which we now verify again via the exchange graph.
Passing to the quotient $\EG^\star(\sow)$ yields a
graph with a single vertex and two loops, labelled $a$ and $b$,
corresponding to the two possible flips of the interior edges
(Figure~\ref{EG111_unlabelled}).

\begin{figure}[htbp!]
	\centering
	\begin{tikzpicture}[scale=.35]
		\coordinate (A) at (1,1);
		\foreach \ang in {18,90,162,234,306}{
			\coordinate (P\ang) at ($(A) + (\ang:2.25)$);
			\fill (P\ang) circle (1.2mm);
		}
		\draw (P18) -- (P90);
		\draw (P90) -- (P162);
		\draw (P162) -- (P234);
		\draw (P234) -- (P306);
		\draw (P306) -- (P18);
		\draw (P90) -- (P234);
		\draw (P90) -- (P306);
		\draw [->] ($(A) + (3,1)$) to [in=330,out=30,looseness=5] ($(A) + (3,-1)$);
		\node at ($(A) + (5,0)$) {\small $a$};
		\draw [->] ($(A) + (-3,-1)$) to [in=150,out=210,looseness=5] ($(A) + (-3,1)$);
		\node at ($(A) + (-5,0)$) {\small $b$};
	\end{tikzpicture}
	\caption{The quotient $\EG^\star(\sow)$.}
	\label{EG111_unlabelled}
\end{figure}

The fundamental group of this graph is the free group on two generators,
\[
\pi_1\EG^\star(\sow)
=
\langle \ell, r \rangle.
\]
We describe the induced map
\[
g \colon
\pi_1\EG^\star(\sow)
\longrightarrow
\pi_1\Quad(\sow).
\]

We interpret the labelled triangles as numbered strands.
In $\B_3$, the generator $a$ is the braid twist between strands 1 and 2,
and $b$ is the twist between strands 2 and 3.
Inspecting Figure~\ref{eg1117}, we find:
\begin{itemize}
	\item $r$ corresponds to $ba$ (counterclockwise rotation),
	\item $\ell$ corresponds to $aba$ (the twist between strands 1 and 3).
\end{itemize}
Thus
\[
g(\ell) = aba,
\qquad
g(r) = ba.
\]

There are no square relations, and the only pentagon relation is
$\ell^2 = r^3$.
Using $aba = bab$ in $\B_3$, we compute
\[
g(\ell^{-2} r^3)
=
g(\ell)^{-2} g(r)^3
=
(aba)^{-2} (ababab)
=
1,
\]
so $g$ factors through the quotient
\[
\pi_1\EG^\star(\sow)/\langle \ell^2=r^3\rangle
=
\langle \ell, r \mid \ell^2 = r^3 \rangle
\longrightarrow
\B_3
=
\pi_1\Quad(1,1,1,-7).
\]
This map is an isomorphism with inverse
\[
a \mapsto r\ell^{-1},
\qquad
b \mapsto \ell^{-1} r^2.
\]

\medskip
\noindent\textbf{Conclusion.}
The quotient of the fundamental group of the exchange graph by the single pentagon relation $\ell^2 = r^3$ is isomorphic to $\pi_1\Quad(1,1,1,-7)$ via the induced map.
This serves as a consistency check for Corollary~\ref{remKQ}.

\subsection{Three zeroes and one pole on $\mathbb{P}^1$}
\label{sec:3zero1pole}

We now turn to strata of unframed and unlabelled differentials on the genus $0$ surface with a single boundary component and weight $\mathbf{w} = (i,j,k)$.
In other words, quadratic differentials on $\mathbb{P}^1$ with a single pole, and three zeroes which need not have distinct orders.

By Proposition~\ref{prop:relationsinkernel_gen} the map from Lemma~\ref{lem:embeddingQuadS_w} factors through the quotient
\[
\pi_1\EG^\star(\sow)/\Rel^\star(\sow)
\;\twoheadrightarrow\;
\pi_1\Quad(\sow).
\]
Our aim is to describe the exchange graph $\EG^\star(\sow)$ for the unframed and unlabelled stratum $\Quad(\sow)$, identify the generators of its fundamental group, record the local relations $\Rel^\star(\sow)$ (squares, pentagons, and both kinds of hexagons), and finally to compute the resulting quotients in all cases including those where identical zeroes or parity conditions on zero orders produce additional symmetries that must be taken into account.

\medskip
\noindent\textbf{All Three Zeroes of Different Order.}
We begin with the case in which the three zeroes have distinct orders.
The quotient $\EG(\mathbf{S}_\mathbf{w})/\MCG(\mathbf{S}_\mathbf{w})$ is shown in Figure~\ref{EGijk}.

\begin{figure}[ht]
	\centering
	\input{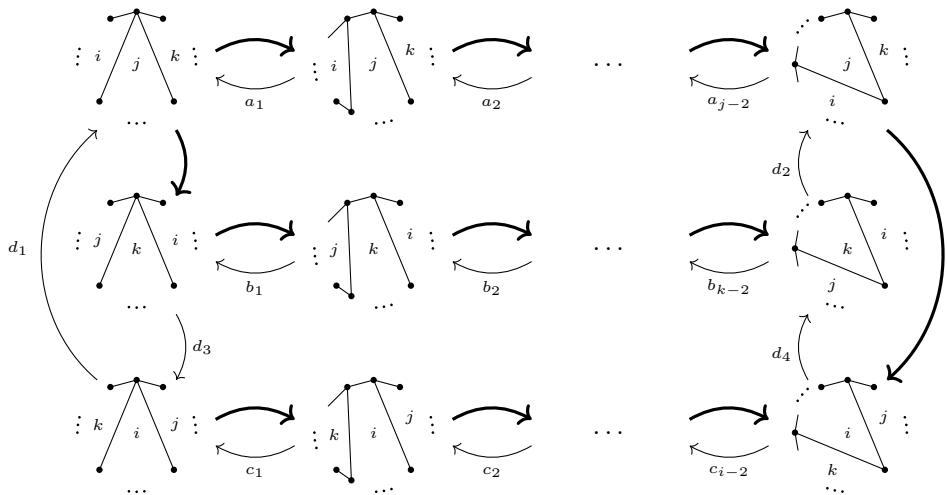}
	\caption{Exchange graph of the stratum $(i,j,k)$ with $i,j,k$ pairwise distinct.}
	\label{EGijk}
\end{figure}

The fundamental group of this graph is a free group generated by closed paths corresponding to the edges in the complement of a chosen spanning tree. Fixing such a spanning tree and a base vertex yields generators
\[
a_1,\dots,a_{j-2},\;
b_1,\dots,b_{k-2},\;
c_1,\dots,c_{i-2},\;
d_1,d_2,d_3,d_4.
\]
Each generator is represented by an oriented edge outside the spanning tree together with the associated closed path based at the chosen vertex, obtained by traversing the unique path in the spanning tree to the initial vertex of the edge, crossing the edge, and returning along the spanning tree. By abuse of notation, we denote both the edge and the corresponding closed path by the same symbol. In Figure~\ref{EGijk}, the spanning tree is indicated by bold arrows.

At each vertex one obtains either a pentagon relation (if the two interior
edges intersect) or a square relation (if they are disjoint):
\begin{align*}
	d_1 d_3
	&= a_1 = \dots = a_{j-2} = d_2 d_4, \\
	d_1 d_3
	&= b_1 = \dots = b_{k-2} = d_4 d_2, \\
	d_3 d_1
	&= c_1 = \dots = c_{i-2} = d_2 d_4.
\end{align*}
Setting
\begin{align*}
z := a_1 = \dots = a_{j-2}
= b_1 = \dots = b_{k-2}
= c_1 = \dots = c_{i-2},
\\
\alpha := d_1 c_1 \cdots c_{i-2},
\qquad
\beta := a_1 \cdots a_{j-2} d_2,
\end{align*}
we obtain the quotient
\[
\langle z,\alpha,\beta \mid [z,\alpha],[z,\beta] \rangle
\cong \mathbb{Z} \times F_2.
\]

When some of the zeroes have the same order, the corresponding unlabelled
stratum is obtained as a quotient of the labelled stratum by the symmetric
group permuting the indistinguishable zeroes. This permutation action
extends to the wall-chamber decomposition and hence to the exchange
graph. For $i=j\neq k$ the relevant group is $S_2$, and for $i=j=k$ it is
$S_3$. In both cases the quotient graph encodes the wall-chamber
structure of the unlabelled stratum.

A key point is that an orbifold point appears precisely when there is a
chamber fixed by the symmetric group action. Geometrically, a chamber is
fixed if and only if the pole has an even number of horizontal prongs,
i.e.\ if and only if the corresponding pole order is even. In our notation
this occurs exactly when $k$ is even. Thus:
\begin{itemize}
	\item if $k$ is even, the pole has an even number of prongs, there is
	a fixed chamber, and the quotient acquires a single orbifold point
	of order~$2$;
	\item if $k$ is odd, the pole has an odd number of prongs, no chamber
	is fixed, and the action is free, so the quotient has no orbifold
	point.
\end{itemize}

\medskip
\noindent\textbf{Two Zeroes of the Same Order.}
\label{sec:2equal1different}

\medskip
\noindent\textbf{Case (A): $k$ even.}

\begin{figure}[htbp!]
	\centering
	\input{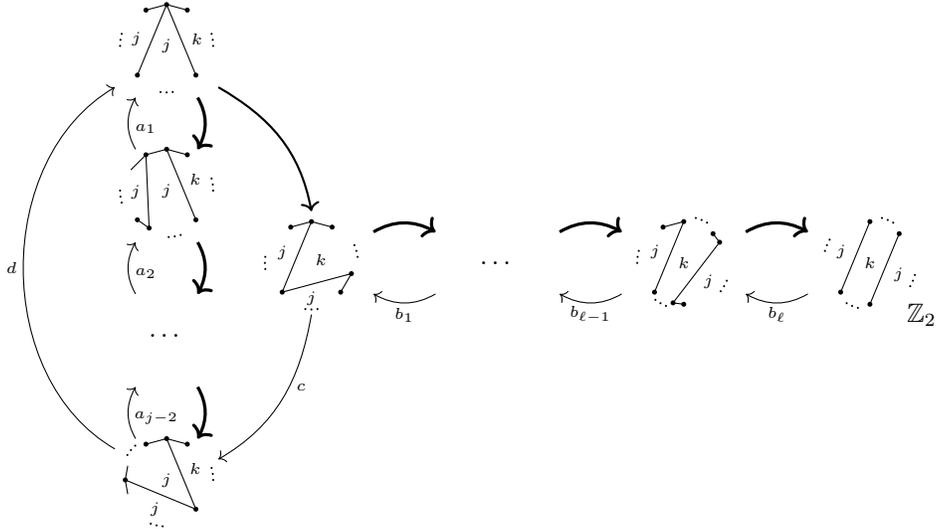}
	\caption{Coarse exchange graph of the stratum $(j,j,k)$ for $k$ even. Here $\ell: = \frac{k-2}{2}$.}
	\label{EGjjkeven}
\end{figure}

Assume $i=j \neq k$ and $k$ even. Figure~\ref{EGjjkeven} shows the coarse
exchange graph, obtained by naively quotienting the graph in Figure~\ref{EGijk} by the $S_2$-action. The stabilizers, when non-trivial, are indicated below the corresponding vertices.

The actual exchange graph has a single orbifold point of order~$2$,
corresponding to the unique fixed chamber under $S_2$. By Proposition~\ref{prop:orbpi1-graph} the orbifold
fundamental group is the free product of $\mathbb{Z}_2$ (generated by $s$)
with the free group on
\[
a_1,\dots,a_{j-2},\quad
b_1,\dots,b_\ell,\quad
c,\ d.
\]
Here $s$ is the loop in that chamber representing the local $\mathbb{Z}_2$
symmetry which swaps the two equal-order zeroes.

The pentagon and square relations are as before, except at the orbifold
vertex. There we obtain the square relation
\[
b_\ell = s^{-1} b_\ell s,
\]
which is the image of the square relation from the fundamental group of the labelled exchange graph into the one of the unlabelled one.
Intuitively: the flip $b_\ell$ acts on an edge fixed by $S_2$; to flip the
opposite edge one first applies $s$, then $b_\ell$, then $s^{-1}$.

Collecting relations
\[
dc = a_1 = \dots = a_{j-2} = cd,
\qquad
dc = b_1 = \dots = b_\ell = s^{-1} b_\ell s,
\]
and setting
\[
z := a_1 = \dots = a_{j-2}
= b_1 = \dots = b_\ell
= cd = dc,
\]
we obtain
\[
\langle z,s,d \mid [z,s],[z,d], s^2=1 \rangle.
\]

\medskip
\noindent\textbf{Case (B): $k$ odd.}

\begin{figure}[htbp!]
	\centering
	\input{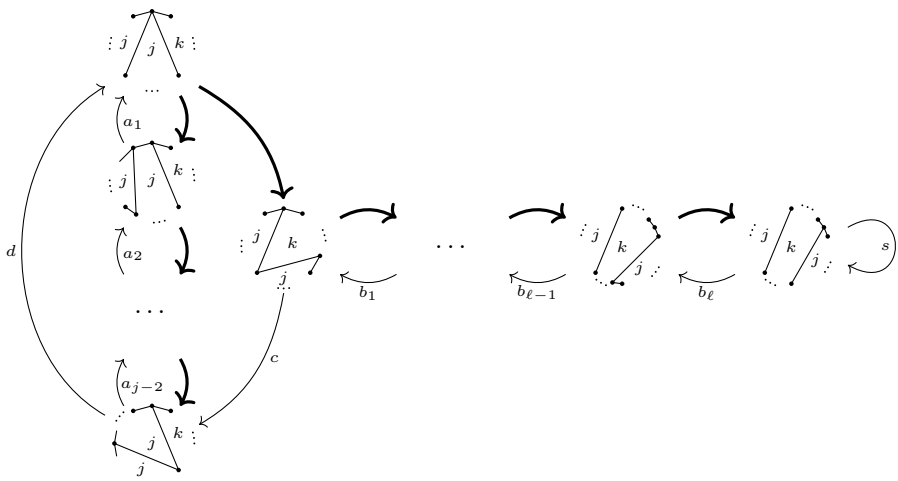}
	\caption{Coarse exchange graph of the stratum $(j,j,k)$ for $k$ odd. Here $\ell = \frac{k-3}{2}$.}
	\label{EGjjkodd}
\end{figure}

Now assume $i=j \neq k$ and $k$ odd. Figure~\ref{EGjjkodd} shows the coarse
exchange graph. In this case the $S_2$-action is free, so there is no
orbifold point.

The fundamental group is now the free group on
\[
a_1,\dots,a_{j-2},\quad
b_1,\dots,b_\ell,\quad
c,\ d,\ s.
\]
The relations are
\[
dc = a_1 = \dots = a_{j-2} = cd,
\qquad
dc = b_1 = \dots = b_\ell = s^2.
\]
Setting
\[
z := a_1 = \dots = a_{j-2}
= b_1 = \dots = b_\ell
= cd = dc,
\]
we obtain
\[
\langle z,s,d \mid [z,s],[z,d], s^2 = z \rangle
=
\langle s,d \mid [s^2,d] \rangle.
\]

\medskip
\noindent\textbf{All Three Zeroes of the Same Order.} Finally, consider $i=j=k$. The acting group is now~$S_3$.

\medskip
\noindent\textbf{Case (A): $k$ even.}

\begin{figure}[ht]
	\centering
	\input{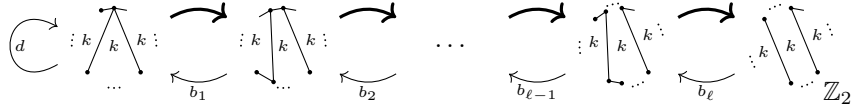}
	\caption{Coarse exchange graph of the stratum $(k,k,k)$ for $k$ even. Here $\ell = \frac{k-2}{2}$.}
	\label{EGkkkeven}
\end{figure}

The quotient has a single orbifold point of order~$2$.
By Proposition~\ref{prop:orbpi1-graph} the orbifold fundamental group is the free product of $\mathbb{Z}_2$
(generated by $s$) and the free group on
\[
b_1,\dots,b_\ell,\ d.
\]
The relations are
\[
d^3 = b_1 = \dots = b_\ell = s^{-1} b_\ell s.
\]
Setting
\[
z := b_1 = \dots = b_\ell = d^3,
\]
we obtain
\[
\langle z,s,d \mid [z,s],\ d^3=z,\ s^2=1 \rangle
=
\langle s,d \mid [d^3,s],\ s^2=1 \rangle
\]

\medskip
\noindent\textbf{Case (B): $k$ odd.}

\begin{figure}[htbp!]
	\centering
	\input{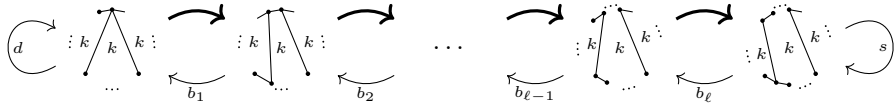}
	\caption{Coarse exchange graph of the stratum $(k,k,k)$ for $k$ odd. Here $\ell = \frac{k-3}{2}$.}
	\label{EGkkkodd}
\end{figure}

Now the $S_3$-action is free, so there is no orbifold point.
The fundamental group is free on
\[
b_1,\dots,b_\ell,\ d,\ s,
\]
and we have
\[
d^3 = b_1 = \dots = b_\ell = s^2.
\]
Letting $z := b_1 = \dots = b_\ell = d^3$, we obtain
\[
\langle z,s,d \mid [z,s],[z,d] \rangle
=
\langle s,d \mid d^3 = s^2 \rangle.
\]

\subsection{Two Zeroes and Two Poles on $\mathbb{P}^1$}
\label{sec:2zero2pole}

We now consider strata with two poles of orders $-k$ and $-l$ and two
zeroes of orders $i$ and $j$, with $i<j$ and $k<l$. Throughout we pass to the quotient by the mapping class group, so mixed-angulations which differ by a shear (after an appropriate cut--paste) are identified. Geometrically this quotients out Dehn twists along the cylinder.

By Corollary~\ref{cor:relationsinkernel_gen} there is a surjection
\[
\pi_1\EG^\star(\sow)/ \Rel^\star(\sow)
\;\twoheadrightarrow\;
\pi_1\Quad(\sow).
\]
The coarse shape of the exchange graph $\EG^\star(\sow)$ depends on the differences $k-i$ and $l-i$. We first treat the generic situation $l-i>3$ (the borderline case $l~-~i~=~3$ is handled separately; the case $l-i<3$ does not occur under our standing assumptions).

\medskip
\noindent\textbf{Left Half of the Exchange Graph (governed by $l-i$).}

\begin{figure}[htbp!]
	\centering
	\input{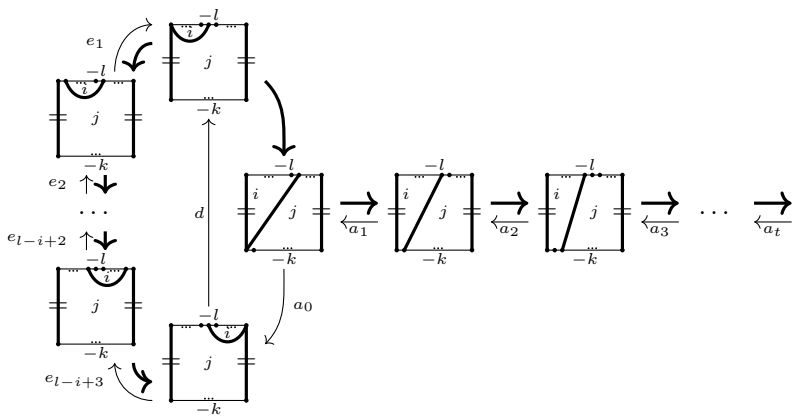}
	\caption{Left half of the exchange graph for $l-i>3$.}
	\label{EGijkl}
\end{figure}

Figure~\ref{EGijkl} shows the part of the exchange graph controlled by
$l-i>3$. The fundamental group of this subgraph is free on
\[
e_1,...,e_{l-i+3},d,a_0,a_1,...,a_t,
\quad
\text{together with generators from the right half.}
\]
At each vertex in this left half we have the usual pentagon/square
relations, which together imply
\[
z := a_0 d
= e_{l-i+3}
= ...
= e_2
= e_1
= d a_0
= a_1
= ...
= a_t.
\]
Thus this half collapses to the abelian subgroup
$\langle z,d\rangle$ with $[z,d]=1$.

\medskip
\noindent\textbf{Right Half (governed by $k-i$).} We distinguish three cases according to the value of $k-i$.

\medskip
\noindent\textbf{Case (A): $k-i\ge4$ or $k-i\le0$.}

\begin{figure}[htbp!]
	\centering
	\input{3_Paper_S/Pictures/cylinder1} \;
	\input{3_Paper_S/Pictures/cylinder5}
	\caption{Right half of the Exchange graph for $k-i\ge4$ (left) and $k-i\le0$ (right).}
	\label{cylinder15}
\end{figure}
See Figure~\ref{cylinder15} for the exchange graphs.
In both sub cases we add generators
$$a_{t+1}, c, b_1,\dots,b_{l-j+3}$$
and obtain the relations
\[
c \cdot a_{t+1}
= b_{l-j+3}
= \cdots
= b_2
= b_1
= a_{t+1} \cdot c
= a_t.
\]
Combining with the left half, we obtain
\[
\langle z,c,d \mid [z,c],[z,d] \rangle,
\]
i.e.\ $z$ is central and the group is
$\mathbb{Z} \times F_2$ with $F_2=\langle c,d\rangle$.

\medskip
\noindent\textbf{Case (B): $k-i=3$ or $k-i=1$.}

\begin{figure}[htbp!]
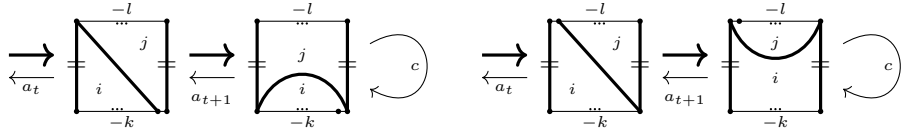

	\centering
	\input{3_Paper_S/Pictures/cylinder2} \;
	\input{3_Paper_S/Pictures/cylinder4}
	\caption{Right half of the Exchange graph: $k-i=3$ (left) and $k-i=1$ (right).}
	\label{cylinder24}
\end{figure}
See Figure~\ref{cylinder24} for the exchange graphs.

In both sub cases we add generators $a_{t+1}$ and $c$.
We have a square relation
$a_t = a_{t+1} \cdot c$
and at the right most vertex the hexagon (second type):
$a_{t+1} \cdot c = c \cdot a_{t+1}$.
Thus, we again obtain as the quotient
\[
\langle z,c,d \mid [z,c],[z,d] \rangle.
\]

\medskip
\noindent\textbf{Case (C): $k-i=2$.}
\begin{figure}[htbp!]
	\centering
	\input{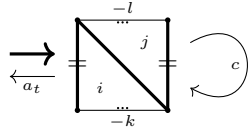}
	\caption{Right half of the Exchange graph for $k-i=2$.}
	\label{cylinder3}
\end{figure}

See Figure~\ref{cylinder3} for the right half of the exchange graph. We add $c$ and a hexagon relation (first type): $c \cdot a_t = a_t \cdot c$.
Hence, the quotient is
\[
\langle z,c,d \mid [z,c],[z,d] \rangle.
\]

\medskip
\noindent\textbf{Borderline Case on the Left: $l-i=3$.}

\begin{figure}[htbp!]
	\centering
	\input{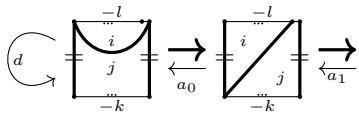}
	\caption{Left half for $l-i=3$.}
	\label{cylinderl-i3}
\end{figure}

Replacing the left half by the $l-i=3$ subgraph (Figure~\ref{cylinderl-i3})
produces the same relations on $z$ and $d$, and the same coupling to the
right half, so the quotient remains
\[
\langle z,c,d \mid [z,c],[z,d] \rangle.
\]

\medskip
\noindent\textbf{Equal Order Poles and Different Order Zeroes: $l=k$ and $i<j$.} As in Section~\ref{sec:3zero1pole}, orbifold points in the $S_2$-quotient
exchange graph arise precisely when the corresponding differential has a
nontrivial stabilizer under the permutation of identical singularities.
In this case this occurs when $i$ (and automatically $j$) is even.
For odd $i$ the action is free and the quotient has no orbifold points.

\medskip
\noindent\textbf{Case (A): $i$ even.}

\begin{figure}[htbp!]
	\centering
	\input{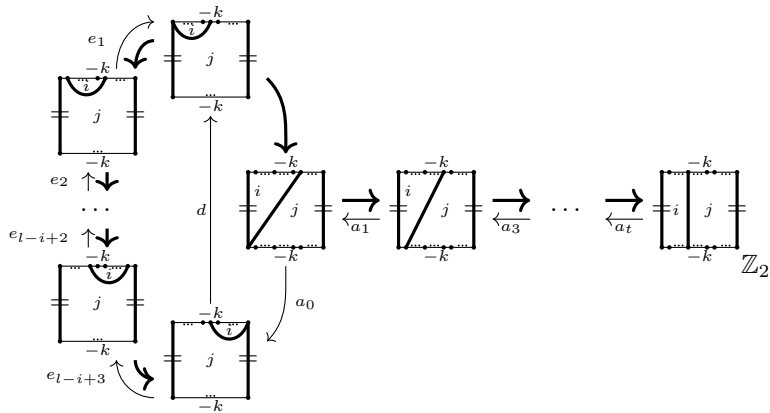}
	\caption{Coarse exchange graph of $(i,j,-k,-k)$ for $i,j$ even.}
	\label{fig:EGijkk}
\end{figure}
Assume $i<j$, $k=l$ and $i$ even. Figure~\ref{fig:EGijkk} shows the coarse
exchange graph, obtained by naively quotienting
Figure~\ref{EGijk} by the $S_2$-action. The stabilizers, when non-trivial, are indicated below the corresponding vertices.
Passing to the $S_2$-quotient swapping the equal poles yields a single
orbifold point at the outer right vertex precisely when $i$ is even. 

The orbifold loop $c$ generates $\mathbb{Z}_2$.
The only new square relation is
$a_t = c^{-1} a_t c$, i.e.\ $[z,c]=1$.
Hence, the quotient of the orbifold fundamental group is
\[
\langle z,c,d \mid [z,c],[z,d],\ c^2=1 \rangle.
\]

\medskip
\noindent\textbf{Case (B): $i$ odd.}

Now assume $i<j$, $k=l$ and $i$ odd. In this case the $S_2$-action is free, so there is no orbifold point. 
The exchange graph is similar to the coarse graph in Figure~\ref{fig:EGijkk}, but with an additional self-loop $c$ at the outer right vertex. This loop represents a genuine free generator rather than an orbifold symmetry. The new square relation is $c^2=a_t=z$, so $z=c^2$ is central. We obtain
\[
\langle c,d \mid [c^2,d] \rangle.
\]

\medskip
\noindent\textbf{Equal Order Zeroes and Different Order Poles: $i=j$ and $l > k$.}

\begin{figure}[htbp!]
	\centering
	\input{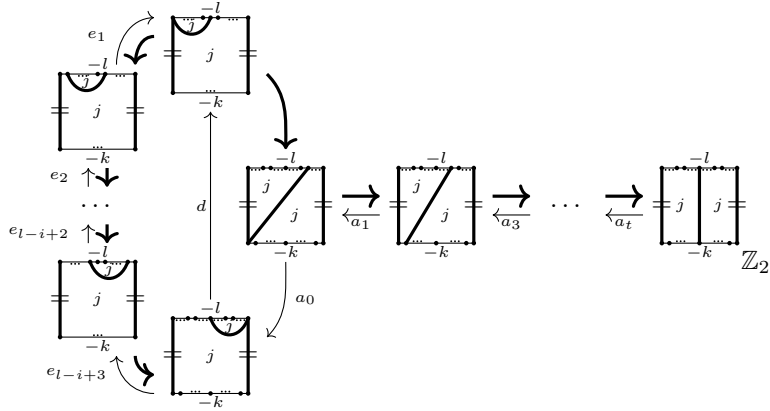}
	\caption{Coarse exchange graph of $(j,j,k,l)$ (displayed for $k,l$ even).}
	\label{fig:EGjjkl}
\end{figure}

The situation with equal zeroes is analogous: there is an $S_2$-action
interchanging the two equal-order zeroes. 

\medskip
\noindent\textbf{Case (A): $k$ even.}
If $k$ is even (and hence $l$ even), the quotient has an orbifold point of order~$2$ at the outer right vertex. The exchange graph is in Figure~\ref{fig:EGjjkl}. The resulting group is
\[
\langle z,c,d \mid [z,c],[z,d],\ c^2=1 \rangle.
\]

\medskip
\noindent\textbf{Case (B): $k$ odd.}
If $k$ is odd do not have any orbifold points.
The exchange graph is similar to the coarse graph in Figure~\ref{fig:EGjjkl}, but with an additional self-loop $c$ at the outer right vertex.
The resulting group is 
\[
\langle c,d \mid [c^2,d] \rangle.
\]

\medskip
\noindent\textbf{Both pairs equal: $i=j$ and $k=l$.} In this case the $S_2$-action on the zeroes and the $S_2$-action define the same action on the graph.
Hence the $S_2 \times S_2$ quotient and the $S_2$ quotient coincide.

\begin{figure}[htbp!]
	\centering
	\input{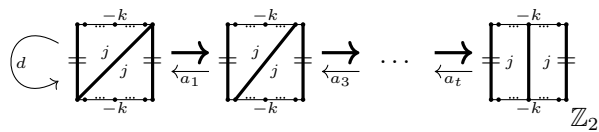}
	\caption{Coarse exchange graph of $(j,j,-k,-k)$ (displayed for $k$ even).}
	\label{fig:EGjjkk}
\end{figure}

\medskip
\noindent\textbf{Case (A): $k,i$ even.} If $k$ and $i$ are even, the quotient has an orbifold point of order~$2$ at the outer right
vertex. The exchange graph is in Figure~\ref{fig:EGjjkk}. The resulting group is
\[
\langle z,c,d \mid [z,c],[z,d],\ c^2=1 \rangle.
\]

\medskip
\noindent\textbf{Case (B): $k,i$ odd.}
If $k$ and $i$ are odd there are not any orbifold points.
The exchange graph is similar to the coarse graph in Figure~\ref{fig:EGjjkl}, but with an additional self-loop $c$ at the outer right vertex.
The resulting group is 
\[
\langle c,d \mid [c^2,d] \rangle.
\]

\subsection{Comparison and Result}
\label{sec:comparison-four}

By Corollary~\ref{cor:relationsinkernel_gen}, there is a natural surjective homomorphism
\[
\pi_1\EG^\star(\sow)/\Rel^\star(\sow)
\;\longrightarrow\;
\pi_1\Quad(\sow),
\]
obtained by sending a closed path in the exchange graph to the corresponding loop in the
stratum defined by the induced sequence of flips and rotations of the associated
mixed-angulation.

In the case of genus~\(0\) with four singularities, the relations collected in
\(\Rel^\star(\sow)\) already generate the full kernel of this map:

\begin{theorem}
	\label{thm:isomorphism-four-singularity}
	For $g=0$ and four singularities we have a natural isomorphism
	\[
	\pi_1\EG^\star(\sow)/\Rel^\star(\sow)
	\;\cong\;
	\pi_1\Quad(\sow).
	\]
	Moreover, the resulting group can be read off from the symmetry type:
	\begin{itemize}
		\item[(a)] $\mathbb{Z} \times F_2$, if all four singularities have distinct orders;
		\item[(b)] $\langle z,c,d \mid [z,c],[z,d],\ c^2=1\rangle$, if two singularities have identical order, and the remaining two have even order not coinciding with the first two;
		\item[(c)] $\langle c,d \mid [c^2,d]\rangle$, if two singularities have identical order, and the remaining two have odd order not coinciding with the first two;
		\item[(d)] $\langle s,d \mid [d^3,s], s^2 \rangle$, if there are three zeroes of identical even order;
		\item[(e)] $B_3 = \langle s,d \mid d^3 =s^2 \rangle$, if there are three zeroes of identical odd order.
	\end{itemize}
\end{theorem}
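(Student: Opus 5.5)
By Corollary~\ref{cor:relationsinkernel_gen} we already have a surjection $\bar g\colon \pi_1\EG^\star(\sow)/\Rel^\star(\sow)\twoheadrightarrow \pi_1\Quad(\sow)$. The case-by-case computations of Subsections~\ref{sec:3zero1pole} and~\ref{sec:2zero2pole} give explicit presentations of the source in each symmetry type (a)--(e). The plan is therefore to show injectivity of $\bar g$ by computing $\pi_1\Quad(\sow)$ independently and checking that a surjection between the two groups must be an isomorphism.

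\textbf{Step 1: the labelled, distinct-order case (a).} For one pole, Proposition~\ref{prop:piQuad=PZ} with $n=3$ gives $\pi_1\Quad^{\mathrm{lab}}(\sow)\cong \Z\times F_2$. For two poles, I would argue analogously. Normalise the two poles to $0,\infty$ and one zero to $1$. The differential is then $c\,(z-1)^i(z-\lambda)^j z^{-k}dz^2$ with $\lambda\in\bC\setminus\{0,1\}$ and $c\in\bC^\times$. After accounting for the discrete residual coordinate choices, this again gives $\bC^\times\times(\bC\setminus\{0,1\})$, so the group is $\Z\times F_2$. On the exchange-graph side we computed $\langle z,\alpha,\beta\mid [z,\alpha],[z,\beta]\rangle\cong\Z\times F_2$. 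I would then use that $\Z\times F_2$ is Hopfian, being finitely generated and residually finite. Hence any surjection $\Z\times F_2\twoheadrightarrow\Z\times F_2$ is an isomorphism, and $\bar g$ is injective. The identifications of $z$ with the $\bC^\times$-rotation and of $\alpha,\beta$ with pure braid twists, as in the sanity check of Subsection~\ref{sec:111-7}, give a more explicit inverse if one prefers.

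\textbf{Step 2: the symmetric cases (b)--(e).} Here $\Quad(\sow)$ is the orbifold quotient of $\Quad^{\mathrm{lab}}(\sow)$ by $G=S_2$ or $S_3$, which gives an extension
\[
1\to \pi_1\Quad^{\mathrm{lab}}(\sow)\to \pi_1\Quad(\sow)\to G\to 1 .
\]
The exchange graph sits in the same way: $(\EG^\star)^{\mathrm{lab}}\to\EG^\star$ is a $G$-orbifold covering of graphs of groups in the sense of Proposition~\ref{prop:orbpi1-graph}. The relations $\Rel^\star$ are the images of the labelled ones, so there is a compatible extension
\[
1\to \pi_1(\EG^\star)^{\mathrm{lab}}/\Rel^{\mathrm{lab}}\to \pi_1\EG^\star/\Rel^\star\to G\to 1 .
\]
The map $\bar g$ induces a morphism between these two extensions. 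This morphism is the identity on $G$ and, by Step 1, an isomorphism on the kernels. The five lemma then gives that $\bar g$ is an isomorphism. As a check, the abstract groups listed in (b)--(e) are the expected ones: $B_3$ in case (e) matches the classical $\pi_1\Quad(1,1,1,-7)$, and case (d) is $B_3$ modified by the orbifold point $s^2=1$.

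\textbf{Main obstacle.} The delicate step is Step 2. One must verify that the quotient $\pi_1\EG^\star/\Rel^\star$ genuinely fits into the extension with kernel the labelled quotient, that is, that $\Rel^\star$ intersected with the labelled subgroup is exactly $\Rel^{\mathrm{lab}}$. I would prove this by lifting each relation loop through the covering, using that edge stabilisers are trivial so that loops lift uniquely once a start vertex is chosen. At orbifold vertices, the relations of the form $b_\ell=s^{-1}b_\ell s$ are exactly the images of labelled squares. Finally, I would check that the map to $G$ has the correct image, sending $s$ to the transposition, which holds because $s$ is the local swap of equal-order singularities.
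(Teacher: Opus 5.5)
Your plan is sound in outline, and it takes a genuinely different route from the paper.

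\textbf{Comparison with the paper.} The paper splits off the \emph{infinite} centre. It quotients both $\pi_1\EG^\star(\sow)/\Rel^\star(\sow)$ and $\pi_1\Quad(\sow)$ by the central $\Z$ coming from the $\bC^\star$-rotation. It then compares the quotient $\Gamma$ with $\pi_1^{\mathrm{orb}}$ of $\PP\Quad(\sow)\cong M_{0,4}$, $M_{0,[2]+2}$ or $M_{0,[3]+1}$, arguing that flip cycles, stabiliser generators and self-loops go to the standard generators there.

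You split off the \emph{finite} relabelling group $G$ instead. Injectivity in the labelled case comes from Hopficity of $\Z\times F_2$, and the symmetric cases then follow from a morphism of extensions by $G$ and the short five lemma.

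What your route buys is an honest injectivity argument. The paper's step \emph{generators map to standard generators, hence $\bar\Phi$ is an isomorphism} is really an implicit comparison of presentations. Your route also never needs $\pi_1^{\mathrm{orb}}$ of the moduli spaces of four points. What the paper's route buys is a uniform geometric reading of the generators and of where the torsion comes from.

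The step you flag as the main obstacle is in fact formal. $\Rel^\star(\sow)$ is defined as the image of the $\MCG(\sow)$-invariant subgroup $\Rel(\sow)$, so inside $\pi_1\EG^\star(\sow)$ it coincides with the labelled relation subgroup. The exchange-graph extension is therefore just the Bass--Serre extension $1\to\pi_1(\EG^\star)^{\mathrm{lab}}\to\pi_1\EG^\star(\sow)\to G\to 1$, divided by a normal subgroup of its kernel.

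\textbf{What still needs work: the left-hand input to the five lemma.} That map lives on the labelled cover of a stratum with \emph{coinciding} orders, whereas your Step~1 only treats distinct orders.
\begin{itemize}
\item On the stratum side this is harmless. Your normalisation, and Proposition~\ref{prop:piQuad=PZ}, give $\Z\times F_2$ for every labelled stratum.
\item For one pole the labelled graph is that of Figure~\ref{EGijk} with $i=j$ or $i=j=k$, so the same computation applies.
\item For two poles with exactly one coincidence one still has $l-i\ge 3$, so the paper's case analysis should go through with labels.
\item For $(j,j,-k,-k)$, however, the degree condition forces $k=j+2$, i.e.\ $l-i=2$. This lies outside the paper's two-pole analysis, which assumes $i<j$, $k<l$ and hence $l-i\ge3$. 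The labelled exchange graph must be computed separately there before Hopficity can be invoked.
\end{itemize}

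In that same case the relabelling group is $S_2\times S_2$, not $S_2$, and its diagonal element acts trivially. Every differential $c(z-1)^j(z-\lambda)^j z^{-k}dz^2$ with $k=j+2$ is invariant under $z\mapsto\lambda/z$. So that element fixes all of $\Quad^{\mathrm{lab}}(\sow)$ and every vertex and edge of the labelled graph.

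You must therefore use the effective quotient $S_2$ consistently on both sides. Otherwise the edge stabilisers are nontrivial, Proposition~\ref{prop:orbpi1-graph} no longer applies, and the two extensions are by different groups.
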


\begin{proof}
	We show that the natural surjection
	\[
	\pi_1\EG^\star(\sow)/\Rel^\star(\sow)\longrightarrow \pi_1\Quad(\sow)
	\]
	is an isomorphism.
	
	\smallskip
	In all cases, the center of
	$\pi_1\EG^\star(\sow)/\Rel^\star(\sow)$ is infinite cyclic, generated by one side of a
	square or pentagon relation. Geometrically, this generator corresponds to the global
	$2\pi$-rotation of the quadratic differential induced by the $\mathbb{C}^\star$-action.
	Consequently, in each case we obtain a short exact sequence
	\[
	1 \longrightarrow \Z \longrightarrow \pi_1\EG^\star(\sow)/\Rel^\star(\sow)
	\longrightarrow \Gamma \longrightarrow 1,
	\]
	where the quotient group $\Gamma$ depends only on the symmetry type of the singularities.
	Explicitly,
	\begin{itemize}
		\item[(a)] $\Gamma = F_2$,
		\item[(b)] $\Gamma = \Z \ast \Z_2$,
		\item[(c)] $\Gamma = \Z \ast \Z_2$,
		\item[(d)] $\Gamma = \mathrm{PSL}_2(\Z)$,
		\item[(e)] $\Gamma = \mathrm{PSL}_2(\Z)$.
	\end{itemize}
	
	\smallskip
	On the geometric side, projectivisation identifies $\PP\Quad(\sow)$ with the moduli space
	of the underlying marked sphere:
	\[
	\PP\Quad(\sow)\;\cong\;
	\begin{cases}
		M_{0,4},\\
		M_{0,[2]+2},\\
		M_{0,[3]+1},
	\end{cases}
	\]
	depending on the symmetry type. In each case,
	$\pi_1\Quad(\sow)$ fits into the central extension
	\[
	1 \longrightarrow \pi_1(\mathbb{C}^\star)\cong\Z \longrightarrow
	\pi_1\Quad(\sow) \longrightarrow \pi_1\PP\Quad(\sow) \longrightarrow 1,
	\]
	where the left map is induced by the $\mathbb{C}^\star$-rotation.
	
	\medskip\noindent
	\emph{Uniform reduction.}
	Since the two central $\Z$-subgroups are canonically identified under the natural map,
	it suffices to show that the induced map
	\[
	\bar\Phi:\;
	\Gamma
	=
	\Bigl(\pi_1\EG^\star(\sow)/\Rel^\star(\sow)\Bigr)\big/\Z
	\;\longrightarrow\;
	\pi_1\PP\Quad(\sow)
	\]
	is an isomorphism.
	
	\smallskip\noindent
	\emph{Infinite-order generators.}
	In all symmetry types, $\Gamma$ contains infinite-order generators represented by explicit sequences of flips in the exchange graph, obtained by repeatedly flipping a fixed edge until the initial mixed-angulation is recovered. Under the natural map to $\PP\Quad(\sow)$, such a flip cycle is sent to a braid-type loop in the corresponding moduli space of marked points. After fixing three marked points at $0,1,\infty$, this loop is given by the minimal motion in which one marked point winds once around another.
	
	If the two points are distinguishable (for instance, if they have different orders), this loop is a full twist and already closes in the labelled moduli space. If the two points are indistinguishable, the primitive loop is a half-twist exchanging them; it lifts to a path in the labelled cover whose endpoints differ by a relabelling, but it is closed in the orbifold quotient. In both cases, these braid-type loops represent infinite-order elements and generate the free part of $\pi_1\PP\Quad(\sow)$.

	\smallskip\noindent
	\emph{Finite-order generators.}
	Finite-order elements in $\Gamma$ arise in exactly two ways.
	
	\smallskip\noindent
	\textbf{(i) Orbifold stabilisers.}
	If a vertex of the exchange graph has a nontrivial stabiliser under the relabelling
	action, Proposition~\ref{prop:orbpi1-graph} introduces an additional torsion generator.
	This element is represented by a path which lifts to a path between distinct sheets of the
	labelled covering and closes only after applying the stabiliser. Its image in
	$\PP\Quad(\sow)$ is a loop around the corresponding orbifold point and therefore has the
	same finite order.
	
	\smallskip\noindent
	\textbf{(ii) Self-loops becoming torsion modulo $\Z$.}
	Even when all stabilisers are trivial, torsion may appear after quotienting by the
	central $\Z$. If a self-loop $\ell$ satisfies $\ell^m=z$, where $z$ is the central
	$2\pi$-rotation, then $\ell$ becomes an element of order $m$ in $\Gamma$. Geometrically,
	these elements correspond to cyclic permutations of indistinguishable singularities, such
	as transpositions of two points or cyclic permutations of three points, and map to the
	standard orbifold generators of $\pi_1\PP\Quad(\sow)$.
	
	\smallskip
	\emph{Example.}
	In the case of two poles of identical order and two zeroes, the infinite-order generator
	is represented by the cycle obtained by repeatedly flipping the vertical edge starting at
	the top mixed-angulation in Figure~\ref{fig:EGijkk}. Under the induced map, this cycle
	corresponds to a loop in $M_{0,[2]+2}$ in which one labelled point travels once around one
	of the unlabelled points, yielding the standard $\Z$-generator. The order~$2$ generator
	arises either from a nontrivial stabiliser or from a self-loop whose square is the central
	$2\pi$-rotation, depending on the parity of the zero orders. In both cases, its image is
	the canonical $\Z_2$-generator of $\pi_1(M_{0,[2]+2})$.
	
	\smallskip
	All remaining symmetry types are treated analogously. In each case, the generators on the
	exchange-graph side map to the standard geometric generators of
	$\pi_1\PP\Quad(\sow)$. Hence $\bar\Phi$ is an isomorphism in all cases, and since the
	central $\Z$ is identified on both sides, the original map
	\[
	\pi_1\EG^\star(\sow)/\Rel^\star(\sow)\longrightarrow \pi_1\Quad(\sow)
	\]
	is an isomorphism.
	
\end{proof}